\pgfplotsset{compat=1.15}
\renewcommand*{\geq}{\geqslant}
\renewcommand*{\leq}{\leqslant}
\theoremstyle{plain}
\newtheorem{theorem}{Theorem}[section]
\newtheorem{corollary}[theorem]{Corollary}
\newtheorem{lemma}[theorem]{Lemma}
\newtheorem{claim}[theorem]{Claim}
{\itshape}{\rmfamily}
{\itshape}{\rmfamily}
\newtheorem{claimnumb}{Claim}[theorem]{\itshape}{\rmfamily}
\theoremstyle{definition}
\newtheorem{remark}[theorem]{Remark}
\newtheorem{conjecture}[theorem]{Conjecture}%
\newcommand{\cal}[1]{\mathcal{#1}}
\newcommand{\R}{\mathbb{R}}
\newcommand{\Q}{\mathbb{Q}}
\newcommand{\Z}{\mathbb{Z}}
\newcommand{\F}{\mathbb{F}}
\DeclareMathOperator{\conv}{conv}
\DeclareMathOperator{\supp}{supp}
\DeclareMathOperator{\cone}{cone}
\DeclareMathOperator{\eqdet}{eqdet}
\DeclareMathOperator{\gcddet}{gcddet}
\renewcommand{\cal}[1]{\mathcal{#1}}
\newcommand{\trim}{/\!\!/}
\DeclareMathOperator{\diag}{diag}
\DeclareMathOperator{\nega}{neg}
\title{Totally equimodular matrices: decomposition and triangulation}
\author{Patrick Chervet}
\address{Lyc\'ee Olympe de Gouges, Rue de Montreuil \`a Claye, {Noisy le Sec}, {93130 France}}
\email{patrick.chervet@ac-creteil.com}
\author{Roland Grappe}
\address{{LAMSADE (CNRS UMR7243)}, {Universit\'e Paris-Dauphine, PSL},
	Place du Maréchal de Lattre de Tassigny, 75775 PARIS Cedex 16 
	{France}}
\email{roland.grappe@lamsade.dauphine.fr}
\author{Mathieu Vall\'ee}
\address{Université Sorbonne Paris Nord, LIPN, CNRS UMR 7030, F-93430, Villetaneuse, France}
\email{vallee@lipn.univ-paris13.fr}
\date{\today}
\subjclass[2020]{90C27, 
	05B20, 
	90C10 
}
\keywords{Hilbert basis, Polyhedral cone, Totally equimodular matrices, Totally unimodular matrices, Triangulation.}
\begin{document}
\begin{abstract}
	Totally equimodular matrices generalize totally unimodular matrices and arise in the context of box-total dual integral polyhedra. 
	This work further explores the parallels between these two classes and introduces foundational building blocks for constructing totally equimodular matrices. 
	Consequently, we present a decomposition theorem for totally equimodular matrices of full row rank.
	
	Building on this decomposition theorem, we prove that simplicial cones whose generators form the rows of a totally equimodular matrix sa\-tisfy strong integrality decomposition properties.		
	More precisely, we provide the Hilbert basis for these cones and construct regular unimodular Hilbert triangulations in most cases. 
	We conjecture that cases not covered here do not exist.
\end{abstract}
	{\let\newpage\relax\maketitle}
	
\section{Introduction}

\subsection{Box-total dual integrality and matrices}

A rational system of linear inequalities $Ax\leq b$ is \emph{totally dual integral} (\emph{TDI}) if the minimization problem in the linear programming duality:
\begin{equation*}
	\max\{c^\top x\colon Ax\leq b\}=\min\{b^\top y\colon A^\top y = c, y\geq\mathbf{0}\}
\end{equation*}
admits an integer optimal solution for each integer vector $c$ such that the maximum is finite.

A system $Ax \leq b$ is \emph{box-totally dual integral}~\cite{Edmonds_Giles_1977} (\emph{box-TDI}) if $Ax\leq b$, $\ell\leq x\leq u$ is TDI for all rational vectors $\ell$ and $u$ (with possible infinite entries).
TDI and box-TDI systems were introduced in the late 1970's and serve as a general framework for establishing various min-max relations in combinatorial optimization~\cite{Schrijver_2003}.
General properties of such systems can be found in~\cite{Cook_1986},~\cite[Chap.~5.20]{Schrijver_2003} and~\cite[Chap.~22.4]{Schrijver_1999}.
Two famous examples of box-TDI systems are the systems behind the MaxFlow-MinCut theorem of Ford and Fulkerson~\cite{Ford_Fulkerson_1956} and K\H onig's theorem about matchings in bipartite graphs~\cite{Konig_1931}.

More precisely, the box-TDIness of the two latter systems comes from the total unimodularity of the underlying matrix.
A matrix is {\em totally unimodular} when all its subdeterminants are $0,\!\pm1$.
Totally unimodular matrices can be characterized in terms of box-TDI systems as follows.

\begin{theorem}[Hoffman and Kruskal~\cite{Hoffman_Kruskal_1956}]\label{thm:Hoffman_Kruskal_1956}
	A matrix $A$ of $\mathbb{Z}^{m\times n}$ is totally unimodular if and only if the system $Ax\leq b$ is box-TDI for all $b\in\mathbb{Z}^m$.
\end{theorem}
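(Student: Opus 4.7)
The plan is to prove the two directions separately, reducing each one to classical results on totally unimodular matrices: for the forward direction, Cramer's rule applied to a totally unimodular dual constraint matrix; for the backward direction, the original Hoffman-Kruskal integrality characterization together with the Edmonds-Giles theorem linking TDIness to primal integrality.

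For the forward direction, I would begin with the standard closure property of total unimodularity: if $A$ is totally unimodular then so is the augmented matrix $\widetilde{A} = \begin{pmatrix} A \\ -I \\ I \end{pmatrix}$, because appending signed identity blocks preserves total unimodularity. Fix $b \in \mathbb{Z}^m$, rational vectors $\ell, u$ (possibly with infinite entries), and an integer cost $c$ such that the maximum of $c^\top x$ over $\{x \colon Ax \leq b, \ \ell \leq x \leq u\}$ is finite. The dual of this LP has constraint matrix $\widetilde{A}^\top$, restricted to columns corresponding to the finite bounds, and its right-hand side is $c$. Since $\widetilde{A}^\top$ is totally unimodular and $c$ is integer, Cramer's rule guarantees that every basic feasible dual solution is integer. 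In particular, an optimum basic solution is integer, which proves TDIness of the augmented system. As this holds for every rational $\ell, u$, the system $Ax \leq b$ is box-TDI.

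For the backward direction, I would observe that box-TDIness of $Ax \leq b$ for every integer $b$ implies, by specialising to $\ell = \mathbf{0}$ and $u = +\infty$, the TDIness of $Ax \leq b, \ x \geq \mathbf{0}$. By the theorem of Edmonds and Giles, TDIness combined with integrality of the right-hand side forces the primal polyhedron $\{x \colon Ax \leq b, \ x \geq \mathbf{0}\}$ to be an integer polyhedron. Since this conclusion holds for every $b \in \mathbb{Z}^m$, the classical Hoffman-Kruskal integrality theorem yields total unimodularity of $A$.

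The main delicate point, if any, lies in handling bounds $\ell, u$ with infinite entries in the forward direction: such entries correspond to absent primal constraints and therefore to absent dual variables, so one has to be careful to write the dual LP using only the finite bounds before invoking Cramer's rule. Once this bookkeeping is done, both directions become short applications of classical material, and no new technical ingredient beyond Cramer's rule, Edmonds-Giles, and Hoffman-Kruskal is needed.
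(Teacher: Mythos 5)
The paper states this theorem as a cited classical result and gives no proof, so there is no in-paper argument to compare yours against; I will therefore just assess your proposal on its own terms. Your argument is correct and is the standard textbook route. For the forward direction, the two ingredients you invoke are exactly the right ones: appending $\pm I$ blocks preserves total unimodularity, and a totally unimodular dual constraint matrix with integer right-hand side $c$ forces all basic feasible dual solutions (hence an optimal one, since the dual polyhedron lies in the nonnegative orthant and is therefore pointed) to be integral. Your remark about infinite bounds is the correct bookkeeping: dropping the dual variables associated with absent constraints leaves a submatrix of $\begin{pmatrix} A^\top & -I & I \end{pmatrix}$, which is still totally unimodular, so Cramer's rule applies unchanged. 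For the backward direction, specializing box-TDIness to $\ell = \mathbf{0}$, $u = +\infty$ to obtain TDIness of $Ax \leq b,\ x \geq \mathbf{0}$, invoking Edmonds--Giles to conclude integrality of $\{x : Ax \leq b,\ x \geq \mathbf{0}\}$ for every integer $b$, and then applying the original polyhedral-integrality form of Hoffman--Kruskal is the standard reduction; there is no circularity, since that polyhedral form is a distinct and logically prior statement from the box-TDI reformulation you are proving.
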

Then, the total unimodularity of incidence matrices of directed graphs and that of bipartite graphs yields the two aforementioned theorems.
\medskip

Although every polyhedron can be described by a TDI system~\cite[{Theorem~22.6}]{Schrijver_1999}, not every polyhedron can be described by a box-TDI system.
{\em Box-TDI polyhedra}~\cite{Cook_1986} are those that can be described by a box-TDI system.

In the context of box-TDI polyhedra, a generalization of totally unimodular matrices arises naturally: totally equimodular matrices~\cite{Chervet_Grappe_Robert_2021}.
An equivalent definition of total unimodularity is to ask for every set of linearly independent rows to be unimodular, where an $m\times n$~matrix is {\em unimodular} if it has full row rank and all its nonzero $m\times m$ determinants are $\pm 1$.
More generally, an $m\times n$~matrix is {\em equimodular} if it has full row rank and all its nonzero $m\times m$ determinants have the same absolute value.
Then, {\em totally equimodular} matrices are those for which every set of linearly independent rows forms an equimodular matrix.

It turns out that totally equimodular matrices fulfill the same role for box-TDI polyhedra as totally unimodular matrices do for box-TDI systems.

\begin{theorem}[{Chervet, Grappe, and Robert~\cite[Corollary~8]{Chervet_Grappe_Robert_2021}}]\label{TEMPBI}
	A matrix $A$ of $\mathbb{Q}^{m\times n}$ is totally equimodular if and only if the polyhedron $\{x:Ax\leq b\}$ is box-TDI for all $b\in\mathbb{Q}^m$.
\end{theorem}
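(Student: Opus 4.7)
My plan rests on a face-by-face characterization of box-TDIness, which translates the condition that $\{x : Ax \leq b\}$ be box-TDI for every $b$ into an algebraic condition on submatrices of $A$ indexed by the tight rows at each face. Totally equimodular matrices are engineered to satisfy this condition uniformly in $b$; the converse direction requires recovering equimodularity of every linearly independent row subfamily from the integrality of dual optima.

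For the direction $A$ totally equimodular implies box-TDI for every $b$, I would fix an arbitrary $b \in \mathbb{Q}^m$, consider $P = \{x : Ax \leq b\}$, and verify box-TDIness face by face. At a minimal face $F$, the rows of $A$ tight on $F$ restrict, on a maximal linearly independent subfamily indexed by $I$, to an equimodular matrix $A_I$ with common nonzero maximal-minor absolute value $\delta$. A rational row-scaling would transform the tight equations into an equivalent system whose matrix is totally unimodular, so that Hoffman--Kruskal (Theorem~\ref{thm:Hoffman_Kruskal_1956}) supplies an integer dual optimum for every integer cost vector and every choice of box bounds. Patching this local box-TDIness across all faces of $P$ and all $b$ via the standard face-by-face reduction then yields box-TDIness of $P$ globally.

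For the converse, I would pick any linearly independent row subset $A_I$ of $A$, with $|I|=r$, and design $b$, a cost $c$, and box bounds $\ell \leq x \leq u$ such that a vertex of the resulting polyhedron is cut out by a chosen nonsingular $r \times r$ subsystem of $A_I$. Box-TDIness then furnishes an integer dual at this vertex; Cramer's rule relates its components to ratios of $r \times r$ minors of $A_I$, and varying the column selection across all nonzero such minors forces them to share a common absolute value, yielding equimodularity of $A_I$.

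The main obstacle will be this converse: box-TDIness only guarantees \emph{existence} of an integer dual optimum, not uniqueness, so direct Cramer comparisons can fail when alternative integer optima coexist. The delicate step is to tune $b$, $c$, $\ell$, and $u$ (plausibly via a perturbation or genericity argument on $b$) so that the dual basis is effectively pinned down and the resulting integrality constraint propagates uniformly across every admissible column selection of $A_I$, rather than being diluted by the freedom in choosing the integer dual.
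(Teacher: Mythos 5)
The paper does not prove this theorem; it is quoted from Chervet, Grappe and Robert~\cite[Corollary~8]{Chervet_Grappe_Robert_2021}, and the present paper only uses it (for instance, via~\cite[Theorem~2]{Chervet_Grappe_Robert_2021}, which is the face-defining-matrix characterization of box-TDI polyhedra invoked in the proof of Theorem~\ref{theorem:invert_TE}). So there is no ``paper's proof'' to compare against; I assess the proposal on its own.

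There is a genuine error in your forward direction. You assert that a rational \emph{row-scaling} transforms an equimodular matrix of tight rows into a totally unimodular one, after which Hoffman--Kruskal finishes the job. This is false. Take $A_I = \begin{bmatrix}1 & 1\\ 1 & -1\end{bmatrix}$, which is equimodular (its unique $2\times 2$ minor is $-2$). Scaling the rows by $s_1,s_2$ multiplies the $2\times 2$ determinant by $s_1s_2$ and leaves the entries as $\pm s_i$; for a totally unimodular result you would need $s_1s_2 = 1/2$ and $s_i \in \{0,\pm 1\}$ simultaneously, which is impossible. Equimodularity is \emph{not} the same as ``row-scalable to TU.'' What is true (and what Chervet--Grappe--Robert actually exploit) is a different lattice condition: a full row rank matrix is equimodular iff for any nonsingular maximal square submatrix $B$, the matrix $B^{-1}A_I$ is integer; box-TDIness of a polyhedron is then characterized by requiring every face-defining matrix to be equimodular (\cite[Theorem~2]{Chervet_Grappe_Robert_2021}). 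This is a condition on the primal geometry of each face, not a reduction to a TU subsystem plus Hoffman--Kruskal; trying to patch local TU reformulations across faces will not reproduce the dual variables of the original system $Ax \leq b$, which is what box-TDI constrains.

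Your converse is not a proof; it is an identification of the difficulty. You correctly observe that box-TDIness gives only \emph{existence} of an integer dual and that Cramer-ratio comparisons can be washed out by alternative optima, and then you ask for a perturbation or genericity argument to pin the dual basis down, without supplying it. That is precisely the nonobvious content of the converse. Without specifying the construction of $b$, $c$, $\ell$, $u$ (and how to rule out degenerate integer duals), the converse is a plan, not an argument.

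In summary: the forward direction rests on a false lemma (row-scaling to TU), and the converse is left as an acknowledged gap. The correct route goes through the equimodular face-defining characterization of box-TDI polyhedra rather than through Hoffman--Kruskal.
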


The question of their recognition is raised in~\cite[Open problem~1]{Chervet_Grappe_Robert_2021}, and~\cite[Open problem~3.24]{Grappe_2021} asks whether there exists a decomposition theorem for them. 

\paragraph{\textbf{Contributions.}} We start by further investigations on the connections between totally unimodular and totally equimodular matrices, and provide two new parallels: one in terms of pivots and trims, and another involving the transpose and the inverse.
Then, we give our first main result: a decomposition theorem of totally equimodular matrices of full row rank.
As a consequence, this provides a unified framework in which well-known classes of matrices appear intertwined, such as minimally non-totally unimodular, complement totally unimodular, and complement minimally non-totally unimodular matrices.
Incidentally, linear systems associated with totally equimodular matrices are totally dual dyadic.

\subsection{Hilbert triangulation of cones}
Integer decomposition properties of cones and polytopes appear in various fields such as combinatorial optimization, toric geometry, and combinatorial commutative algebra.

By Carath\'eodory's theorem~\cite{Caratheodory1911}, each point of a pointed (rational polyhedral) cone $C$ is the nonnegative combination of at most $\dim (C)$ generators of $C$.
In combinatorial optimization, a preferable property is the so-called integer Carath\'eodory property.
A \emph{Hilbert basis} of $C$ is a minimal set of integer vectors of $C$ whose nonnegative integer combinations generate all the integer points of $C$.
A cone $C$ has the \emph{integer Carath\'eodory property} (\emph{ICP}) when each of its integer points can be expressed as a nonnegative integer combination of at most $\dim(C)$ Hilbert basis elements.
This property coincides with the integer Carath\'eodory property for polytopes~\cite{Gijswijt_Regts_2012} in the following sense: a polytope $P$ has the integer Carath\'eodory property if and only if the Hilbert basis of the cone $C=\cone(P\times\{1\})$ consists of the lattice points in $P\times\{1\}$ and $C$ has the integer Carath\'eodory property.

Hilbert bases play a fundamental role in combinatorial optimization, as they underlie the notion of TDI systems~\cite[{Theorem~22.5}]{Schrijver_1999}.
The integer Carath\'eodory property then ensures that an optimal solution of the dual problem is sparse~\cite{Cook_Fonlupt_Schrijver_1986}.
Unfortunately, finding a Hilbert basis of a cone is hard in general~\cite{Pap2011}.
\medskip

Over the years, a hierarchy of increasingly stronger integer decomposition properties has emerged: having a \emph{unimodular Hilbert cover} (\emph{UHC}), a \emph{unimodular Hilbert triangulation} (\emph{UHT}), and a \emph{regular unimodular Hilbert triangulation} (\emph{RUHT}).
We have $\text{RUHT } \Longrightarrow\text{ UHT }\Longrightarrow\text{ UHC }\Longrightarrow \text{ ICP}$.
Backgrounds about these properties can be found in~\cite{Firla_Ziegler_1999,Bruns_Gubeladze_Henk_Martin_Weismantel_1999,Bruns_Gubeladze_2002,Bruns_2007,Gijswijt_Regts_2012}, as well as the fact they strictly imply one another.
Like the integer Carath\'eodory property, all these properties have polyhedral counterparts~\cite[Section~1.2.5]{Haase_Paffenholz_Piechnik_Santos_2021}.

Notably, Seb\H o~\cite{Sebo_1990} proved that cones of dimension at most three have a unimodular Hilbert triangulation.
Matroid base polytopes satisfy the integer Carath\'eodory property~\cite{Gijswijt_Regts_2012}, and it was recently shown that they admit a regular unimodular Hilbert triangulation~\cite{Backman_Liu_2024}, although this triangulation is nonexplicit.
There exists a nonsimplicial cone for which no unimodular Hilbert triangulation is regular~\cite{Firla_Ziegler_1999}.
If a simplicial cone~\cite{Kuhlmann_2024} or its dual~\cite{Aliev_Henk_Hogan_Kuhlmann_Oertel_2024} has small determinants, then it has the integer Carath\'eodory property.
Incidentally, Kuhlmann asks~\cite{Kuhlmann_2024}: \textit{Does every simplicial cone have the integer Carath\'eodory property?}
A related notion studied in~\cite{Aliev_Henk_Hogan_Kuhlmann_Oertel_2024} and~\cite{Eisenbrand2006} is the \emph{integer Carath\'eodory rank}, which is the maximum number of Hilbert basis elements needed to generate the integer points of the cone.
\paragraph{\textbf{Contributions.}} 
We call {\em te-cones} the simplicial cones whose generators form the rows of a totally equimodular matrix.
As a consequence of the decomposition theorem mentioned in the previous section, we first derive the Hilbert basis of te-cones. Building on this, we construct an explicit regular unimodular Hilbert triangulation for most te-cones, with a well-described combinatorial structure. Supported by computational experiments, we conjecture that the untreated cases do not exist. In contrast to previous results on simplicial cones, our findings include simplicial cones of any dimension, with determinants that can grow exponentially with the dimension. For instance, the following new result is a special case of our main triangulation theorem:
\begin{corollary}\label{corollary:0/1_simplicial_box-integer_cones}
	Simplicial box-totally dual integral cones in the nonnegative orthant have the integer Carath\'eodory property.
\end{corollary}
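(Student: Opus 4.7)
The plan is to reduce the corollary to the main triangulation theorem of the paper, using the chain of implications $\text{RUHT}\Rightarrow\text{UHT}\Rightarrow\text{UHC}\Rightarrow\text{ICP}$ recalled in the introduction. The key step is to show that the hypotheses force the cone to be a te-cone whose generator matrix is nonnegative, which is a case handled by the triangulation theorem.

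Let $C\subseteq\R^n_+$ be a simplicial box-TDI cone, and let $V$ be the $n\times n$ matrix whose rows are primitive generators of $C$, so that $V\geq\mathbf{0}$. Because $C$ is simplicial, its irredundant inequality description is unique up to positive rescaling of individual rows, and has the form $\{x : Ax\leq\mathbf{0}\}$ with $A=-V^{-\top}$ (up to such scaling). Since $C$ is box-TDI, Theorem~\ref{TEMPBI} allows us to choose a totally equimodular matrix describing $C$; by uniqueness up to row rescaling, and since total equimodularity is preserved under positive row rescaling (all $n\times n$ minors are multiplied by the same nonzero scalar), the matrix $A$ itself can be taken to be totally equimodular. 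I would then invoke the parallel between the transpose and the inverse for totally equimodular matrices announced in the contributions: a nonsingular square totally equimodular matrix has a totally equimodular transpose inverse. Applying this to $A$ gives that $V=-A^{-\top}$ (up to row rescaling) is totally equimodular, so $C$ is a te-cone with nonnegative generators.

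It then remains to apply the main triangulation theorem of the paper to obtain a regular unimodular Hilbert triangulation of $C$, whence $C$ has the integer Carath\'eodory property. The main obstacle is the passage from \emph{$C$ is box-TDI} to \emph{the facet matrix $A$ is totally equimodular}: it hinges on the essential uniqueness of the irredundant inequality description of a simplicial cone, together with the scaling-invariance of total equimodularity. A secondary point to check is that nonnegative generators place $C$ genuinely inside the covered regime of the triangulation theorem rather than in its (conjecturally empty) exceptional cases.
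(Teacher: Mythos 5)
Your overall plan mirrors the paper's proof, but it has one genuine gap and one technical slip.

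The genuine gap: the point you call ``secondary'' is in fact the crucial final step. Theorem~\ref{theorem:triangulation_te-sets} is stated under the hypothesis that $A$ has no thick te-interlace of size greater than six, so you must verify this hypothesis before invoking it. The verification is short once you observe that the generator matrix is $0,\!1$: its rows are primitive integer vectors whose nonzero entries all have the same absolute value (totally equimodular matrices are essentially $0,\!\pm1$), and nonnegativity forces that value to be $1$. Any two rows of a te-interlace have the same support and form a te-lace, hence are distinct, which is impossible for $0,\!1$ vectors with the same support; so a $0,\!1$ te-set contains no te-interlaces at all, and Theorem~\ref{theorem:triangulation_te-sets} applies. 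Without this observation the reduction is incomplete, not merely ``a secondary point to check''.

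The technical slip: you cannot invoke Theorem~\ref{TEMPBI} to conclude that the facet matrix $A$ is totally equimodular from the box-TDIness of the single polyhedron $C=\{x : Ax\leq \mathbf{0}\}$, since Theorem~\ref{TEMPBI} needs box-TDIness of $\{x : Ax\leq b\}$ for \emph{every} right-hand side $b$. The correct reference is Theorem~2 of Chervet--Grappe--Robert, as used inside the proof of Theorem~\ref{theorem:invert_TE}: for invertible $A$, every subset of its rows is face-defining for $C$, so $A$ is totally equimodular if and only if $C$ is box-TDI. That proof already contains the fact the corollary needs --- a simplicial cone is box-TDI if and only if it is generated by a te-set --- which is how the paper avoids your detour through $A$ and $(A^{-1})^\top$, though once the starting point is fixed your detour does reach the same conclusion via Theorem~\ref{theorem:invert_TE}.
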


\subsection{Outline.}
The remainder of the paper is organized in two parts.
The first part (Section~\ref{section:def} to~\ref{section:main_cones}) contains the main results with only sketches of proofs, while the second block (Sections~\ref{section:proofs_decomposition} and~\ref{section:proofs_cones}) contains the detailed proofs.

Section~\ref{section:def} introduces the definitions and notations used throughout. 
In Section~\ref{section:decomposition}, we first establish two new parallels between totally unimodular and totally equimodular matrices.
We then present our decomposition theorem for totally equimodular matrices of full row rank.
We also discuss connections with known matrix classes and propose a conjecture to refine the decomposition. 
Section~\ref{section:main_cones} is divided in two.
First, we provide the Hilbert basis of all te-cones.
Then, we construct a regular unimodular Hilbert triangulation for most te-cones.

The second part starts with Section~\ref{section:proofs_decomposition}, which is devoted to the proof of the decomposition theorem.
In Section~\ref{section:proofs_cones}, we prove that the aforementioned triangulation is indeed regular, unimodular, and Hilbert.

\section{Definitions and notation}\label{section:def}

\subsection{Sets of vectors and matrices.}

Throughout, all the entries will be rational.
Moreover, we will identify a set of vectors $A$ with the matrix, also denoted by $A$, whose rows are those vectors.
All the considered sets will be linearly independent, and equivalently the matrices will have full row rank.
A linearly independent set $A\subseteq \R^n$ is \emph{full-dimensional} when it spans $\R^n$, or equivalently when its associated matrix is square and invertible.	
A subset $X$ of $A$ is \emph{proper} when $\emptyset\subsetneq  X\subsetneq  A$.
The \emph{disjoint union} of two disjoint sets $A$ and $B$ is denoted by $A\sqcup B$.

We will use the following notations.
Let $A$ be an $m\times n$~matrix.
The $i$-th row of $A$ is denoted by $A_i$, its $j$-th column by $A^j$, and its coefficient $(i,j)$ by $A_i^j$.
The vectors $\mathbf{0}$ and $\mathbf{1}$ are the vectors with all entries being $0$ and $1$ of appropriate size, respectively.
The matrix $\mathbf{J}$ is the all-ones matrix.
The \emph{support} $\supp (x)$ of a vector $x$ is the set of its nonzero coordinates.

A {\em pivot}\footnote{Note that this is the definition of the classical Gauss-pivot, which differs from the pivot used by Seymour~\cite{Seymour1980} in the decomposition theorem of totally unimodular matrices.} will be a position $p=(i,j)$ at row index $i$ and column index $j$ of $A$ whose coefficient $A_i^j$ is not zero.
Then, {\em $p$-pivoting $A$}, or {\em pivoting $A$ with respect to $p$}, means dividing $A_i$ by $A_i^j$, and then adding an appropriate scalar multiplication of this new row to the other rows of $A$, so that afterwards all the coefficients in $A^j$ are zero, except that at row $i$ which is now $1$.
The resulting matrix is called the {\em $p$-pivot} of $A$ and will be denoted by $A/p$.

The matrix {\em trimmed} from $A$ with respect to $p=(i,j)$ is the matrix obtained from $A/p$ by deleting row $i$ and column $j$ and will be denoted by $A\trim p$, the {\em $p$-trim} of $A$.

The \emph{equideterminant} of an $m\times n$ equimodular matrix $A$ is the absolute value of any nonzero $m\times m$ determinant of $A$, and is denoted $\eqdet(A)$.
Then, \emph{rescaling} a totally equimodular matrix $A$ means dividing each row $A_i$ by $\eqdet(A_i)$, and yields a $0,\!\pm1$ matrix.
Multiplying some rows or columns by $-1$ is called \emph{resigning}.
Notice that resigning preserves total unimodularity and total equimodularity.


\subsection{Cones, Hilbert basis, and triangulations.}\label{section:cones}

A (finitely generated) \emph{cone} $C$ is the set of all nonnegative linear combinations of a finite collection of vectors $A$:
$$C=\cone(A) = \left\{\sum_{a\in A} \lambda_a a\colon \lambda_a\geq 0\right\}.$$
Let $C\subseteq\R^n$ be a cone and $A$ its set of \emph{generators}, that is, $A$ is inclusionwise minimal such that $C=\cone(A)$.
Throughout, we suppose all cones to be \emph{simplicial}, that is, their generators are linearly independent.
Thus, they are \emph{pointed}, that is, they contain no lines.
The \emph{dimension} $ \dim(C) $ of $C$ is the dimension of the linear subspace it spans. The cone $ C $ is \emph{full-dimensional} if $ \dim(C) = n $.
A \emph{face} of a cone $ C $ is the intersection of $ C $ with a hyperplane such that one of the half-spaces defined by the hyperplane contains $C$.
As a result, any face of $C$ is a cone generated by a subset of the generators of $C$.
The \emph{Minkowski sum} of two cones $\cone(A)$ and $\cone(B)$ is $\cone(A)+\cone(B)=\cone(A\cup B)$.



An integer vector $x\in\Z^n$ is called \emph{primitive} if the greatest common divisor of all its coefficients is $1$.
We will always assume the generators to be primitive.
An integer vector $h$ of $C$ is a \emph{Hilbert basis element} if we cannot express it as the sum of two nonzero integer vectors of $C$.
A \emph{Hilbert basis} $\cal{H}\subseteq C\cap\Z^n$ is a finite set of Hilbert basis elements generating all the integer points of the cone with nonnegative integer coefficients.
Here, since the cones are pointed, the Hilbert basis $\cal{H}(C)$ of a cone $C$ is unique~\cite{Schrijver1981}.
The Hilbert basis of $C$ contains its generators, the other Hilbert basis elements are called \emph{nontrivial}.
A simplicial cone $C=\cone (A)$ is \emph{unimodular} if the gcd of the maximum size determinants of $A$ is $1$.
In this case, we have $\cal H(C)=A$.

A \emph{triangulation} of a cone $C$ is a collection of cones whose union is $C$, with the property that the intersection of any two cones in the collection is a face of each.
A triangulation of $C$ is called \emph{Hilbert} if the generators of each cone in the triangulation are Hilbert basis elements of $C$.
It is \emph{unimodular} if all its cones are unimodular, and \emph{regular} if each cone corresponds to the domain of regularity of a convex piecewise linear function on $C$. We refer the reader to~\cite[Chap.~5]{Ziegler_1995} and~\cite[Chap.~8]{Sturmfels_1996} for more details.

\section{Decomposition of totally equimodular matrices}\label{section:decomposition}

\subsection{Parallels between total equimodularity and total unimodularity}\label{subsection:parallel_TE_TU}
In the introduction, we mentioned a parallel between totally equimodular and totally unimodular matrices in terms of box-TDIness, which is how totally equimodular matrices appeared.
Another connexion exists in terms of undirected graphs: the edge-vertex incidence matrix of a graph is always totally equimodular~\cite{Chervet_Grappe_Lacroix_Pisanu_Wolfler-Calvo_2023}, and it is totally unimodular if and only if the graph is bipartite~\cite{hoffmanKruskal:1956}.

We report two new parallels between these two classes: the first one concerns pivots and trims, and the second one involves the transpose and the inverse.

\medskip

Since values that appear when pivoting and trimming are values of subdeterminants of the original matrix, totally unimodular matrices can be characterized as follows.

\begin{theorem}[Folklore]\label{pivotTU}
	A matrix is totally unimodular if and only if any sequence of pivots and trims yields a $0,\!\pm1$ matrix.
\end{theorem}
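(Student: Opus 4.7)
The plan is to prove the equivalence in two directions, both resting on a single algebraic identity. I would first establish a Schur-complement formula for pivots: if $p=(i,j)$ is a pivot of $A$, if $I$ is a subset of row indices not containing $i$, if $J$ is a subset of column indices not containing $j$, and if $|I|=|J|$, then the submatrix $\tilde B$ of $A$ on rows $I\cup\{i\}$ and columns $J\cup\{j\}$ satisfies
$$\det(\tilde B)\;=\;A_i^j\cdot\det(B),$$
where $B$ is the submatrix of $A\trim p$ on rows $I$ and columns $J$. This identity follows by partitioning $\tilde B$ as a block matrix with the scalar $A_i^j$ in the top-left corner and expanding via the standard Schur-complement determinant formula; the resulting Schur complement is precisely the corresponding submatrix of $A\trim p$ by definition of trim.

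For the forward direction, assume $A$ is totally unimodular. Then $|A_i^j|=1$ for any pivot $p$, so the identity shows that every subdeterminant of $A\trim p$ is, up to sign, a subdeterminant of $A$, hence lies in $\{0,\pm1\}$; in particular $A\trim p$ is itself totally unimodular. For the pivot $A/p$, I would observe that it is obtained from $A\trim p$ by reinserting a column equal to the $i$-th unit vector and a row equal to $A_i/A_i^j$, so any subdeterminant of $A/p$ is, up to sign, either a subdeterminant of $A\trim p$ or such a subdeterminant multiplied by an entry $A_i^l/A_i^j\in\{0,\pm1\}$. An immediate induction on the length of the sequence then preserves total unimodularity at every step, and in particular keeps all entries in $\{0,\pm1\}$.

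For the converse, suppose every sequence of pivots and trims applied to $A$ yields a $0,\pm1$ matrix. Taking the empty sequence shows that $A$ itself has $0,\pm1$ entries. To prove total unimodularity, I would take an arbitrary $k\times k$ nonsingular submatrix $B$ of $A$ and proceed by induction on $k$. Since $B$ is nonzero, there is a pivot position $p_1=(i_1,j_1)$ whose row and column both index $B$; the Schur-complement identity yields $\det(B)=A_{i_1}^{j_1}\cdot\det(B^{(1)})$, where $B^{(1)}$ is the corresponding $(k-1)\times(k-1)$ submatrix of $A\trim p_1$. Both factors are nonzero, so $B^{(1)}$ is a nonsingular submatrix of the pivot-and-trim $A\trim p_1$, to which the induction hypothesis applies. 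Iterating $k$ times exhibits $\det(B)$ as a product of $k$ nonzero entries, each appearing in some successive pivot-and-trim of $A$; by hypothesis each such entry is $\pm1$, so $\det(B)=\pm1$ and $A$ is totally unimodular.

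The main obstacle is purely a bookkeeping one: writing down the Schur-complement identity with the right indexing conventions and verifying that a Gauss pivot is consistent with the trim operation defined in the paper. Once this identity is pinned down, both directions of the equivalence become short inductions—on the length of the sequence for the forward direction, and on the size of the subdeterminant for the converse.
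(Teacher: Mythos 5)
The paper does not actually supply a proof of this statement: it labels it folklore and justifies it only with the one-line remark that the values appearing after pivoting and trimming are values of subdeterminants of the original matrix. Your proof is a correct and welcome formalization of exactly that remark, resting on the Schur-complement identity $\det(\tilde B)=A_i^j\det(B)$; both directions of the argument are sound, and this is the intended route. The only imprecision is your intermediate claim that every subdeterminant of $A/p$ is, up to sign, either a subdeterminant of $A\trim p$ or such a subdeterminant times a single entry $A_i^l/A_i^j$. That is false for a submatrix $(A/p)_I^J$ with $i\in I$ but $j\notin J$: there the cofactor expansion along row $i$ gives a \emph{sum} of such products, not a single one. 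The conclusion you need is nonetheless immediate by a different observation: since the pivot operation consists of dividing row $i$ by $A_i^j$ and then adding multiples of the (new) row $i$ to the other rows, for any $I\ni i$ these are elementary row operations \emph{inside} the submatrix $(A/p)_I^J$, whence $\det\bigl((A/p)_I^J\bigr)=\det(A_I^J)/A_i^j$, which lies in $\{0,\pm1\}$ because $A$ is totally unimodular and $|A_i^j|=1$. With that local repair the forward direction is complete; the converse is fine as written, since at each step the nonzero entry you pivot on lies in a matrix that is, by hypothesis, $0,\pm1$, so $\det(B)$ is exhibited as a product of $k$ signs.
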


It turns out that this extends to totally equimodular matrices as follows.
A matrix is {\em essentially $0,\!\pm1$} if, in each of its rows, the nonzero coefficients all have the same absolute value.
In other words, multiplying each row by an appropriate coefficient yields a $0,\!\pm1$ matrix.
Note that totally equimodular matrices are essentially $0,\!\pm1$.

%

\begin{theorem}[Theorem~\ref{pivotpreserveTE}]\footnote{Here and in the next section, such a reference points to the statement and its complete proof.}\label{theorem:pivot_TE}
	A matrix is totally equimodular if and only if any sequence of pivots and trims yields an essentially $0,\!\pm1$ matrix.
\end{theorem}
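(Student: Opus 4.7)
The plan is to prove both directions through the same core identity linking subdeterminants of $A$ to subdeterminants of $A/p$ and $A\trim p$. For a pivot position $p=(i,j)$ with $A_i^j\neq 0$,
\[
\det((A\trim p)[R,S])=\det(A[R\cup\{i\},S\cup\{j\}])/A_i^j
\]
for any row set $R\not\ni i$ and column set $S\not\ni j$ of the same size, with an analogous formula for $A/p$ (splitting into the cases $i\in R$ and $i\notin R$; the latter reduces to the identity above by expanding the determinant along column $j$ of $A/p$, which equals $\pm e_i$). This identity is the workhorse of both directions.

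For the forward direction, I would use the identity to show that pivots and trims preserve total equimodularity. A linearly independent row subset $R$ of $A\trim p$ or $A/p$ corresponds via the identity to a linearly independent subset $R$ or $R\cup\{i\}$ of $A$, and every nonzero maximal subdeterminant on $R$ in the transformed matrix is obtained from a nonzero maximal subdeterminant on $R\cup\{i\}$ (or $R$) of $A$ through the common scalar $1/|A_i^j|$. Hence equimodularity transfers. Once total equimodularity is preserved under pivots and trims, one observes that every single row of a totally equimodular matrix forms a $1\times n$ equimodular matrix, i.e., its nonzero entries share a common absolute value---this is precisely the essentially $0,\!\pm1$ condition. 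Therefore, any sequence of pivots and trims applied to $A$ yields a totally equimodular, and hence essentially $0,\!\pm1$, matrix.

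For the backward direction, fix a linearly independent row subset $R$ of size $k$; the goal is to show that all nonzero $k\times k$ subdeterminants of $A[R]$ have the same absolute value. I would proceed in two steps. \emph{Local step:} for any $(k-1)$-subset of columns $T$ with $\rank A[R,T]=k-1$, pick $i_0\in R$ such that $A[R\setminus\{i_0\},T]$ is nonsingular (such an $i_0$ exists by choosing any row whose coefficient in a nontrivial linear dependence among the rows of the $k\times(k-1)$ matrix $A[R,T]$ is nonzero). Performing $k-1$ successive pivot-trims at positions in $(R\setminus\{i_0\})\times T$ is valid, because the determinantal identity shows that the relevant Schur-complement submatrix stays nonsingular at each step. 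In the resulting smaller matrix, the entries of the surviving row $i_0$ equal $\det(A[R,T\cup\{j\}])/\det(A[R\setminus\{i_0\},T])$. By the hypothesis these entries are either $0$ or $\pm\alpha$ for some common $\alpha$, so all nonzero $k\times k$ subdeterminants of $A[R]$ on column sets of the form $T\cup\{j\}$ share the same absolute value. \emph{Global step:} the $k$-subsets $S$ with $\det A[R,S]\neq 0$ are exactly the bases of the column matroid of $A[R]$, and the basis exchange graph---whose edges connect bases differing in one element---is connected by the basis exchange axiom. Since the local step shows that $|\det A[R,\cdot]|$ is constant along each edge, it is constant over the entire set of bases.

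The main obstacle is the global step of the backward direction: the local argument only equates absolute values of subdeterminants of $A[R]$ that share a fixed set of $k-1$ columns, and bridging these local equalities to a single global value crucially requires invoking the connectivity of the matroid basis exchange graph.
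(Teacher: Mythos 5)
Your proof is correct, and your backward direction takes a genuinely different route from the paper's. The forward direction coincides: pivots and trims preserve total equimodularity via the determinantal identity you state, and a totally equimodular matrix is essentially $0,\!\pm1$ because each nonzero row is a $1\times n$ equimodular matrix. For the converse, the paper factors the work through Lemma~\ref{lemma:trim_E} (a full-row-rank matrix $A$ with a $0,\!\pm1$ row $r$ is equimodular iff every trim $A\trim(r,j)$, $j\in\supp(r)$, is). The ``if'' direction of that lemma is proved by a pairwise bridging argument on the trim equideterminants $d_j$: for $i,j\in\supp(r)$, either $A^i=\pm A^j$, or --- since $r_i,r_j\in\{\pm1\}$ rules out any other proportionality --- $A^i,A^j$ are independent and sit inside a common invertible $m\times m$ submatrix $D$, forcing $d_i=|\det D|=d_j$. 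The theorem then follows by an implicit induction over the matrix size, applying the lemma to $A_R$ for each linearly independent row subset $R$ (via $A_R\trim(r,j)=(A\trim(r,j))_{R\setminus\{r\}}$). You avoid the induction by working directly with a fixed $R$ of size $k$: a Schur-complement computation identifies the surviving row $i_0$ after $k-1$ pivot-trims as the vector of ratios $\det A[R,T\cup\{j\}]/\det A[R\setminus\{i_0\},T]$, the essentially-$0,\!\pm1$ hypothesis pins down their absolute values, and connectivity of the matroid basis exchange graph then links the different $T$'s. Your basis-exchange step plays the same bridging role as the paper's ``common invertible submatrix'' step, just at the level of all bases of the column matroid of $A[R]$ rather than pairs of columns. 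Both work; the paper's factoring through Lemma~\ref{lemma:trim_E} is reused heavily in the proof of the decomposition theorem, whereas your version is more self-contained for this single statement.
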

\begin{proof}[Sketch]
	It is immediate that pivoting and trimming preserve equimodularity.
	This can be turned into an equivalence as follows, by considering directly the involved determinants.
	\begin{lemma}[Lemma~\ref{pivoteqE}]\label{lemma:trim_E}
		Let $A_i$ be a $0,\!\pm1$ row of a full row rank matrix $A$.
		Then, $A$ is equimodular if and only if $A\trim(i,j)$ is equimodular for all $j\in\supp(A_i)$.
	\end{lemma}
	Then, the theorem follows.
	\end{proof}
We mention that Lemma~\ref{lemma:trim_E} will be essential throughout the proof of the decomposition theorem of the next section.

Note that Theorems~\ref{pivotTU} and~\ref{theorem:pivot_TE} can be rephrased as follows: A matrix is totally unimodular (resp. totally equimodular) if and only if any sequence of pivots and trims yields a totally unimodular (resp. totally equimodular) matrix. 

\medskip

It is known that taking the transpose or the inverse of a matrix preserves total unimodularity~\cite[Page~280]{Schrijver_1999}.
The transpose operation does not preserve total equimodularity in general, as it is highlighted for incidence matrices of graphs in~\cite{Chervet_Grappe_Lacroix_Pisanu_Wolfler-Calvo_2023}.
Neither does taking the inverse.
However, the combination of both operations preserves total equimodularity.

\begin{theorem}\label{theorem:invert_TE}
	Let $A$ be an invertible square matrix, then $A$ is totally equimodular if and only if $(A^{-1})^\top$ is totally equimodular.
\end{theorem}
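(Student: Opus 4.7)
The plan is to reduce the statement to Jacobi's complementary minor identity. Recall that for an invertible $n \times n$ matrix $A$ and any index sets $I, J \subseteq \{1, \ldots, n\}$ with $|I| = |J|$, one has
$$\det\bigl(A^{-1}[I, J]\bigr) = \pm \frac{\det\bigl(A[\bar{J}, \bar{I}]\bigr)}{\det(A)},$$
where $A[R, C]$ denotes the submatrix on rows $R$ and columns $C$, and the bar denotes complementation in $\{1, \ldots, n\}$. This identity is exactly what converts a statement about minors of $A^{-1}$ into a statement about complementary minors of $A$, and it will clarify why transposing and inverting together preserves total equimodularity while neither operation does so individually.

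Set $B = (A^{-1})^\top$ and assume $A$ is totally equimodular. Fix any subset $S \subseteq \{1, \ldots, n\}$ of row indices of $B$. Since $B$ is invertible, the rows $B_S$ are automatically linearly independent, so it suffices to show that all nonzero $|S|\times|S|$ minors of $B_S$ share the same absolute value. A typical such minor, obtained by selecting columns $J$ with $|J|=|S|$, equals $\det(A^{-1}[J, S])$ because $B = (A^{-1})^\top$, and by Jacobi's identity this is $\pm\det(A[\bar{S}, \bar{J}])/\det(A)$. As $J$ ranges over the size-$|S|$ column subsets, $\bar{J}$ ranges over the size-$(n-|S|)$ column subsets, so the numerators are precisely the maximal minors of the submatrix $A_{\bar{S}}$. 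Because $A$ is totally equimodular and $A_{\bar{S}}$ has full row rank (the rows of $A$ being linearly independent), all nonzero such minors share a common absolute value; dividing by $|\det(A)|$ then gives the common equideterminant of $B_S$. Hence $B$ is totally equimodular.

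The converse follows immediately by symmetry, since $(B^{-1})^\top = A$ when $B = (A^{-1})^\top$, so applying the forward direction to $B$ yields the reverse implication. The argument is essentially routine once Jacobi's identity is available; the only delicate point is keeping track of the fact that the rows of $A$ corresponding to the \emph{complement} $\bar{S}$ are the ones that must be equimodular. This complementation is precisely the reason that neither taking the transpose nor taking the inverse alone preserves total equimodularity, but their composition does.
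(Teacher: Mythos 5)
Your argument is correct, and it takes a genuinely different path from the paper. The paper reduces the statement to polyhedral machinery: it invokes two results from Chervet, Grappe, and Robert (that a cone is box-TDI iff every face has an equimodular face-defining matrix, and that box-TDIness is preserved under polarity), identifies $\{x : Ax \geq 0\}$ with a box-TDI cone, and then reads off the claim from the fact that the polar cone is described by $(A^{-1})^\top$. Your proof instead stays entirely inside linear algebra. Jacobi's complementary minor identity gives, for a subset $S$ of rows of $B=(A^{-1})^\top$ and any column set $J$ with $|J|=|S|$,
$$
\det\bigl(B[S,J]\bigr)=\det\bigl(A^{-1}[J,S]\bigr)=\pm\frac{\det\bigl(A[\bar S,\bar J]\bigr)}{\det(A)},
$$
so the maximal minors of $B_S$ are, up to sign and a global factor $1/\det(A)$, exactly the maximal minors of $A_{\bar S}$; equimodularity of $A_{\bar S}$ (guaranteed because $A$ is totally equimodular and invertible, so $A_{\bar S}$ has full row rank) transfers directly. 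The symmetry $(B^{-1})^\top=A$ gives the converse. This route is more elementary and self-contained, requiring only a classical determinant identity rather than the external box-TDI results; the paper's route has the advantage of sitting naturally inside the polyhedral framework that the rest of the paper develops, but as a proof of this specific statement yours is arguably cleaner, and it also makes transparent the complementation $S\mapsto\bar S$ that explains why neither transposition nor inversion alone preserves total equimodularity while their composition does.
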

\begin{proof}
	We will use the following~\cite[Theorem~2]{Chervet_Grappe_Robert_2021}: a cone is box-TDI if and only if the affine hull of each of its faces is described by an equimodular matrix.
	A full row rank matrix $B$ is {\em face-defining} for a cone $C$ when there exists a face $F$ of $C$ such that $\textrm{aff}(F)=\{x:Bx = 0\}$.
	Note that a face has an equimodular face-defining matrix if and only if all its face-defining matrices are equimodular.
	
	Let us prove that an invertible matrix $A$ is totally equimodular if and only if $(A^{-1})^\top$ is totally equimodular.
	
	Since $A$ is invertible, every subset of rows of $A$ forms a face-defining matrix of $C = \{x: Ax\geq 0\}$.
	Therefore, by \cite[{Theorem~2}]{Chervet_Grappe_Robert_2021}, $A$ is totally equimodular if and only if $C$ is box-TDI. 
	By \cite[Lemma~6]{Chervet_Grappe_Robert_2021}, the latter holds if and only if the polar $C^\star=\{x:x^\top z\leq 0, \text{ for all $z \in C$}\}$ of $C$ is box-TDI.
	Since $C$ is full-dimensional and simplicial, so is its polar, which is decribed by $C^\star=\{x: (A^{-1})^\top x \geq 0\}$.
	Then, this polar is box-TDI if and only if $(A^{-1})^\top$ is totally equimodular, which concludes.
	\end{proof}

\subsection{The decomposition theorem}\label{subsection:decomposition}

Recall that we identify a set of vectors as the matrix whose rows consist of those vectors.
A linearly independent set of $0,\!\pm1$ vectors is called a:
\begin{itemize}
	\item \emph{totally equimodular set} (\emph{te-set}) if its associated matrix is totally equimodular,
	\item \emph{totally unimodular set} (\emph{tu-set}) if its associated matrix is totally unimodular,
	\item \emph{te-lace} if it is a te-set, not a tu-set, and all its proper subsets are tu-sets,
	\item \emph{te-interlace} if it is a te-set, not a tu-set, and each pair of vectors is a te-lace.
\end{itemize}
Note that, since the vectors in a te-interlace \( A \) pairwise form te-laces, they share the same support. 
Therefore, up to permuting columns, we can write $A = \begin{bmatrix} A' & \mathbf{0} & \cdots & \mathbf{0} \end{bmatrix},$ where $A'$ is a $\pm 1$ matrix. 
When appropriate, the $\mathbf{0}$ columns can be omitted.
A te-interlace of size $\ell$ is {\em thin} if its equideterminant is $2^{\ell - 1}$ and {\em thick} if it is $2^\ell$.
A \emph{te-brick} is either a tu-set, a te-lace, a thin te-interlace, or a thick te-interlace.
Disjoint te-bricks are {\em mutually totally unimodular} (\emph{mutually-tu}) when, if a set intersects several of them, contains none of the te-laces, and at most one vector of each te-interlace, then it is a tu-set.

\medskip

These te-bricks are the basic structures onto which te-sets are built upon, as shown below.
The property of being mutually-tu implies that the decomposition is unique.

\begin{theorem}[Theorem~\ref{theorem:decomposition_te_full}]\label{theorem:decomposition_te}
	A linearly independent set of $0,\!\pm1$ vectors $A$ is a te-set if and only if it is the disjoint union of mutually-tu te-bricks. More precisely:
	$$A = \underbrace{U_{}}_{\text{tu-set}} \sqcup~\underbrace{L_1\sqcup \dots \sqcup L_k}_{\text{te-laces}}~\sqcup \underbrace{S_1\sqcup \dots\sqcup S_\ell}_{\text{thin te-interlaces}}\sqcup \underbrace{T_1\sqcup \dots\sqcup T_m}_{\text{thick te-interlaces}}\!\!.$$
\end{theorem}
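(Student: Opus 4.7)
The plan is to prove the two directions separately, with the ``only if'' decomposition being where essentially all the work lies.

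For the ``if'' direction, I would take any linearly independent subset $B\subseteq A$ and partition it by brick membership: $B=B_U\sqcup\bigsqcup_i B_{L_i}\sqcup\bigsqcup_j B_{S_j}\sqcup\bigsqcup_p B_{T_p}$. If $B$ contains no full te-lace and at most one row from each te-interlace, then mutually-tu-ness immediately gives that $B$ is a tu-set, hence equimodular with equideterminant~$1$. Otherwise $B$ inherits a ``complete'' contribution from one or more bricks, and I would exploit the column-block structure of te-interlaces (supported in a fixed column set, $\pm 1$ there, zero elsewhere) together with row operations sanctioned by the mutually-tu property to factor every nonzero $|B|\times|B|$ minor of $B$ as a product of minors coming from each intersection $B\cap\text{brick}$. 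Since each factor is already constant in absolute value (bricks being te-sets), the product is too, so $B$ is equimodular.

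For the ``only if'' direction I would reconstruct the bricks intrinsically from $A$. Let $\mathcal{L}$ be the collection of inclusionwise minimal non-tu subsets of $A$; by definition each element of $\mathcal{L}$ is a te-lace. Put an equivalence relation on the rows of $A$ by declaring $v\sim w$ whenever $\{v,w\}$ is contained in a common te-lace. Singleton classes together form the tu-part $U$; classes coinciding with a single te-lace are the $L_i$'s; classes in which every pair of rows already forms a te-lace are the te-interlaces, sorted into thin or thick according to whether the common equideterminant equals $2^{\ell-1}$ or $2^{\ell}$. The mutually-tu property then falls out for free: any subset that avoids full te-laces and takes at most one vector per te-interlace contains no element of $\mathcal{L}$, hence no minimal non-tu subset, hence is a tu-set. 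Uniqueness is automatic because each brick is characterized intrinsically in terms of $A$.

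The main obstacle is to show that every equivalence class falls cleanly into exactly one of the four brick types, ruling out ``hybrid'' classes in which only some pairs of rows form te-laces and with partially overlapping supports. I would argue that if two te-laces in a class failed to coincide on all their rows, then a triple straddling them would form a linearly independent set whose equideterminant is incompatible with the equimodularity of $A$, using Lemma~\ref{lemma:trim_E} to iteratively trim rows and reduce the analysis to a small, checkable contradiction. The same lemma powers a parallel induction on $|A|$: trim a row lying in the tu-part (or in a suitably controlled position inside a lace/interlace), apply the theorem to the smaller te-set, and lift the resulting decomposition back through the reverse pivot. Finally, the thin/thick dichotomy should follow from a direct Hadamard-style computation on a $\pm 1$ submatrix with identical row supports, after resigning one row to $\mathbf{1}$ so that the remaining rows become $\pm 1$ vectors whose shared $\ell\times\ell$ minor values can only be $2^{\ell-1}$ or $2^{\ell}$.
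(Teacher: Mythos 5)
Your high-level architecture matches the paper's: isolate te-laces, group intersecting ones into te-interlaces via an equivalence relation, and observe that what is left over is a tu-set. The equivalence relation $v\sim w$ iff $\{v,w\}$ lies in a common te-lace does work, but its transitivity is exactly the content of the paper's Lemma~\ref{keylemma}: one must rule out a te-lace of size $\geq 3$ intersecting another te-lace, and show that intersecting te-laces of size two combine into a te-interlace. You gesture at this (``iteratively trim rows and reduce the analysis to a small, checkable contradiction''), which is the right spirit, but it is the bulk of one direction and you should acknowledge it as a lemma in its own right rather than an afterthought.

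There are, however, two genuine gaps. First, the thin/thick dichotomy does not follow from ``a direct Hadamard-style computation.'' For a $\pm 1$ matrix of size $\ell$ the Hadamard bound is $\ell^{\ell/2}$, vastly above $2^{\ell}$; the constraint that forces the equideterminant of a te-interlace to be $2^{\ell-1}$ or $2^{\ell}$ (and nothing else) is total equimodularity of the whole matrix and all its trims, not a pointwise determinant bound. The paper proves this (Lemma~\ref{lemma:two_types_te-interlaces}) by a structural argument: after a trim the core $B$ of the te-interlace must be complement totally unimodular (giving $2^{\ell-1}$) or, through a nontrivial analysis involving Camion's theorem, Truemper's parity theorem, and the inverse-transpose characterization, must itself be a single te-lace (giving $2^{\ell}$). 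None of this is visible from resigning one row to $\mathbf{1}$ and ``computing.'' Second, the ``if'' direction's plan to ``factor every nonzero $|B|\times |B|$ minor of $B$ as a product of minors coming from each intersection $B\cap\text{brick}$'' is not justified and, taken literally, is false: distinct bricks overlap in column support, so a maximal minor of $B$ does not decompose into a product of per-brick minors. What does hold (lattice orthogonality, Remark~\ref{remark:mutually-tu}) concerns $\gcddet$, not individual minors, and is derived as a \emph{consequence} of the decomposition theorem; invoking it here would be circular. The paper's actual proof of this direction is a careful induction on $|A|$: one trims at a row in no te-interlace (or, when none exists, at a row in some te-interlace and rescales), proves that the resulting set still decomposes into mutually-tu te-bricks (Claims~\ref{cl1.0}--\ref{cl1.5}), applies the induction hypothesis, and then recovers equimodularity of $A$ via Lemma~\ref{lemma:trim_E}. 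Your plan needs to be replaced with (or reduced to) such an induction; the minor-factoring shortcut would not compile into a proof.
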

\begin{proof}[Sketch]
	There are two directions to be proven.
	First, that a te-set $A$ is the disjoint union of mutually-tu te-bricks.
	The starting point is that a te-set which is not a tu-set contains a te-lace.
	Let $\cal L$ be the family of te-laces of $A$.
	Then, $U=A\setminus (\bigcup_{L\in\cal L} L)$ is a tu-set.
	Moreover, by the following key lemma, pairwise intersecting members of $\cal L$ form a te-interlace.
	\begin{lemma}[Lemma~\ref{te-knotinter}]\label{keylemma}
		In a te-set, if two distinct te-laces intersect, then they are both of size two and their symmetric difference is a te-lace.
	\end{lemma}
	\begin{proof}[Sketch]
		The proof starts with a minimal counterexample, and studies the impact of various trims.
		First, we prove that the intersection of the two sets is a singleton. 
		Following this, we show that one of them has size two and the other has size three.
		Afterwards, we study the different possibilities concerning the supports of the four involved vector, as two independent vectors form a te-lace if and only if they have the same support, and a tu-set if and only if they either coincide or are opposite on their common support.
		This ends up contradicting their linear independence when one of them has size three.
		\end{proof}
	Regroup the intersecting members of $\cal L$ into a family $\cal I$ of maximal te-interlaces: the remaining te-laces $\cal L'$ of $\cal L$ are pairwise disjoint and intersect no te-interlace of $\cal I$. 
	The last ingredient to finish the decomposition is to prove that these te-interlaces are only of two types: the thin ones $\cal S$ and the thick ones $\cal T$.
	This is done by systematically studying the trims of te-interlaces.
	
	By construction, no te-lace intersects distinct sets among, $U$, $L\in\cal L'$, $S\in \cal S$, and $T\in \cal T$, thus these sets are mutually-tu, and $A$ is the disjoint union of mutually-tu te-bricks.
	
	\medskip
	
	Now, there remains to prove that disjoint unions of mutually-tu te-bricks form te-sets.
	The proof is by induction: let $A$ be such a set, and assume that all smaller sets which are the mutually-tu disjoint union of te-bricks are te-sets.
	All that remains to prove is that $A$ is equimodular.
	
	We start by proving that every $A'\subsetneq A$ also is a mutually-tu disjoint union of te-bricks.
	Then, we study the impact of trimming on the different te-bricks involved.
	Let $B$ be a te-brick of $A$ and $a=A_i$ a row of $A$. 
	We first prove the following thanks to the mutually-tu property: if $a\notin B$, then the set obtained from $B$ after $(i,j)$-trimming $A$, for any column $j$ of $A$ with $A_i^j\neq0$, is of the same type as $B$.
	If $a\in B$, there are several cases: if $B$ is a tu-set or a te-lace of size two, then, after rescaling, $B\trim (i,j)$ is a tu-set; if $B$ is a te-lace of size at least three, then $B\trim (i,j)$ is a te-lace.
	
	Now, if $A$ contains a row which is in no te-interlace, this row is used to trim $A$, and to retrieve its equimodularity thanks to the above facts and Lemma~\ref{lemma:trim_E}.
	Otherwise, each row of $A$ is in a te-interlace, and we need an additional property: trimming and then rescaling a te-interlace yields a te-set in which there are no te-interlaces.
	By the first direction of the theorem, it becomes the disjoint union of a tu-set and te-laces which are mutually-tu.
	We then prove that the set resulting from $A$ by rescaling such a trim is the mutually-tu disjoint union of te-bricks, and hence is a te-set by the induction hypothesis.
	In particular, fixing a row and performing all possible trims yields an equimodular matrix, and hence $A$ is equimodular, again by Lemma~\ref{lemma:trim_E}.
	\end{proof}

Theorem~\ref{theorem:decomposition_te} raises a complexity question: {\em Can the decomposition be obtained in polynomial time?}
Finding a candidate for the decomposition into disjoint te-bricks can be done in polynomial time, because testing total unimodularity can be done in polynomial time~\cite{Seymour1980}.
However, deciding if these te-bricks are mutually-tu seems challenging.

Once the decomposition is known, so is the equideterminant:
if $A$ is a te-set that decomposes as in Theorem~\ref{theorem:decomposition_te}, we have $\eqdet(A) = 2^{k+\sum_i (|S_i|-1) +\sum_j |T_j|}$.
Together with~\cite[{Theorem~1.7}]{Abdi2024}, this yields the following\footnote{Note that the full row rank assumption is dropped here.}.

\begin{corollary}
	For a totally equimodular matrix $A\in\{0,\!\pm1\}^{m\times n}$ and $b\in\mathbb{Z}^m$, the system $Ax\leq b$ is totally dual
	dyadic.
\end{corollary}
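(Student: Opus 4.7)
The plan is to deduce the corollary from \cite[Theorem~1.7]{Abdi2024} by checking that every nonzero subdeterminant of $A$ is $\pm$ a power of $2$. Indeed, the sentence preceding the corollary invokes Abdi's theorem precisely in this way, so total dual dyadicness of $Ax \leq b$ (for every integer $b$) will follow as soon as this subdeterminantal hypothesis on $A$ is verified.

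To verify it, I would fix any nonzero $k\times k$ subdeterminant $D$ of $A$. The $k$ rows supporting $D$ are linearly independent, hence the submatrix $A'$ of $A$ formed by these rows is totally equimodular of full row rank, and $|D| = \eqdet(A')$ by definition of the equideterminant. Applying Theorem~\ref{theorem:decomposition_te} to $A'$ yields a decomposition of $A'$ into mutually-tu te-bricks, and the formula $\eqdet(A') = 2^{k+\sum_i(|S_i|-1)+\sum_j |T_j|}$ stated immediately after that theorem certifies that $\eqdet(A')$, and hence $|D|$, is a power of $2$.

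Since this holds for every nonzero subdeterminant of $A$, the dyadic subdeterminantal hypothesis of \cite[Theorem~1.7]{Abdi2024} is met, and the total dual dyadicness of $Ax \leq b$ follows. There is no real obstacle: the combinatorial substance has been absorbed into Theorem~\ref{theorem:decomposition_te} and its accompanying equideterminant formula, and the corollary is a direct one-line deduction from those ingredients together with Abdi's result.
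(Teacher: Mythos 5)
Your proposal is correct and follows essentially the same route as the paper: reduce to a full row rank submatrix, invoke Theorem~\ref{theorem:decomposition_te} together with the equideterminant formula to see that the relevant (sub)determinants are powers of $2$, and then apply \cite[Theorem~1.7]{Abdi2024}. The paper compresses the reduction step into a footnote (``the full row rank assumption is dropped here''), which you have usefully spelled out; the only small blemish is a notational clash, since you use $k$ both for the order of the subdeterminant and (via the copied formula) for the number of te-laces in the decomposition of $A'$ — these need not coincide, but the conclusion that $\eqdet(A')$ is a power of $2$ is unaffected.
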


We mention that the full row rank hypothesis is essential to derive the above decomposition theorem, as the key lemma fails without it.
The situation might be dramatically intricated as shows the example in Figure~\ref{ctrex}.
This raises the question:
\textit{Is there a decomposition theorem for general totally equimodular matrices?}
\begin{figure}[ht]
	\centering
	${ M=
		\begin{tabular}{c}
			1\\
			2\\
			3\\
			4\\
			5\\
			6				
		\end{tabular}
		\begin{bmatrix}
			1 & 1 & 0 & 0\\
			1 & 0 & 1 & 0\\
			1 & 0 & 0 & 1\\
			1 & 1 & 1 & 1\\
			0 & 1 & 0 & 1\\
			0 & 0 & 1 & 1
		\end{bmatrix}
	}
	$
	\caption{A totally equimodular matrix $M$ without full row rank, in which the te-laces are  $\{1,2,3,4\}$, $\{3,4,5,6\}$, $\{1,3,5\}$, and $\{2,3,6\}$, and pairwise intersect. Note that there are even intersections of size two.}\label{ctrex}
\end{figure}

\subsection{A conjecture and connections with other classes of matrices}

Supported by the fact that a brute force enumeration by computer showed that there are no thick te-interlaces of size $8$, we conjecture the following.
\begin{conjecture}\label{conjecture:size_thick_te-interlaces}
	There are only two full-dimensional thick te-interlaces, up to resigning or permuting rows and columns:
	
		$${\left[\begin{array}{rrrr}
				1&1&1&1\\
				1&-1&-1&1\\
				1&-1&1&-1\\
				1&1&-1&-1
			\end{array}\right]\text{\normalsize and}\left[\begin{array}{rrrrrr}
				1&1&1&1&1&1\\
				1&1&1&1&-1&-1\\
				1&1&1&-1&-1&1\\
				1&1&-1&-1&1&1\\
				1&-1&-1&1&1&1\\
				1&-1&1&1&1&-1
			\end{array}\right]}.$$
	\end{conjecture}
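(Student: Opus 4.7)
The plan is to combine the trim operation (Lemma~\ref{lemma:trim_E}) with the decomposition theorem (Theorem~\ref{theorem:decomposition_te}) to carry out an inductive structural analysis. Let $A$ be a full-dimensional thick te-interlace of size $\ell$, so $A\in\{-1,1\}^{\ell\times\ell}$ with $|\det A|=2^\ell$. For any pivot $p=(i,j)$, since every entry of $A$ is $\pm 1$, every row of $A\trim p$ has entries in $\{-2,0,2\}$ and is rescalable by $2$; the rescaled matrix $B$ is then a $\pm 1$ matrix of size $\ell-1$ with $|\det B|=2^\ell/2^{\ell-1}=2$. By Theorem~\ref{theorem:decomposition_te} together with the equideterminant formula $\eqdet(B)=2^{k+\sum_i(|S_i|-1)+\sum_j|T_j|}$, the brick decomposition of $B$ must consist of a tu-set together with exactly one te-lace. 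One then argues that the thickness and te-interlace structure of $A$ forces the tu-set part to be empty, so that $B$ is a full-dimensional te-lace of size $\ell-1$.

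With this reduction in hand, I would classify full-dimensional te-laces of size $\ell-1$ as an auxiliary result, closely related to classical minimally non-totally-unimodular matrices. I would then reconstruct $A$ from the combinatorial consistency conditions imposed by the fact that \emph{every} pivot of $A$ must produce such a te-lace. After normalization (first row and first column all $1$'s by resigning), the resulting system is finite. For $\ell=4$ and $\ell=6$ it should admit exactly the two matrices displayed in the conjecture; the trims recover the edge-vertex incidence matrices of the triangle and of the $5$-cycle, respectively. For the small odd values of $\ell$, non-existence already follows either from the Hadamard bound (for $\ell=3$) or from the incompatibility of the reconstruction step (for $\ell=5,7$).

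The main obstacle lies in proving non-existence for $\ell\geq 8$. One must show that no coherent family of $\ell^2$ te-laces of size $\ell-1$ can be realized as the trims of a single $\pm 1$ matrix with equimodular subdeterminants throughout. The authors' computer enumeration up to $\ell=8$ suggests that the compatibility conditions become rapidly unsatisfiable, but extracting a closed-form obstruction --- perhaps via the Gram matrix $AA^\top$, orthogonality identities among the $\pm 1$ subdeterminants, or techniques from the theory of Hadamard and weighing matrices --- appears to be the crux of the problem. I do not immediately see how to avoid a case analysis beyond the computational one already carried out, and it is plausible that genuinely new ideas from combinatorial design theory are required to settle all $\ell\geq 9$ at once.
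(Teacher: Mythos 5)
This statement is a \emph{conjecture} in the paper, not a theorem: the authors offer only computational evidence (a brute-force check that no thick te-interlaces of size~$8$ exist) and do not prove it. There is therefore no proof in the paper against which to compare your argument. Your proposal is correspondingly not a proof either, and you are honest about that --- you explicitly identify the general case (even $\ell \geq 8$) as the open crux.

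What you do propose is consistent with the structure the paper develops. The reduction you describe --- that trimming a thick te-interlace of size $\ell$ and rescaling yields a full-dimensional te-lace of size $\ell - 1$ --- is exactly the content of Corollary~\ref{corollary:trim_te-interlace} (and of Claim~\ref{claim:B_is_te-lace} in the proof of Lemma~\ref{lemma:two_types_te-interlaces}), and the subsequent observation that full-dimensional te-laces are precisely minimally non-totally-unimodular matrices up to signing is correct. In Truemper's language, $A$ is a thick te-interlace if and only if its core is complement minimally non-totally unimodular (Corollary~\ref{corollary:interlace_core}), so your reconstruction problem coincides with the problem of classifying complement minimally non-totally-unimodular matrices, which appears to be open.

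Two smaller inaccuracies in the write-up. First, after trimming $A\in\{\pm 1\}^{\ell\times\ell}$ the entries lie in $\{-2,0,2\}$, so the rescaled matrix $B$ is $0,\pm 1$ (and indeed $0,1$ after resigning), not $\pm 1$ as you wrote; this matters because $B$ must be a (necessarily $0,\pm 1$) minimally non-totally-unimodular matrix, which generically has zero entries. Second, your treatment of odd $\ell \geq 5$ by ``incompatibility of the reconstruction step'' is vague; the paper gives a crisp parity obstruction via Theorem~\ref{theorem:Truemper_complement}: complement minimally non-totally-unimodular matrices have odd size, so the core size $\ell - 1$ is odd and $\ell$ must be even. (Hadamard's bound suffices only for $\ell=3$, since $2^5 = 32 < 48$ and $2^7=128 < 576$ do not contradict the maximal $\pm1$ determinants in sizes $5$ and $7$.) For the remaining even sizes $\ell\geq 8$ you, like the authors, have no argument; that is precisely what would have to be supplied to turn this conjecture into a theorem.
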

	This conjecture connects with other classes of matrices, beyond the parallel between totally equimodular and totally unimodular matrices.
	
	First, it turns out that there are further relations between each types of te-bricks.
	Let $A$ be a square invertible matrix. Then, $A$ is a thin te-interlace if and only if $\left(\frac{1}{2}A\right)^{-1}$ is a te-lace.
	In other words, thin te-interlaces are essentially the inverses of minimally non-totally unimodular matrices, where a matrix is \emph{minimally non-totally unimodular} if it is not totally unimodular, but all its proper submatrices are totally unimodular.
	Moreover, $A$ is a thick te-interlace if and only if $\left(\frac{1}{4}A\right)^{-1}$ is a thick te-interlace.
	
	More generally, te-interlaces encode classes of $0,\!1$ matrices studied by Truemper in~\cite{Truemper_1980} and~\cite{Truemper_1992}.
	Let $A$ be a $0,\!1$ matrix and $i$ a row index of $A$.
	The \emph{row-$i$ complement} of $A$ is the $0,\!1$ matrix obtained from $A$ whose $i$-th row is unchanged and whose $i'$-th rows are $A_i+A_{i'}\pmod{2}$, for $i'\neq i$.
	\emph{Column-$j$ complements} are defined similarly.
	The \emph{complement orbit} of $A$ is the set of $0,\!1$ matrices obtained from $A$ by any sequence of complement operations.
	A $0,\!1$ matrix $A$ is \emph{complement totally unimodular} if its complement orbit contains only totally unimodular matrices, and \emph{complement minimally non-totally unimodular} if its complement orbit contains only minimally non-totally unimodular matrices.
	Let $A$ be a $\pm1$ matrix that we write, up to resigning or permuting rows and columns, as
	\begin{equation}\label{equation:canonical_form}
		A=\begin{bmatrix}
			1 & \mathbf{1}^\top \\
			\mathbf{1} & \mathbf{J}-2B
		\end{bmatrix},
	\end{equation}
	for some $0,\!1$ matrix $B$.
	Then, up to resigning and rescaling, the trims of $A$ run across the complement orbit of $B$.
	An ingredient of the proof of Theorem~\ref{theorem:decomposition_te} is that, in~\eqref{equation:canonical_form}, $A$ is a thin te-interlace if and only if $B$ is complement totally unimodular, and $A$ is a thick te-interlace if and only if $B$ is complement minimally non-totally unimodular.

	\section{Hilbert triangulation of te-cones}\label{section:main_cones}

	\subsection{Hilbert basis of te-cones}\label{section:hb_te-bricks}
	
	In this section, we explicitly identify the Hilbert basis of te-cones.
	Recall that te-cones are simplicial cones generated by te-sets.
	By Theorem~\ref{theorem:decomposition_te}, a te-cone is the Minkowski sum of cones generated by mutually-tu te-bricks whose union is linearly independent.
	Thanks to the following, we can focus on each te-brick separately.
	
	\begin{lemma}[Lemma~\ref{lemma:hb_lattice_orthogonal}]\label{lemma:hb_minkowski_sum} The Hilbert basis of the Minkowski sum of cones generated by mutually totally unimodular te-bricks whose union is linearly independent is the union of the Hilbert basis of each.
	\end{lemma}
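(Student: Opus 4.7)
The plan is to reduce the lemma to the lattice identity
\[
\Z^n \cap \Span(A) \;=\; \bigoplus_{i=1}^k \bigl(\Z^n \cap \Span(B_i)\bigr),
\]
where $A = B_1 \sqcup \cdots \sqcup B_k$. Granted this identity, the linear independence of $A$ gives, for each $x \in C$, a unique decomposition $x = \sum_i x_i$ with $x_i \in \cone(B_i)$, which the identity upgrades to $x_i \in \Z^n$ whenever $x \in \Z^n$. Both inclusions of the lemma then follow in a few lines. For $\bigcup_i \cal H(C_i) \subseteq \cal H(C)$: any nontrivial splitting $h = y + z$ in $C \cap \Z^n$ of an $h \in \cal H(C_j)$ would decompose integrally along the $C_i$, and pointedness of each $C_i$ would force $y_i = z_i = 0$ for $i \neq j$, contradicting the irreducibility of $h$ inside $C_j$. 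For the reverse inclusion: any $h \in \cal H(C)$ integrally decomposes as $\sum h_i$ with $h_i \in C_i \cap \Z^n$, and indecomposability in $C$ forces all but one summand to vanish, so $h = h_j$ for some $j$ and $h_j$ is irreducible in $C_j$, hence in $\cal H(C_j)$.

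The heart of the proof is the lattice identity itself, which I would establish by a short tower-of-indices computation. For any full row rank integer matrix $M$, a Smith normal form argument shows that $[\Z^n \cap \Span(M) : \Lambda(M)]$ equals the gcd of the maximal minors of $M$, where $\Lambda(M)$ denotes the $\Z$-span of the rows of $M$; for an equimodular $M$ this gcd is exactly $\eqdet(M)$. Setting $L_0 = \Z^n \cap \Span(A)$ and $L_i = \Z^n \cap \Span(B_i)$, and noting that $A$ is equimodular by Theorem~\ref{theorem:decomposition_te}, this yields $[L_0 : \Lambda(A)] = \eqdet(A)$ and $[L_i : \Lambda(B_i)] = \eqdet(B_i)$ for each $i$. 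Since the $B_i$ are disjoint and span linearly independent subspaces, $\Lambda(A) = \bigoplus_i \Lambda(B_i)$, and therefore $\left[\bigoplus_i L_i : \Lambda(A)\right] = \prod_i \eqdet(B_i)$.

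Multiplying indices through the tower $\Lambda(A) \subseteq \bigoplus_i L_i \subseteq L_0$ now gives
\[
\eqdet(A) \;=\; \left[L_0 : \bigoplus_i L_i\right] \cdot \prod_i \eqdet(B_i),
\]
and the equideterminant factorisation $\eqdet(A) = \prod_i \eqdet(B_i)$ recorded right after Theorem~\ref{theorem:decomposition_te} forces $\left[L_0 : \bigoplus_i L_i\right] = 1$, which is the desired lattice identity.

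The main obstacle is precisely this equideterminant factorisation: without the mutually-tu hypothesis neither the total equimodularity of $A$ nor the product formula survives, as illustrated by the pair $\{(1,1),(1,-1)\}$, whose cone has Hilbert basis $\{(1,1),(1,-1),(1,0)\}$ with the extra element $(1,0)$ absent from either singleton cone. Once the factorisation is in hand, the tower of indices carries the rest of the argument.
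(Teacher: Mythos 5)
Your proof is correct and takes essentially the same approach as the paper: both rest on the equideterminant factorisation (lattice orthogonality) of the te-bricks to deduce that integer points of $C$ decompose integrally along the summand cones $C_i$, from which the two Hilbert-basis inclusions follow by pointedness and irreducibility exactly as you argue. The only difference is in the formalisation: the paper counts integer points in half-open zonotopes via Theorem~\ref{theorem:nbr_points_zono}, Corollary~\ref{corollary:mutually_tu}, and Seb\H o's Lemma~\ref{lemma:nontrivial_Hilb_zono}, whereas you run a tower-of-indices argument through Smith normal form; the two are equivalent (the zonotope point count is precisely the index $[\Z^n\cap\Span(M):\Lambda(M)]$), and your version makes the implicit bijection step in the paper's proof slightly more explicit.
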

	
	Then, here is the Hilbert basis of each type of te-brick.
	\begin{theorem}[Theorem~\ref{theorem:hb_te-bricks_full}]\label{theorem:hb_te-bricks}
		Let $A = \{a^1,\ldots,a^n\}$ be a te-brick and $C = \cone (A)$.	
		\begin{enumerate}[label=\arabic*., ref=\arabic*.]
			\item If $A$ is a tu-set, then $\cal H(C)= A$.\label{equation:hb_tu-set}
			\item If $A$ is a te-lace, then $\cal H(C)= A\cup\{\frac{1}{2}\sum_j a^j\}$.\label{equation:hb_te-lace}
			\item If $A$ is a thin te-interlace, then $\cal H(C) = A\cup \left\{\frac{1}{2} (a^i + a^j)\right\}_{1\leq i<j\leq n}$.\label{equation:hb_thin_te-interlace}
			\item If $A$ is a thick te-interlace, then one of the following holds: \label{equation:hb_thick_te-interlace}
			\begin{enumerate}[label=\alph*., ref=\theenumi\alph*]
				\item $\cal H(C) = A\cup \left\{\frac{1}{2} (a^i + a^j)\right\}_{1\leq i< j\leq n}\cup\left\{\frac{1}{4}\sum_{j} a^j\right\},$ \label{equation:hb_thick_te-interlaces_cas1}
				\item $\cal H(C) = A\cup\left\{\frac{1}{2} (a^i + a^j)\right\}_{1\leq i< j\leq n}\cup\left\{\frac{3}{4}a^i+\frac{1}{4}\sum_{j\neq i} a^j\right\}_{i\in\{1,\ldots,n\}}.$ \label{equation:hb_thick_te-interlaces_cas2}
			\end{enumerate}
			Moreover, the number of $1$'s and $-1$'s in each column of $A$ have the same parity $p\in\{0,1\}$, and~\ref{equation:hb_thick_te-interlaces_cas1} occurs if and only if $n\equiv 2p \pmod{4}$.
		\end{enumerate}
	\end{theorem}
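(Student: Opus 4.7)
The plan is to reduce by Lemma~\ref{lemma:hb_minkowski_sum} to the case of a single te-brick. Case~\ref{equation:hb_tu-set} is then immediate: a tu-set generates a unimodular simplicial cone whose Hilbert basis consists of its generators, as recalled in Section~\ref{section:cones}. For the other three cases I would use the standard description: the Hilbert basis of a simplicial cone $C=\cone(A)$ is contained in the generators together with the integer points of the half-open fundamental parallelepiped $P=\{\sum_i \lambda_i a^i : \lambda_i\in[0,1)\}$, and the number of integer points in $P$ equals $\eqdet(A)$, which is $2$, $2^{n-1}$, and $2^n$ in Cases~\ref{equation:hb_te-lace}, \ref{equation:hb_thin_te-interlace}, and \ref{equation:hb_thick_te-interlace} respectively. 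The workflow for each case is then to (a) check each proposed vector is integer and lies in $P$, (b) verify its minimality under decomposition in $C\cap\Z^n$, and (c) show that the proposed set generates every integer point of $P$.

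For Case~\ref{equation:hb_te-lace}, since $\eqdet=2$ there is exactly one nonzero integer point in $P$. Integrality of $h=\tfrac12\sum_j a^j$ follows from the defining property of a te-lace: trimming any row yields a tu-set, which after resigning forces each column of $A$ to have an even number of nonzero entries of like sign. Since $h\in P$ with all coefficients equal to $\tfrac12$, it must be the unique nontrivial parallelepiped point; minimality is automatic, as a decomposition $h=u+v$ with $u,v$ nonzero integer vectors of $C$ would put both summands into a strictly smaller parallelepiped containing no nonzero integer vector.

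For Cases~\ref{equation:hb_thin_te-interlace} and~\ref{equation:hb_thick_te-interlace}, I would work with the canonical form~\eqref{equation:canonical_form} in which $B$ is complement totally unimodular (thin case) or complement minimally non-totally unimodular (thick case). Integrality of the pairwise half-sums $\tfrac12(a^i+a^j)$ follows from the rows of a te-interlace agreeing modulo $2$ on their common support, and minimality is again obtained by a parallelepiped argument. The main task is to enumerate the $2^{n-1}$, respectively $2^n$, integer points of $P$ and show each is a nonnegative integer combination of the proposed Hilbert basis. I plan to do this by induction on $n$ via a pivot/trim on one generator: Theorem~\ref{theorem:decomposition_te} applied to the trimmed matrix produces a mutually-tu disjoint union of smaller te-bricks, to which the inductive hypothesis and Lemma~\ref{lemma:hb_minkowski_sum} apply, while Lemma~\ref{lemma:trim_E} controls how the equideterminant evolves.

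The main obstacle will be the thick te-interlace case, specifically the dichotomy between~\ref{equation:hb_thick_te-interlaces_cas1} and~\ref{equation:hb_thick_te-interlaces_cas2}. The extra factor of~$2$ in the equideterminant produces a new class of Hilbert basis elements with denominators equal to $4$, but whether this class consists of the single quarter-sum $\tfrac14\sum_j a^j$ or of the $n$ rotated vectors $\tfrac34 a^i+\tfrac14\sum_{j\ne i}a^j$ depends on the column parities of $A$. The condition $n\equiv 2p\pmod 4$ should emerge from the precise requirement that $\tfrac14\sum_j a^j$ be integer, equivalently that every column sum of $A$ be divisible by $4$; translated via the canonical form and the common column parity $p$, this becomes the stated congruence. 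Once the parallelepiped points are located and matched against the Hilbert basis candidates, verifying minimality of the quarter-level vectors (ruling out further decompositions $h=u+v$ in $C\cap\Z^n$) should close the argument.
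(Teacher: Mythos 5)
Your overall framework (Seb\H o's result that Hilbert basis elements lie in the half-open fundamental parallelepiped, the equideterminant as the count of lattice points there, and the parity argument distinguishing cases~\ref{equation:hb_thick_te-interlaces_cas1} from~\ref{equation:hb_thick_te-interlaces_cas2}) matches the paper, and your handling of Cases~\ref{equation:hb_tu-set}--\ref{equation:hb_te-lace} together with the integrality and minimality observations is sound. However, there is a genuine gap in your step~(c) for Cases~\ref{equation:hb_thin_te-interlace} and~\ref{equation:hb_thick_te-interlace}, i.e.\ showing that the proposed set $S$ generates every lattice point of the parallelepiped. You propose to do this by induction, pivoting and trimming on one generator and invoking the decomposition theorem on the trimmed matrix. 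This does not recurse: by the paper's Corollary~\ref{corollary:trim_te-interlace}, trimming a thin (resp.\ thick) te-interlace and rescaling yields a tu-set (resp.\ a te-lace), never a smaller te-interlace, so the inductive hypothesis is not of the form you need. More seriously, pivoting performs row operations that change the generating set of the cone, and trimming changes the ambient dimension; neither operation transports the Hilbert basis of $\cone(A\trim p)$ back to Hilbert basis elements of $\cone(A)$ in any direct way, and your proposal does not supply the bridge.

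The paper closes this step by a direct count, not induction. Writing a lattice point $x\in\mathcal{Z}^<(A)$ uniquely as $x=\sum_k\lambda_k a^k$, the constraints that the $a^k$ be $\pm 1$ on a common support and that $x$ be integer force $\lambda_k\in\{0,\tfrac12\}$ with an even number of halves (thin case) or $\lambda_k\in\{0,\tfrac14,\tfrac12,\tfrac34\}$ with suitable parity constraints (thick case). Counting the admissible $\lambda$-vectors with $\sum_{i\text{ even}}\binom{n}{i}=2^{n-1}$ shows that the nonnegative integer combinations of $S$ already account for \emph{all} $\eqdet(A)$ lattice points of $\mathcal{Z}^<(A)$, whence $\mathcal{H}(C)\subseteq S$ and equality follows from the minimality of each element of $S$. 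You should replace the pivot/trim induction by this explicit enumeration (or supply a precise lemma relating the lattice points of $\mathcal{Z}^<(A)$ to those of $\mathcal{Z}^<(A\trim p)$, which is currently missing).
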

	\begin{proof}[Sketch]~
		\begin{enumerate}
			\item[{\em 1.}] Simplicial cones generated by tu-sets have no nontrivial Hilbert basis elements.
			\item[{\em 2.}] The only nontrivial Hilbert basis element of a simplicial cone generated by a te-lace is the half sum of its generators.
			\item[{\em 3-4.}] For cones generated by te-interlaces, a result of Seb\H o~\cite{Sebo_1990} tells us that the Hilbert basis of a cone lies within the half-open zonotope generated by the generators of the cone.
			We know that this number of points is the equideterminant of $A$. 
			Let $\cal S$ be the set of vectors in Case~\ref{equation:hb_thin_te-interlace}, \ref{equation:hb_thick_te-interlaces_cas1}, or~\ref{equation:hb_thick_te-interlaces_cas2}.
			In each case, we find out that the number of distinct nonnegative integer combinations of the vectors in $\cal S$ within the half-open zonotope is also equal to the equideterminant of $A$.
			This implies that $\cal{H}(C)\subseteq\cal S$.
			Finally, $\cal S=\cal H(C)$ since no vectors of $\cal S$ is a nonnegative integer combinations of the others.
		\end{enumerate}
		\end{proof}

	\subsection{Regular unimodular Hilbert triangulation of te-cones}\label{section:RUHT}
	
	In this section, we prove that cones generated by te-sets with no thick te-interlace of size greater than six admit a regular unimodular Hilbert triangulation.
	Thanks to Theorems~\ref{theorem:decomposition_te} and~\ref{theorem:hb_te-bricks}, and to the lemma below, it is enough to provide a regular unimodular Hilbert triangulation for each type of te-brick involved.
	
	The \emph{join} of two triangulations $\cal T_1$ and $\cal T_2$ of two cones generated by disjoint sets whose union is linearly independent is the triangulation $\cal T_1 * \cal T_2 = \{C_1+C_2\colon C_1\in \cal T_1,C_2\in \cal T_2\}$.
	\begin{lemma}[Lemma~\ref{lemma:joint_is_UHT_full}]\label{lemma:join_is_UHT}
		The join of the regular unimodular Hilbert triangulations of cones generated by disjoint te-bricks whose union is linearly independent is a regular unimodular Hilbert triangulation of their Minkowski sum.
	\end{lemma}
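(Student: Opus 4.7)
The plan is to verify the four defining properties of a regular unimodular Hilbert triangulation in turn for $\mathcal{T}_1 * \mathcal{T}_2$. The guiding observation throughout is that, since $A_1\sqcup A_2$ is linearly independent, $\operatorname{span}(A_1)\cap\operatorname{span}(A_2)=\{\mathbf{0}\}$; hence every $x\in C_1+C_2$ has a \emph{unique} decomposition $x=x_1+x_2$ with $x_i\in\operatorname{span}(A_i)$. This unique decomposition will drive every step.

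First, I would confirm that $\mathcal{T}_1*\mathcal{T}_2$ is a triangulation of $C_1+C_2$: each cone $C_1'+C_2'$ is simplicial because the generators of $C_1'$ and $C_2'$ lie in complementary subspaces and are therefore linearly independent; the cones cover $C_1+C_2$ by decomposing $x=x_1+x_2$ with $x_i\in C_i$ and placing each $x_i$ in some cone of $\mathcal{T}_i$; and the unique decomposition yields $(C_1'+C_2')\cap(D_1'+D_2')=(C_1'\cap D_1')+(C_2'\cap D_2')$, which is a face of both by the face conditions in $\mathcal{T}_1$ and $\mathcal{T}_2$. The Hilbert property is then immediate from Lemma~\ref{lemma:hb_minkowski_sum}: the generators of each cone of the join come from $\mathcal{H}(C_1)\cup\mathcal{H}(C_2)=\mathcal{H}(C_1+C_2)$.

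For the unimodularity of $C_1'+C_2'=\cone(B_1\cup B_2)$, I would show that any integer point $x$ of this cone is a nonnegative integer combination of $B_1\cup B_2$. The unique decomposition gives $x=x_1+x_2$ with $x_i\in C_i'$; combining this with Lemma~\ref{lemma:hb_minkowski_sum} (which, via the mutually totally unimodular structure, writes integer points of $C_1+C_2$ as sums of integer points from each piece) forces $x_i\in C_i'\cap\mathbb{Z}^n$. The unimodularity of each $C_i'$ then expresses $x_i$ as a nonnegative integer combination of $B_i$, as needed. For regularity, I would pick convex piecewise linear functions $\varphi_1,\varphi_2$ on $C_1,C_2$ inducing $\mathcal{T}_1,\mathcal{T}_2$, and define $\varphi(x)=\varphi_1(x_1)+\varphi_2(x_2)$ via the unique decomposition. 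Well-definedness, convexity, and piecewise linearity transfer directly from $\varphi_1$ and $\varphi_2$, and the maximal domains of linearity of $\varphi$ are exactly the Minkowski sums of those of the $\varphi_i$, i.e.\ the cones of $\mathcal{T}_1*\mathcal{T}_2$.

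The subtlest point is the integrality of the decomposition $x=x_1+x_2$ in the unimodularity step: without the mutually totally unimodular structure implicit in the context, the $x_i$ could be merely rational, and the join of two unimodular triangulations need not be unimodular (for example, joining $\cone\{(1,1)\}$ with $\cone\{(1,-1)\}$ yields a cone of equideterminant $2$). This is precisely where the appeal to Lemma~\ref{lemma:hb_minkowski_sum} cannot be avoided, and I expect it to be the main technical obstacle.
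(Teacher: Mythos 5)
Your proof is correct and handles all four properties, though for two of them it takes a genuinely different route from the paper. For unimodularity, the paper works directly with determinants: it observes that each $B_i$ equals $Q_iA_i$ for an invertible $Q_i$ (so $B_i$ inherits equimodularity from the te-brick $A_i$), that unimodularity of $C_i'$ forces $|\det(Q_i)|\eqdet(A_i)=1$, and that $\gcddet$ is multiplicative across the lattice-orthogonal blocks; multiplying everything out gives $\pm1$. You instead argue at the level of Hilbert bases: via Lemma~\ref{lemma:hb_minkowski_sum} and the uniqueness of the decomposition $x=x_1+x_2$ (which holds since $\operatorname{span}(A_1)\cap\operatorname{span}(A_2)=\{\mathbf{0}\}$), every integer point of $\cone(B_1\cup B_2)$ has integral components $x_i\in C_i'\cap\Z^n$, which by unimodularity of each $C_i'$ are nonnegative integer combinations of $B_i$. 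This is equivalent to $\gcddet(B_1\cup B_2)=1$ by the zonotope count (Theorem~\ref{theorem:nbr_points_zono}). Both arguments are valid; the paper's is shorter, while yours makes the dependence on Lemma~\ref{lemma:hb_minkowski_sum} more explicit. For regularity, the paper cites~\cite[Section~2.3.2]{Haase_Paffenholz_Piechnik_Santos_2021}, whereas you build the certifying function $\varphi(x)=\varphi_1(x_1)+\varphi_2(x_2)$ directly; this is precisely the construction behind that citation, so the two agree. Your closing paragraph correctly identifies why the mutually-tu (i.e.\ lattice-orthogonal) hypothesis cannot be dropped, and the example of $\cone\{(1,1)\}$ joined with $\cone\{(1,-1)\}$ is exactly a failure of lattice orthogonality: $\gcddet$ of the union is $2$, not the product $1\cdot1$, and indeed the join is not unimodular. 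The only thing worth flagging is that you treat only $k=2$ factors while the paper states the result for any number; this is harmless since the general case follows by induction (or by replacing every occurrence of ``two'' by $k$).
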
 

\begin{proof}[Sketch]
	By Lemma~\ref{lemma:hb_minkowski_sum}, this construction gives a Hilbert triangulation.
	The te-bricks are disjoint and their union is linearly independent, so this join triangulation is regular by~\cite[Sect.~2.3.2]{Haase_Paffenholz_Piechnik_Santos_2021}.
	Moreover, each cone in the join is unimodular by the fact the te-bricks are mutually totally unimodular.
	\end{proof}


If Conjecture~\ref{conjecture:size_thick_te-interlaces} is true, then the following theorem together with Lemma~\ref{lemma:join_is_UHT} implies that all te-cones admit a regular unimodular Hilbert triangulation.

\begin{theorem}[Theorem~\ref{theorem:triangulation_te-sets_l}]\label{theorem:triangulation_te-sets}
	Let $A$ be a te-set without thick te-interlace of size greater than six.
	Then, $\cone (A)$ has a regular unimodular Hilbert triangulation.
\end{theorem}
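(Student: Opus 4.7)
The approach is to exploit the structural decomposition. By Theorem~\ref{theorem:decomposition_te}, $A$ is a mutually-tu disjoint union of te-bricks, each of which---by the size hypothesis---is a tu-set, a te-lace, a thin te-interlace, or a thick te-interlace of size $4$ or $6$. By Lemma~\ref{lemma:join_is_UHT}, it suffices to produce a regular unimodular Hilbert triangulation of $\cone(B)$ for each such brick $B$, since the join of these triangulations then gives one for $\cone(A)$.

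The first two cases are direct. If $B$ is a tu-set, then $\cone(B)$ is already unimodular with Hilbert basis $B$ (Theorem~\ref{theorem:hb_te-bricks}.1), so the singleton triangulation $\{\cone(B)\}$ is trivially regular, unimodular, and Hilbert. If $B=\{a^1,\dots,a^n\}$ is a te-lace with nontrivial Hilbert basis element $h=\frac{1}{2}\sum_j a^j$, I would take the star subdivision at $h$, namely $\cal T=\{\cone((B\setminus\{a^j\})\cup\{h\}):j=1,\dots,n\}$. Each cell is unimodular since substituting $h$ for any $a^j$ scales the determinant by $\tfrac{1}{2}$ and the equideterminant is $2$; the cells cover $\cone(B)$ because $h$ lies in the relative interior; and regularity is witnessed by the lifting $\omega(a^j)=0$, $\omega(h)=-1$, whose lower envelope is piecewise linear with breaks precisely along the rays through $h$.

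For thin and thick te-interlaces, the canonical form \eqref{equation:canonical_form} and the Hilbert basis description in Theorem~\ref{theorem:hb_te-bricks}.3--4 guide the construction. For a thin te-interlace $B=\{a^1,\dots,a^n\}$, whose Hilbert basis adds the $\binom{n}{2}$ pairwise midpoints $m_{ij}=\tfrac{1}{2}(a^i+a^j)$, I would use the midpoint subdivision: the $n$ corner cells $\cone(\{a^i\}\cup\{m_{ij}:j\neq i\})$ together with an inner region that triangulates recursively. For $n=3$ this yields the classical subdivision of a triangle into four smaller unimodular triangles; for $n=4$ the inner region is an octahedron that splits into four further unimodular cells. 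In general, a direct determinant computation shows each corner cell has determinant $\pm 1$, and the recursive triangulation of the interior produces the remaining cells, for a total of $2^{n-1}$ unimodular Hilbert cells matching the equideterminant. For thick te-interlaces of size $4$ and $6$, by Conjecture~\ref{conjecture:size_thick_te-interlaces} only two representative matrices (up to resigning and column permutation) need be handled; for each, an explicit triangulation with $2^n$ unimodular cells can be written down using the Hilbert basis from Theorem~\ref{theorem:hb_te-bricks}.4, with the size-$4$ matrix falling in case~\ref{equation:hb_thick_te-interlaces_cas1} and the size-$6$ matrix falling in case~\ref{equation:hb_thick_te-interlaces_cas2}.

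The main obstacle is verifying regularity for the interlace triangulations: one must exhibit a convex piecewise linear lifting whose lower envelope reproduces the proposed subdivision. My plan is to take a symmetric lifting that is $0$ on generators, strictly negative on pairwise midpoints, and even more negative on any higher-order Hilbert basis elements, tuning the gaps so that the induced domains of linearity match the proposed cells. Unimodularity of each cell then reduces---via pivoting and trimming (Theorem~\ref{theorem:pivot_TE})---to computing a determinant of a rescaled essentially $0,\!\pm 1$ matrix arising from a totally unimodular minor, which keeps the verification tractable. Hilbertness is automatic since only Hilbert basis elements of $\cone(A)$ appear as generators of the cells.
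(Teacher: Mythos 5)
Your reduction via Theorem~\ref{theorem:decomposition_te} and Lemma~\ref{lemma:join_is_UHT}, and your handling of tu-sets and te-laces, match the paper's approach (the paper invokes the strong-pulling lemma from~\cite{Haase_Paffenholz_Piechnik_Santos_2021} rather than an ad hoc lifting, but your lifting argument for the stellar/te-lace case is sound). The gaps are in the two interlace cases.

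For thin te-interlaces, your ``midpoint subdivision with a recursive inner triangulation'' is not a proof: after peeling off the $n$ corner simplices, the interior region is (up to the linear transformation by $A^\top$) the second hypersimplex $\Delta_{n,2}$, which is not a simplex, and you never construct its triangulation, show it is unimodular, or show it is regular. You only announce that ``the recursive triangulation of the interior produces the remaining cells,'' which is precisely the nontrivial content. The paper avoids this by invoking the De Loera--Sturmfels--Thomas regular triangulation of $\Delta_{n,2}$ by thrackles/odd cycles, encoded here via stellar cycles of $\mathring{K}_n$, and then glues the corner simplices onto it; this gives an explicit list of $2^{n-1}$ cells and a concrete determinant computation for unimodularity of each. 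Your plan to ``take a symmetric lifting\ldots tuning the gaps so that the induced domains of linearity match'' likewise asserts regularity rather than proving it; one must show such a lifting exists and that the domains of linearity are the proposed cells, which is exactly what the DST result provides for free.

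For thick te-interlaces of size at most six, you appeal to Conjecture~\ref{conjecture:size_thick_te-interlaces} to restrict to two representative matrices. This is circular: the conjecture is not available, and the theorem is stated unconditionally (it merely assumes no thick te-interlace of size $>6$). More importantly, even restricting attention to sizes $4$ and $6$, both Hilbert-basis types~\ref{equation:hb_thick_te-interlaces_cas1} and~\ref{equation:hb_thick_te-interlaces_cas2} can occur for each size (resigning a row flips the parity $p$ in Theorem~\ref{theorem:hb_te-bricks}.\ref{equation:hb_thick_te-interlace}, and a resigned te-interlace generates a genuinely different cone), so there are four cases, not two; the paper's computational verification covers all four. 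Your statement that ``the size-$4$ matrix falls in case~\ref{equation:hb_thick_te-interlaces_cas1} and the size-$6$ matrix falls in case~\ref{equation:hb_thick_te-interlaces_cas2}'' therefore omits half of the required checks.
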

\begin{proof}[Sketch]
	By Theorem~\ref{theorem:decomposition_te}, $A$ is the disjoint union of mutually-tu te-bricks, and then $C = \cone(A)$ is the Minkowski sum of the cones generated by these te-bricks.
	By Lemma~\ref{lemma:join_is_UHT}, there remains to triangulate the cones generated by each of the four te-bricks.
	We thus suppose that $A=\{a^1,\ldots,a^n\}$ is a te-brick.
	We organize the proof according to the different cases for the Hilbert basis of Theorem~\ref{theorem:hb_te-bricks}.
	\begin{enumerate}[label=\em\arabic*., ref=\arabic*.]
		\item When $A$ is a tu-set, the regular unimodular Hilbert triangulation is the cone.
		\item When $A$ is a te-lace, the stellar triangulation at $h=\frac{1}{2}\sum_j a^j$, namely the one formed by the $n$ cones generated by $h$ and $n-1$ generators among $n$, is regular since its coincides with the strong pulling at $h$ which preserved regularity~\cite[Lemma~2.1]{Haase_Paffenholz_Piechnik_Santos_2021}.	Moreover, a determinant computation yields the unimodularity. Finally all the cones are generated by Hilbert basis elements.
		\item Suppose $A$ is a thin te-interlace.\label{caseproof:second_hypersimplex_triang}
		Inspired by the regular triangulation in~\cite{DeLoera_Sturmfels_Thomas_1995}, we start this case with some definitions.
		A \emph{spanning} subgraph of a graph $G=(V,E(G))$ is a connected graph $H=(V,F)$ with $F\subseteq E(G)$.
		Let $\mathring{K}_n$ be a complete graph with $n$ vertices to which we added an edge $ii$ called a \emph{loop} at each vertex $i$.
		The edges of $\mathring{K}_n$ encode the Hilbert basis elements of $C$ as follows: an edge $ij$ represents $\frac{1}{2}(a^i+a^j)$. The latter is $a^i$ for a loop~$ii$.
		Embed $\mathring{K}_n$ as a convex $n$-gon in $\R^2$, with clockwise labeled vertices $v_1,\dots,v_n$, edges $ij$ embedded as line segments $[v_i,v_j]$, for each  $i\neq j$, and loops $ii$ as circles outside the $n$-gon, intersecting the $n$-gon only at $v_i$.
		We say that two distinct edges \emph{intersect} if the associated curves intersect.		
		This happens either if they have a common extremity, or if the edges are $ik$ and $jl$ with $i<j<k<l$.
		A \emph{stellar cycle} of this embedding $\mathring{K}_n$ is a spanning subgraph with $n$ pairwise intersecting edges or loops.
		Let $\cal S_n$ denote the set of stellar cycles of $ \mathring{K}_n$.
		
		\begin{claim}
			The cones $C_S=\cone(\frac{1}{2}(a^i+a^j)\colon ij\in E(S))$, for all $S\in \cal S_n$, form a regular unimodular Hilbert triangulation of $C$.
		\end{claim}
		\begin{proof}[Sketch]
			These cones are all Hilbert.
			Up to a linear transformation by $(2A^{-1})^\top$, it is the regular triangulation of the second hypersimplex given in~\cite{DeLoera_Sturmfels_Thomas_1995} to which we attach the simplex composed of $2e^i$ and the simplicial facet $\{x_i=1\}$, for $i=1,\ldots,n$.
			Attaching these simplices preserves regularity and determinant computations yield unimodularity.
			\end{proof}
		\item Suppose that $A$ is a thick te-interlace.
		There are four cases: $n=4$ or $6$ and Case~\ref{equation:hb_thick_te-interlaces_cas1} or~\ref{equation:hb_thick_te-interlaces_cas2}.
		We used Polymake~\cite{Gawrilow_Joswig_2000} to check regularity and a simple algorithm to check unimodularity. See Figure~\ref{figure:triangulation} for the case $n=4$.
		\begin{enumerate}[label=\em\alph*., ref=\theenumi.\alph*.]
			\item  By taking the triangulation of Case~\ref{caseproof:second_hypersimplex_triang} restricted to the boundary of $C$ and adding to each of its cones the generator $h=\frac{1}{4}\sum_j a^j$, we obtain a unimodular triangulation of $C$, see Figure~\ref{fig:first}.
			\item Here, we set $h^i = \frac{3}{4}a^i+\frac{1}{4}\sum_{j\neq i} a^j$, for $i=1,\ldots,n$, and the triangulation has a more complex structure, see Figure~\ref{fig:second}.
		\end{enumerate}
	\end{enumerate}

	\begin{figure}[!ht]
		\begin{subfigure}{0.33\textwidth}
			\begin{center}			
				\definecolor{wrwrwr}{rgb}{0,0,0}
				\definecolor{blue}{rgb}{0.08235294117647059,0.396078431372549,0.7529411764705882}
				\definecolor{orange}{rgb}{0.7803921568627451,0.3137254901960784,0}
				
				\begin{tikzpicture}[line cap=round,line join=round,>=triangle 45,x=1cm,y=1cm, scale=0.32]
					\fill[color=blue,fill=blue,fill opacity=0.10000000149011612] (-2.36,0.87) -- (0.52,-2.35) -- (1.06,2.45) -- cycle;
					\fill[color=blue,fill=blue,fill opacity=0.10000000149011612] (-2.36,0.87) -- (-1.39,0.23) -- (0.52,-2.35) -- cycle;
					\fill[color=blue,fill=blue,fill opacity=0.10000000149011612] (1.06,2.45) -- (-1.39,0.23) -- (-2.36,0.87) -- cycle;
					\fill[color=blue,fill=blue,fill opacity=0.10000000149011612] (1.06,2.45) -- (-1.39,0.23) -- (0.52,-2.35) -- cycle;
					\draw [color=wrwrwr] (-4.78,-0.05)-- (-2.9,-3.93);
					\draw [color=wrwrwr] (-2.9,-3.93)-- (3.94,-0.77);
					\draw [dash pattern=on 1pt off 1pt,color=wrwrwr] (-4.78,-0.05)-- (3.94,-0.77);
					\draw [color=wrwrwr] (-4.78,-0.05)-- (-1.82,5.67);
					\draw [color=wrwrwr] (-1.82,5.67)-- (-2.9,-3.93);
					\draw [color=wrwrwr] (-1.82,5.67)-- (3.94,-0.77);
					\draw [color=wrwrwr] (-3.3,2.81)-- (-3.84,-1.99);
					\draw [color=wrwrwr] (-2.36,0.87)-- (-3.3,2.81);
					\draw [color=wrwrwr] (-3.84,-1.99)-- (-2.36,0.87);
					\draw [color=wrwrwr] (-2.36,0.87)-- (0.52,-2.35);
					\draw [color=wrwrwr] (0.52,-2.35)-- (1.06,2.45);
					\draw [color=wrwrwr] (1.06,2.45)-- (-2.36,0.87);
					\draw [dash pattern=on 1pt off 1pt,color=wrwrwr] (-0.42,-0.41)-- (-3.84,-1.99);
					\draw [dash pattern=on 1pt off 1pt,color=wrwrwr] (-3.84,-1.99)-- (0.52,-2.35);
					\draw [dash pattern=on 1pt off 1pt,color=wrwrwr] (0.52,-2.35)-- (-0.42,-0.41);
					\draw [dash pattern=on 1pt off 1pt,color=wrwrwr] (-0.42,-0.41)-- (-3.3,2.81);
					\draw [dash pattern=on 1pt off 1pt,color=wrwrwr] (-3.3,2.81)-- (1.06,2.45);
					\draw [dash pattern=on 1pt off 1pt,color=wrwrwr] (1.06,2.45)-- (-0.42,-0.41);
					\draw [dash pattern=on 1pt off 1pt,color=wrwrwr] (-1.39,0.23)-- (-4.78,-0.05);
					\draw [dash pattern=on 1pt off 1pt,color=wrwrwr] (-1.39,0.23)-- (-0.42,-0.41);
					\draw [dash pattern=on 1pt off 1pt,color=wrwrwr] (-1.39,0.23)-- (-2.9,-3.93);
					\draw [dash pattern=on 1pt off 1pt,color=wrwrwr] (-1.39,0.23)-- (-3.84,-1.99);
					\draw [dash pattern=on 1pt off 1pt,color=wrwrwr] (-1.39,0.23)-- (-1.82,5.67);
					\draw [dash pattern=on 1pt off 1pt,color=wrwrwr] (-1.39,0.23)-- (3.94,-0.77);
					\draw [dash pattern=on 1pt off 1pt,color=wrwrwr] (-1.39,0.23)-- (-3.3,2.81);
					
					\draw [color=blue] (-2.36,0.87)-- (0.52,-2.35);
					\draw [color=blue] (0.52,-2.35)-- (1.06,2.45);
					\draw [color=blue] (1.06,2.45)-- (-2.36,0.87);
					\draw [dash pattern=on 1pt off 1pt,color=blue] (-2.36,0.87)-- (-1.39,0.23);
					\draw [dash pattern=on 1pt off 1pt,color=blue] (-1.39,0.23)-- (0.52,-2.35);
					\draw [dash pattern=on 1pt off 1pt,color=blue] (1.06,2.45)-- (-1.39,0.23);
					\begin{scriptsize}
						\draw [fill=black] (-4.78,-0.05) circle (2.5pt);
						\draw[color=black] (-5,0.3) node {$a^1$};
						\draw [fill=black] (-2.9,-3.93) circle (2.5pt);
						\draw[color=black] (-2.7,-3.6) node[below] {$a^2$};
						\draw [fill=black] (3.94,-0.77) circle (2.5pt);
						\draw[color=black] (4.5,-0.4) node {$a^3$};
						\draw [fill=black] (-1.82,5.67) circle (2.5pt);
						\draw[color=black] (-1.5,6.2) node {$a^4$};
						\draw [fill=black] (-3.84,-1.99) circle (2.5pt);
						\draw[color=black] (-4.5,-2.6) node {$m^{12}$};
						\draw [fill=black] (0.52,-2.35) circle (2.5pt);
						\draw[color=black] (1.2,-2.5) node {$m^{23}$};
						\draw [fill=black] (-2.36,0.87) circle (2.5pt);
						\draw[color=black] (-1.4,1.4) node {\contour{white}{$m^{24}$}};
						\draw [fill=black] (-3.3,2.81) circle (2.5pt);
						\draw[color=black] (-3.2,3.4) node {\contour{white}{$m^{14}$}};
						\draw [fill=black] (1.06,2.45) circle (2.5pt);
						\draw[color=black] (1.5,3.1) node {\contour{white}{$m^{34}$}};
						\draw [fill=black] (-0.42,-0.41) circle (2.5pt);
						\draw[color=black] (-0.1,0.2) node {\contour{white}{$m^{13}$}};
						\draw [fill=black] (-1.39,0.23) circle (2.5pt);
						\draw[color=black,fill=white] (-1.262234835279858,0.5809165742649476) node {\contour{white}{$h$}};
					\end{scriptsize}
				\end{tikzpicture}
				
				4 of this type.
				
				\begin{tikzpicture}[line cap=round,line join=round,>=triangle 45,x=1cm,y=1cm,scale=0.32]
					\fill[color=orange,fill=orange,fill opacity=0.1] (-1.39,0.23) -- (-2.36,0.87) -- (-3.84,-1.99) -- cycle;
					\fill[color=orange,fill=orange,fill opacity=0.1] (-1.39,0.23) -- (-3.84,-1.99) -- (-2.9,-3.93) -- cycle;
					\fill[color=orange,fill=orange,fill opacity=0.1] (-1.39,0.23) -- (-2.36,0.87) -- (-2.9,-3.93) -- cycle;
					\fill[color=orange,fill=orange,fill opacity=0.1] (-3.84,-1.99) -- (-2.9,-3.93) -- (-2.36,0.87) -- cycle;
					\draw [color=wrwrwr] (-4.78,-0.05)-- (-2.9,-3.93);
					\draw [color=wrwrwr] (-2.9,-3.93)-- (3.94,-0.77);
					\draw [dash pattern=on 1pt off 1pt,color=wrwrwr] (-4.78,-0.05)-- (3.94,-0.77);
					\draw [color=wrwrwr] (-4.78,-0.05)-- (-1.82,5.67);
					\draw [color=wrwrwr] (-1.82,5.67)-- (-2.9,-3.93);
					\draw [color=wrwrwr] (-1.82,5.67)-- (3.94,-0.77);
					\draw [color=wrwrwr] (-3.3,2.81)-- (-3.84,-1.99);
					\draw [color=wrwrwr] (-2.36,0.87)-- (-3.3,2.81);
					\draw [color=wrwrwr] (-3.84,-1.99)-- (-2.36,0.87);
					\draw [color=wrwrwr] (-2.36,0.87)-- (0.52,-2.35);
					\draw [color=wrwrwr] (0.52,-2.35)-- (1.06,2.45);
					\draw [color=wrwrwr] (1.06,2.45)-- (-2.36,0.87);
					\draw [dash pattern=on 1pt off 1pt,color=wrwrwr] (-0.42,-0.41)-- (-3.84,-1.99);
					\draw [dash pattern=on 1pt off 1pt,color=wrwrwr] (-3.84,-1.99)-- (0.52,-2.35);
					\draw [dash pattern=on 1pt off 1pt,color=wrwrwr] (0.52,-2.35)-- (-0.42,-0.41);
					\draw [dash pattern=on 1pt off 1pt,color=wrwrwr] (-0.42,-0.41)-- (-3.3,2.81);
					\draw [dash pattern=on 1pt off 1pt,color=wrwrwr] (-3.3,2.81)-- (1.06,2.45);
					\draw [dash pattern=on 1pt off 1pt,color=wrwrwr] (1.06,2.45)-- (-0.42,-0.41);
					\draw [dash pattern=on 1pt off 1pt,color=wrwrwr] (-1.39,0.23)-- (-4.78,-0.05);
					\draw [dash pattern=on 1pt off 1pt,color=wrwrwr] (-1.39,0.23)-- (-2.36,0.87);
					\draw [dash pattern=on 1pt off 1pt,color=wrwrwr] (-1.39,0.23)-- (-0.42,-0.41);
					\draw [dash pattern=on 1pt off 1pt,color=wrwrwr] (-1.39,0.23)-- (-2.9,-3.93);
					\draw [dash pattern=on 1pt off 1pt,color=wrwrwr] (-1.39,0.23)-- (0.52,-2.35);
					\draw [dash pattern=on 1pt off 1pt,color=wrwrwr] (-1.39,0.23)-- (1.06,2.45);
					\draw [dash pattern=on 1pt off 1pt,color=wrwrwr] (-1.39,0.23)-- (-1.82,5.67);
					\draw [dash pattern=on 1pt off 1pt,color=wrwrwr] (-1.39,0.23)-- (3.94,-0.77);
					\draw [dash pattern=on 1pt off 1pt,color=wrwrwr] (-1.39,0.23)-- (-3.3,2.81);
					
					\draw [color=orange] (-1.39,0.23)-- (-2.36,0.87);
					\draw [color=orange] (-2.36,0.87)-- (-3.84,-1.99);
					\draw [dash pattern=on 1pt off 1pt,color=orange] (-3.84,-1.99)-- (-1.39,0.23);
					\draw [color=orange] (-3.84,-1.99)-- (-2.9,-3.93);
					\draw [color=orange] (-2.9,-3.93)-- (-1.39,0.23);
					\draw [color=orange] (-2.36,0.87)-- (-2.9,-3.93);
					\begin{scriptsize}
						\draw [fill=black] (-4.78,-0.05) circle (2.5pt);
						\draw[color=black] (-5,0.3) node {$a^1$};
						\draw [fill=black] (-2.9,-3.93) circle (2.5pt);
						\draw[color=black] (-2.7,-3.6) node[below] {$a^2$};
						\draw [fill=black] (3.94,-0.77) circle (2.5pt);
						\draw[color=black] (4.5,-0.4) node {$a^3$};
						\draw [fill=black] (-1.82,5.67) circle (2.5pt);
						\draw[color=black] (-1.5,6.2) node {$a^4$};
						\draw [fill=black] (-3.84,-1.99) circle (2.5pt);
						\draw[color=black] (-4.5,-2.6) node {$m^{12}$};
						\draw [fill=black] (0.52,-2.35) circle (2.5pt);
						\draw[color=black] (1.2,-2.5) node {$m^{23}$};
						\draw [fill=black] (-2.36,0.87) circle (2.5pt);
						\draw[color=black] (-1.4,1.4) node {\contour{white}{$m^{24}$}};
						\draw [fill=black] (-3.3,2.81) circle (2.5pt);
						\draw[color=black] (-3.2,3.4) node {\contour{white}{$m^{14}$}};
						\draw [fill=black] (1.06,2.45) circle (2.5pt);
						\draw[color=black] (1.5,3.1) node {\contour{white}{$m^{34}$}};
						\draw [fill=black] (-0.42,-0.41) circle (2.5pt);
						\draw[color=black] (-0.1,0.2) node {\contour{white}{$m^{13}$}};
						\draw [fill=black] (-1.39,0.23) circle (2.5pt);
						\draw[color=black,fill=white] (-1.262234835279858,0.5809165742649476) node {\contour{white}{$h$}};
					\end{scriptsize}
				\end{tikzpicture}
				
				12 of this type.
			\end{center}
			\subcaption[t]{Case~\ref{equation:hb_thick_te-interlaces_cas1}.}
			\label{fig:first}
		\end{subfigure}
		\hfill
		\vrule
		\begin{subfigure}{0.66\textwidth}
			\begin{center}
				\definecolor{wrwrwr}{rgb}{0,0,0}
				\definecolor{blue}{rgb}{0.08235294117647059,0.396078431372549,0.7529411764705882}
				\definecolor{orange}{rgb}{0.7803921568627451,0.3137254901960784,0}
				\definecolor{green}{HTML}{6CB359}
				\definecolor{blue}{rgb}{0.08235294117647059,0.396078431372549,0.7529411764705882}
				\definecolor{purple}{rgb}{0.6,0.2,1}
				\begin{tabular}{cc}
					\begin{tikzpicture}[line cap=round,line join=round,>=triangle 45,x=1cm,y=1cm,scale=0.32]
						\fill[color=blue,fill=blue,fill opacity=0.10000000149011612] (-1.893333333333333,-1.1566666666666667) -- (-3.84,-1.99) -- (-2.9,-3.93) -- cycle;
						\fill[color=blue,fill=blue,fill opacity=0.10000000149011612] (-3.84,-1.99) -- (0.52,-2.35) -- (-2.9,-3.93) -- cycle;
						\fill[color=blue,fill=blue,fill opacity=0.10000000149011612] (-1.893333333333333,-1.1566666666666667) -- (-3.84,-1.99) -- (0.52,-2.35) -- cycle;
						\fill[color=blue,fill=blue,fill opacity=0.10000000149011612] (-1.893333333333333,-1.1566666666666667) -- (-2.9,-3.93) -- (0.52,-2.35) -- cycle;
						\draw [color=wrwrwr] (-4.78,-0.05)-- (-2.9,-3.93);
						\draw [color=wrwrwr] (-2.9,-3.93)-- (3.94,-0.77);
						\draw [dash pattern=on 1pt off 1pt,color=wrwrwr] (-4.78,-0.05)-- (3.94,-0.77);
						\draw [color=wrwrwr] (-4.78,-0.05)-- (-1.82,5.67);
						\draw [color=wrwrwr] (-1.82,5.67)-- (-2.9,-3.93);
						\draw [color=wrwrwr] (-1.82,5.67)-- (3.94,-0.77);
						\draw [color=wrwrwr] (-3.3,2.81)-- (-3.84,-1.99);
						\draw [color=wrwrwr] (-2.36,0.87)-- (-3.3,2.81);
						\draw [color=wrwrwr] (-3.84,-1.99)-- (-2.36,0.87);
						\draw [color=wrwrwr] (-2.36,0.87)-- (0.52,-2.35);
						\draw [color=wrwrwr] (0.52,-2.35)-- (1.06,2.45);
						\draw [color=wrwrwr] (1.06,2.45)-- (-2.36,0.87);
						\draw [dash pattern=on 1pt off 1pt,color=wrwrwr] (-0.42,-0.41)-- (-3.84,-1.99);
						\draw [dash pattern=on 1pt off 1pt,color=wrwrwr] (0.52,-2.35)-- (-0.42,-0.41);
						\draw [dash pattern=on 1pt off 1pt,color=wrwrwr] (-0.42,-0.41)-- (-3.3,2.81);
						\draw [dash pattern=on 1pt off 1pt,color=wrwrwr] (-3.3,2.81)-- (1.06,2.45);
						\draw [dash pattern=on 1pt off 1pt,color=wrwrwr] (1.06,2.45)-- (-0.42,-0.41);
						\draw [dash pattern=on 1pt off 1pt,color=wrwrwr] (-2.52,0.13666666666666666)-- (-4.78,-0.05);
						\draw [dash pattern=on 1pt off 1pt,color=wrwrwr] (-1.893333333333333,-1.1566666666666667)-- (-2.9,-3.93);
						\draw [dash pattern=on 1pt off 1pt,color=wrwrwr] (-1.893333333333333,-1.1566666666666667)-- (-3.84,-1.99);
						\draw [dash pattern=on 1pt off 1pt,color=wrwrwr] (-1.893333333333333,-1.1566666666666667)-- (0.52,-2.35);
						\draw [dash pattern=on 1pt off 1pt,color=wrwrwr] (-1.893333333333333,-1.1566666666666667)-- (-2.36,0.87);
						\draw [dash pattern=on 1pt off 1pt,color=wrwrwr] (-2.52,0.13666666666666666)-- (-3.3,2.81);
						\draw [dash pattern=on 1pt off 1pt,color=wrwrwr] (-2.52,0.13666666666666666)-- (-3.84,-1.99);
						\draw [dash pattern=on 1pt off 1pt,color=wrwrwr] (-2.52,0.13666666666666666)-- (-0.42,-0.41);
						\draw [dash pattern=on 1pt off 1pt,color=wrwrwr] (-2.36,0.87)-- (-1.5333333333333332,2.043333333333333);
						\draw [dash pattern=on 1pt off 1pt,color=wrwrwr] (-1.5333333333333332,2.043333333333333)-- (1.06,2.45);
						\draw [dash pattern=on 1pt off 1pt,color=wrwrwr] (-1.5333333333333332,2.043333333333333)-- (-3.3,2.81);
						\draw [dash pattern=on 1pt off 1pt,color=wrwrwr] (-1.5333333333333332,2.043333333333333)-- (-1.82,5.67);
						\draw [dash pattern=on 1pt off 1pt,color=wrwrwr] (1.06,2.45)-- (0.3866666666666666,-0.1033333333333333);
						\draw [dash pattern=on 1pt off 1pt,color=wrwrwr] (0.3866666666666666,-0.1033333333333333)-- (-0.42,-0.41);
						\draw [dash pattern=on 1pt off 1pt,color=wrwrwr] (0.3866666666666666,-0.1033333333333333)-- (0.52,-2.35);
						\draw [dash pattern=on 1pt off 1pt,color=wrwrwr] (0.3866666666666666,-0.1033333333333333)-- (3.94,-0.77);
						\draw [dash pattern=on 1pt off 1pt,color=wrwrwr] (-2.36,0.87)-- (-0.42,-0.41);
						\draw [dash pattern=on 1pt off 1pt,color=wrwrwr] (-2.36,0.87)-- (0.3866666666666666,-0.1033333333333333);
						\draw [dash pattern=on 1pt off 1pt,color=wrwrwr] (-0.42,-0.41)-- (-1.5333333333333332,2.043333333333333);
						\draw [dash pattern=on 1pt off 1pt,color=wrwrwr] (-0.42,-0.41)-- (-1.893333333333333,-1.1566666666666667);
						\draw [dash pattern=on 1pt off 1pt,color=wrwrwr] (-2.36,0.87)-- (-2.52,0.13666666666666666);
						\draw [color=blue] (-1.893333333333333,-1.1566666666666667)-- (-3.84,-1.99);
						\draw [color=blue] (-3.84,-1.99)-- (-2.9,-3.93);
						\draw [color=blue,dash pattern=on 1pt off 1pt] (-3.84,-1.99)-- (0.52,-2.35);
						
						\draw [color=wrwrwr,dash pattern=on 1pt off 1pt] (-2.52,0.13666666666666666)-- (-1.893333333333333,-1.1566666666666667);
						\draw [color=wrwrwr,dash pattern=on 1pt off 1pt] (-1.893333333333333,-1.1566666666666667)--(0.3866666666666666,-0.1033333333333333);
						\draw [color=wrwrwr,dash pattern=on 1pt off 1pt] (0.3866666666666666,-0.1033333333333333)-- (-1.5333333333333332,2.043333333333333);
						\draw [color=wrwrwr,dash pattern=on 1pt off 1pt] (-1.5333333333333332,2.043333333333333)-- (-2.52,0.13666666666666666);
						
						\draw [color=blue] (0.52,-2.35)-- (-2.9,-3.93);
						\draw [color=blue] (0.52,-2.35)-- (-1.893333333333333,-1.1566666666666667);
						\draw [color=blue] (-1.893333333333333,-1.1566666666666667)-- (-2.9,-3.93);
						\begin{scriptsize}
							\draw [fill=black] (-4.78,-0.05) circle (2.5pt);
							\draw[color=black] (-5,0.3) node {$a^1$};
							\draw [fill=black] (-2.9,-3.93) circle (2.5pt);
							\draw[color=black] (-2.7,-3.6) node[below] {$a^2$};
							\draw [fill=black] (3.94,-0.77) circle (2.5pt);
							\draw[color=black] (4.5,-0.4) node {$a^3$};
							\draw [fill=black] (-1.82,5.67) circle (2.5pt);
							\draw[color=black] (-1.5,6.2) node {$a^4$};
							\draw [fill=black] (-3.84,-1.99) circle (2.5pt);
							\draw[color=black] (-4.5,-2.6) node {$m^{12}$};
							\draw [fill=black] (0.52,-2.35) circle (2.5pt);
							\draw[color=black] (1.2,-2.5) node {$m^{23}$};
							\draw [fill=black] (-2.36,0.87) circle (2.5pt);
							\draw[color=black] (-1.4,1.4) node {\contour{white}{$m^{24}$}};
							\draw [fill=black] (-3.3,2.81) circle (2.5pt);
							\draw[color=black] (-3.2,3.4) node {\contour{white}{$m^{14}$}};
							\draw [fill=black] (1.06,2.45) circle (2.5pt);
							\draw[color=black] (1.5,3.1) node {\contour{white}{$m^{34}$}};
							\draw [fill=black] (-0.42,-0.41) circle (2.5pt);
							\draw[color=black] (-0.1,0.2) node {\contour{white}{$m^{13}$}};
							\draw [fill=black] (-2.52,0.13666666666666666) circle (2pt);
							\draw[color=black] (-2.2,-0.2) node {\contour{white}{$h^1$}};
							\draw [fill=black] (0.3866666666666666,-0.1033333333333333) circle (2pt);
							\draw[color=black] (01,-0.4) node {\contour{white}{$h^3$}};
							\draw [fill=black] (-1.893333333333333,-1.1566666666666667) circle (2pt);
							\draw[color=black] (-1.6,-1.6) node {\contour{white}{$h^2$}};
							\draw [fill=black] (-1.5333333333333332,2.043333333333333) circle (2pt);
							\draw[color=black] (-0.9,2.6) node {\contour{white}{$h^4$}};
						\end{scriptsize}
					\end{tikzpicture} & \begin{tikzpicture}[line cap=round,line join=round,>=triangle 45,x=1cm,y=1cm,scale=0.32]
						\fill[color=green,fill=green,fill opacity=0.1] (0.3866666666666666,-0.1033333333333333) -- (-1.5333333333333332,2.043333333333333) -- (1.06,2.45) -- cycle;
						\fill[color=green,fill=green,fill opacity=0.1] (-1.5333333333333332,2.043333333333333) -- (1.06,2.45) -- (-2.36,0.87) -- cycle;
						\fill[color=green,fill=green,fill opacity=0.1] (0.3866666666666666,-0.1033333333333333) -- (-1.5333333333333332,2.043333333333333) -- (-2.36,0.87) -- cycle;
						
						\fill[color=green,fill=green,fill opacity=0.1] (-2.36,0.87) -- (0.3866666666666666,-0.1033333333333333) -- (1.06,2.45) -- cycle;
						
						\draw [color=wrwrwr] (-4.78,-0.05)-- (-2.9,-3.93);
						\draw [color=wrwrwr] (-2.9,-3.93)-- (3.94,-0.77);
						\draw [dash pattern=on 1pt off 1pt,color=wrwrwr] (-4.78,-0.05)-- (3.94,-0.77);
						\draw [color=wrwrwr] (-4.78,-0.05)-- (-1.82,5.67);
						\draw [color=wrwrwr] (-1.82,5.67)-- (-2.9,-3.93);
						\draw [color=wrwrwr] (-1.82,5.67)-- (3.94,-0.77);
						\draw [color=wrwrwr] (-3.3,2.81)-- (-3.84,-1.99);
						\draw [color=wrwrwr] (-2.36,0.87)-- (-3.3,2.81);
						\draw [color=wrwrwr] (-3.84,-1.99)-- (-2.36,0.87);
						\draw [color=wrwrwr] (-2.36,0.87)-- (0.52,-2.35);
						\draw [color=wrwrwr] (0.52,-2.35)-- (1.06,2.45);
						\draw [color=wrwrwr] (1.06,2.45)-- (-2.36,0.87);
						\draw [dash pattern=on 1pt off 1pt,color=wrwrwr] (-0.42,-0.41)-- (-3.84,-1.99);
						\draw [dash pattern=on 1pt off 1pt,color=wrwrwr] (-3.84,-1.99)-- (0.52,-2.35);
						\draw [dash pattern=on 1pt off 1pt,color=wrwrwr] (0.52,-2.35)-- (-0.42,-0.41);
						\draw [dash pattern=on 1pt off 1pt,color=wrwrwr] (-0.42,-0.41)-- (-3.3,2.81);
						\draw [dash pattern=on 1pt off 1pt,color=wrwrwr] (-3.3,2.81)-- (1.06,2.45);
						\draw [dash pattern=on 1pt off 1pt,color=wrwrwr] (1.06,2.45)-- (-0.42,-0.41);
						\draw [dash pattern=on 1pt off 1pt,color=wrwrwr] (-2.52,0.13666666666666666)-- (-4.78,-0.05);
						\draw [dash pattern=on 1pt off 1pt,color=wrwrwr] (-1.893333333333333,-1.1566666666666667)-- (-2.9,-3.93);
						\draw [dash pattern=on 1pt off 1pt,color=wrwrwr] (-1.893333333333333,-1.1566666666666667)-- (-3.84,-1.99);
						\draw [dash pattern=on 1pt off 1pt,color=wrwrwr] (-1.893333333333333,-1.1566666666666667)-- (0.52,-2.35);
						\draw [dash pattern=on 1pt off 1pt,color=wrwrwr] (-1.893333333333333,-1.1566666666666667)-- (-2.36,0.87);
						\draw [dash pattern=on 1pt off 1pt,color=wrwrwr] (-2.52,0.13666666666666666)-- (-3.3,2.81);
						\draw [dash pattern=on 1pt off 1pt,color=wrwrwr] (-2.52,0.13666666666666666)-- (-3.84,-1.99);
						\draw [dash pattern=on 1pt off 1pt,color=wrwrwr] (-2.52,0.13666666666666666)-- (-0.42,-0.41);
						\draw [dash pattern=on 1pt off 1pt,color=wrwrwr] (-2.36,0.87)-- (-1.5333333333333332,2.043333333333333);
						\draw [dash pattern=on 1pt off 1pt,color=wrwrwr] (-1.5333333333333332,2.043333333333333)-- (1.06,2.45);
						\draw [dash pattern=on 1pt off 1pt,color=wrwrwr] (-1.5333333333333332,2.043333333333333)-- (-3.3,2.81);
						\draw [dash pattern=on 1pt off 1pt,color=wrwrwr] (-1.5333333333333332,2.043333333333333)-- (-1.82,5.67);
						\draw [dash pattern=on 1pt off 1pt,color=wrwrwr] (1.06,2.45)-- (0.3866666666666666,-0.1033333333333333);
						\draw [dash pattern=on 1pt off 1pt,color=wrwrwr] (0.3866666666666666,-0.1033333333333333)-- (-0.42,-0.41);
						\draw [dash pattern=on 1pt off 1pt,color=wrwrwr] (0.3866666666666666,-0.1033333333333333)-- (0.52,-2.35);
						\draw [dash pattern=on 1pt off 1pt,color=wrwrwr] (0.3866666666666666,-0.1033333333333333)-- (3.94,-0.77);
						\draw [dash pattern=on 1pt off 1pt,color=wrwrwr] (-2.36,0.87)-- (-0.42,-0.41);
						\draw [dash pattern=on 1pt off 1pt,color=wrwrwr] (-2.36,0.87)-- (0.3866666666666666,-0.1033333333333333);
						\draw [dash pattern=on 1pt off 1pt,color=wrwrwr] (-0.42,-0.41)-- (-1.5333333333333332,2.043333333333333);
						\draw [dash pattern=on 1pt off 1pt,color=wrwrwr] (-2.36,0.87)-- (-2.52,0.13666666666666666);
						\draw [color=wrwrwr,dash pattern=on 1pt off 1pt] (-2.52,0.13666666666666666)-- (-1.893333333333333,-1.1566666666666667);
						\draw [color=wrwrwr,dash pattern=on 1pt off 1pt] (-1.893333333333333,-1.1566666666666667)--(0.3866666666666666,-0.1033333333333333);
						\draw [color=wrwrwr,dash pattern=on 1pt off 1pt] (0.3866666666666666,-0.1033333333333333)-- (-1.5333333333333332,2.043333333333333);
						\draw [color=wrwrwr,dash pattern=on 1pt off 1pt] (-1.5333333333333332,2.043333333333333)-- (-2.52,0.13666666666666666);

						\draw [color=green,dash pattern=on 1pt off 1pt] (0.3866666666666666,-0.1033333333333333)-- (-1.5333333333333332,2.043333333333333);
						\draw [color=green] (0.3866666666666666,-0.1033333333333333)--(1.06,2.45);
						\draw [color=green] (0.3866666666666666,-0.1033333333333333)-- (-2.36,0.87);
						\draw [color=green] (-1.5333333333333332,2.043333333333333)--(1.06,2.45);
						\draw [color=green] (-1.5333333333333332,2.043333333333333)-- (-2.36,0.87);
						\draw [color=green] (1.06,2.45)-- (-2.36,0.87);

						\begin{scriptsize}
							\draw [fill=black] (-4.78,-0.05) circle (2.5pt);
							\draw[color=black] (-5,0.3) node {$a^1$};
							\draw [fill=black] (-2.9,-3.93) circle (2.5pt);
							\draw[color=black] (-2.7,-3.6) node[below] {$a^2$};
							\draw [fill=black] (3.94,-0.77) circle (2.5pt);
							\draw[color=black] (4.5,-0.4) node {$a^3$};
							\draw [fill=black] (-1.82,5.67) circle (2.5pt);
							\draw[color=black] (-1.5,6.2) node {$a^4$};
							\draw [fill=black] (-3.84,-1.99) circle (2.5pt);
							\draw[color=black] (-4.5,-2.6) node {$m^{12}$};
							\draw [fill=black] (0.52,-2.35) circle (2.5pt);
							\draw[color=black] (1.2,-2.5) node {$m^{23}$};
							\draw [fill=black] (-2.36,0.87) circle (2.5pt);
							\draw[color=black] (-1.4,1.4) node {\contour{white}{$m^{24}$}};
							\draw [fill=black] (-3.3,2.81) circle (2.5pt);
							\draw[color=black] (-3.2,3.4) node {\contour{white}{$m^{14}$}};
							\draw [fill=black] (1.06,2.45) circle (2.5pt);
							\draw[color=black] (1.5,3.1) node {\contour{white}{$m^{34}$}};
							\draw [fill=black] (-0.42,-0.41) circle (2.5pt);
							\draw[color=black] (-0.1,0.2) node {\contour{white}{$m^{13}$}};
							\draw [fill=black] (-2.52,0.13666666666666666) circle (2pt);
							\draw[color=black] (-2.2,-0.2) node {\contour{white}{$h^1$}};
							\draw [fill=black] (0.3866666666666666,-0.1033333333333333) circle (2pt);
							\draw[color=black] (01,-0.4) node {\contour{white}{$h^3$}};
							\draw [fill=black] (-1.893333333333333,-1.1566666666666667) circle (2pt);
							\draw[color=black] (-1.6,-1.6) node {\contour{white}{$h^2$}};
							\draw [fill=black] (-1.5333333333333332,2.043333333333333) circle (2pt);
							\draw[color=black] (-0.9,2.6) node {\contour{white}{$h^4$}};
						\end{scriptsize}
					\end{tikzpicture} \\
					12 of this type. & 8 of this type. 
				\end{tabular}
				\begin{tabular}{cc}
					\begin{tikzpicture}[line cap=round,line join=round,>=triangle 45,x=1cm,y=1cm,scale=0.32]
						\fill[color=orange,fill=orange,fill opacity=0.1] (0.3866666666666666,-0.1033333333333333) -- (-1.5333333333333332,2.043333333333333) -- (-0.42,-0.41) -- cycle;
						\fill[color=orange,fill=orange,fill opacity=0.1] (-1.5333333333333332,2.043333333333333) -- (-0.42,-0.41) -- (-2.36,0.87) -- cycle;
						\fill[color=orange,fill=orange,fill opacity=0.1] (0.3866666666666666,-0.1033333333333333) -- (-1.5333333333333332,2.043333333333333) -- (-2.36,0.87) -- cycle;
						
						\fill[color=orange,fill=orange,fill opacity=0.1] (-2.36,0.87) -- (0.3866666666666666,-0.1033333333333333) -- (-0.42,-0.41) -- cycle;
						
						\draw [color=wrwrwr] (-4.78,-0.05)-- (-2.9,-3.93);
						\draw [color=wrwrwr] (-2.9,-3.93)-- (3.94,-0.77);
						\draw [dash pattern=on 1pt off 1pt,color=wrwrwr] (-4.78,-0.05)-- (3.94,-0.77);
						\draw [color=wrwrwr] (-4.78,-0.05)-- (-1.82,5.67);
						\draw [color=wrwrwr] (-1.82,5.67)-- (-2.9,-3.93);
						\draw [color=wrwrwr] (-1.82,5.67)-- (3.94,-0.77);
						\draw [color=wrwrwr] (-3.3,2.81)-- (-3.84,-1.99);
						\draw [color=wrwrwr] (-2.36,0.87)-- (-3.3,2.81);
						\draw [color=wrwrwr] (-3.84,-1.99)-- (-2.36,0.87);
						\draw [color=wrwrwr] (-2.36,0.87)-- (0.52,-2.35);
						\draw [color=wrwrwr] (0.52,-2.35)-- (1.06,2.45);
						\draw [color=wrwrwr] (1.06,2.45)-- (-2.36,0.87);
						\draw [dash pattern=on 1pt off 1pt,color=wrwrwr] (-0.42,-0.41)-- (-3.84,-1.99);
						\draw [dash pattern=on 1pt off 1pt,color=wrwrwr] (-3.84,-1.99)-- (0.52,-2.35);
						\draw [dash pattern=on 1pt off 1pt,color=wrwrwr] (0.52,-2.35)-- (-0.42,-0.41);
						\draw [dash pattern=on 1pt off 1pt,color=wrwrwr] (-0.42,-0.41)-- (-3.3,2.81);
						\draw [dash pattern=on 1pt off 1pt,color=wrwrwr] (-3.3,2.81)-- (1.06,2.45);
						\draw [dash pattern=on 1pt off 1pt,color=wrwrwr] (1.06,2.45)-- (-0.42,-0.41);
						\draw [dash pattern=on 1pt off 1pt,color=wrwrwr] (-2.52,0.13666666666666666)-- (-4.78,-0.05);
						\draw [dash pattern=on 1pt off 1pt,color=wrwrwr] (-1.893333333333333,-1.1566666666666667)-- (-2.9,-3.93);
						\draw [dash pattern=on 1pt off 1pt,color=wrwrwr] (-1.893333333333333,-1.1566666666666667)-- (-3.84,-1.99);
						\draw [dash pattern=on 1pt off 1pt,color=wrwrwr] (-1.893333333333333,-1.1566666666666667)-- (0.52,-2.35);
						\draw [dash pattern=on 1pt off 1pt,color=wrwrwr] (-1.893333333333333,-1.1566666666666667)-- (-2.36,0.87);
						\draw [dash pattern=on 1pt off 1pt,color=wrwrwr] (-2.52,0.13666666666666666)-- (-3.3,2.81);
						\draw [dash pattern=on 1pt off 1pt,color=wrwrwr] (-2.52,0.13666666666666666)-- (-3.84,-1.99);
						\draw [dash pattern=on 1pt off 1pt,color=wrwrwr] (-2.52,0.13666666666666666)-- (-0.42,-0.41);
						\draw [dash pattern=on 1pt off 1pt,color=wrwrwr] (-2.36,0.87)-- (-1.5333333333333332,2.043333333333333);
						\draw [dash pattern=on 1pt off 1pt,color=wrwrwr] (-1.5333333333333332,2.043333333333333)-- (1.06,2.45);
						\draw [dash pattern=on 1pt off 1pt,color=wrwrwr] (-1.5333333333333332,2.043333333333333)-- (-3.3,2.81);
						\draw [dash pattern=on 1pt off 1pt,color=wrwrwr] (-1.5333333333333332,2.043333333333333)-- (-1.82,5.67);
						\draw [dash pattern=on 1pt off 1pt,color=wrwrwr] (1.06,2.45)-- (0.3866666666666666,-0.1033333333333333);
						\draw [dash pattern=on 1pt off 1pt,color=wrwrwr] (0.3866666666666666,-0.1033333333333333)-- (-0.42,-0.41);
						\draw [dash pattern=on 1pt off 1pt,color=wrwrwr] (0.3866666666666666,-0.1033333333333333)-- (0.52,-2.35);
						\draw [dash pattern=on 1pt off 1pt,color=wrwrwr] (0.3866666666666666,-0.1033333333333333)-- (3.94,-0.77);
						\draw [dash pattern=on 1pt off 1pt,color=wrwrwr] (-2.36,0.87)-- (-0.42,-0.41);
						\draw [dash pattern=on 1pt off 1pt,color=wrwrwr] (-2.36,0.87)-- (0.3866666666666666,-0.1033333333333333);
						\draw [dash pattern=on 1pt off 1pt,color=wrwrwr] (-0.42,-0.41)-- (-1.5333333333333332,2.043333333333333);
						\draw [dash pattern=on 1pt off 1pt,color=wrwrwr] (-2.36,0.87)-- (-2.52,0.13666666666666666);
						\draw [color=wrwrwr,dash pattern=on 1pt off 1pt] (-2.52,0.13666666666666666)-- (-1.893333333333333,-1.1566666666666667);
						\draw [color=wrwrwr,dash pattern=on 1pt off 1pt] (-1.893333333333333,-1.1566666666666667)--(0.3866666666666666,-0.1033333333333333);
						\draw [color=wrwrwr,dash pattern=on 1pt off 1pt] (0.3866666666666666,-0.1033333333333333)-- (-1.5333333333333332,2.043333333333333);
						\draw [color=wrwrwr,dash pattern=on 1pt off 1pt] (-1.5333333333333332,2.043333333333333)-- (-2.52,0.13666666666666666);

						\draw [color=orange] (0.3866666666666666,-0.1033333333333333)-- (-1.5333333333333332,2.043333333333333);
						\draw [color=orange] (0.3866666666666666,-0.1033333333333333)--(-0.42,-0.41);
						\draw [color=orange] (0.3866666666666666,-0.1033333333333333)-- (-2.36,0.87);
						\draw [color=orange,dash pattern=on 1pt off 1pt] (-1.5333333333333332,2.043333333333333)--(-0.42,-0.41);
						\draw [color=orange] (-1.5333333333333332,2.043333333333333)-- (-2.36,0.87);
						\draw [color=orange] (-0.42,-0.41)-- (-2.36,0.87);

						\begin{scriptsize}
							\draw [fill=black] (-4.78,-0.05) circle (2.5pt);
							\draw[color=black] (-5,0.3) node {$a^1$};
							\draw [fill=black] (-2.9,-3.93) circle (2.5pt);
							\draw[color=black] (-2.7,-3.6) node[below] {$a^2$};
							\draw [fill=black] (3.94,-0.77) circle (2.5pt);
							\draw[color=black] (4.5,-0.4) node {$a^3$};
							\draw [fill=black] (-1.82,5.67) circle (2.5pt);
							\draw[color=black] (-1.5,6.2) node {$a^4$};
							\draw [fill=black] (-3.84,-1.99) circle (2.5pt);
							\draw[color=black] (-4.5,-2.6) node {$m^{12}$};
							\draw [fill=black] (0.52,-2.35) circle (2.5pt);
							\draw[color=black] (1.2,-2.5) node {$m^{23}$};
							\draw [fill=black] (-2.36,0.87) circle (2.5pt);
							\draw[color=black] (-1.4,1.4) node {\contour{white}{$m^{24}$}};
							\draw [fill=black] (-3.3,2.81) circle (2.5pt);
							\draw[color=black] (-3.2,3.4) node {\contour{white}{$m^{14}$}};
							\draw [fill=black] (1.06,2.45) circle (2.5pt);
							\draw[color=black] (1.5,3.1) node {\contour{white}{$m^{34}$}};
							\draw [fill=black] (-0.42,-0.41) circle (2.5pt);
							\draw[color=black] (-0.1,0.2) node {\contour{white}{$m^{13}$}};
							\draw [fill=black] (-2.52,0.13666666666666666) circle (2pt);
							\draw[color=black] (-2.2,-0.2) node {\contour{white}{$h^1$}};
							\draw [fill=black] (0.3866666666666666,-0.1033333333333333) circle (2pt);
							\draw[color=black] (01,-0.4) node {\contour{white}{$h^3$}};
							\draw [fill=black] (-1.893333333333333,-1.1566666666666667) circle (2pt);
							\draw[color=black] (-1.6,-1.6) node {\contour{white}{$h^2$}};
							\draw [fill=black] (-1.5333333333333332,2.043333333333333) circle (2pt);
							\draw[color=black] (-0.9,2.6) node {\contour{white}{$h^4$}};
						\end{scriptsize}
					\end{tikzpicture} & \begin{tikzpicture}[line cap=round,line join=round,>=triangle 45,x=1cm,y=1cm,scale=0.32]
						\fill[color=purple,fill=purple,fill opacity=0.1] (-0.42,-0.41) -- (-3.84,-1.99) -- (0.52,-2.35) -- cycle;
						\fill[color=purple,fill=purple,fill opacity=0.1] (-1.893333333333333,-1.1566666666666667) -- (-0.42,-0.41) -- (-3.84,-1.99) -- cycle;
						\fill[color=purple,fill=purple,fill opacity=0.1] (-1.893333333333333,-1.1566666666666667) -- (-3.84,-1.99) -- (0.52,-2.35) -- cycle;
						\fill[color=purple,fill=purple,fill opacity=0.1] (-1.893333333333333,-1.1566666666666667) -- (0.52,-2.35) -- (-0.42,-0.41) -- cycle;
						\draw [color=wrwrwr] (-4.78,-0.05)-- (-2.9,-3.93);
						\draw [color=wrwrwr] (-2.9,-3.93)-- (3.94,-0.77);
						\draw [dash pattern=on 1pt off 1pt,color=wrwrwr] (-4.78,-0.05)-- (3.94,-0.77);
						\draw [color=wrwrwr] (-4.78,-0.05)-- (-1.82,5.67);
						\draw [color=wrwrwr] (-1.82,5.67)-- (-2.9,-3.93);
						\draw [color=wrwrwr] (-1.82,5.67)-- (3.94,-0.77);
						\draw [color=wrwrwr] (-3.3,2.81)-- (-3.84,-1.99);
						\draw [color=wrwrwr] (-2.36,0.87)-- (-3.3,2.81);
						\draw [color=wrwrwr] (-3.84,-1.99)-- (-2.36,0.87);
						\draw [color=wrwrwr] (-2.36,0.87)-- (0.52,-2.35);
						\draw [color=wrwrwr] (0.52,-2.35)-- (1.06,2.45);
						\draw [color=wrwrwr] (1.06,2.45)-- (-2.36,0.87);
						\draw [dash pattern=on 1pt off 1pt,color=wrwrwr] (-0.42,-0.41)-- (-3.84,-1.99);
						\draw [dash pattern=on 1pt off 1pt,color=wrwrwr] (-3.84,-1.99)-- (0.52,-2.35);
						\draw [dash pattern=on 1pt off 1pt,color=wrwrwr] (0.52,-2.35)-- (-0.42,-0.41);
						\draw [dash pattern=on 1pt off 1pt,color=wrwrwr] (-0.42,-0.41)-- (-3.3,2.81);
						\draw [dash pattern=on 1pt off 1pt,color=wrwrwr] (-3.3,2.81)-- (1.06,2.45);
						\draw [dash pattern=on 1pt off 1pt,color=wrwrwr] (1.06,2.45)-- (-0.42,-0.41);
						\draw [dash pattern=on 1pt off 1pt,color=wrwrwr] (-2.52,0.13666666666666666)-- (-4.78,-0.05);
						\draw [dash pattern=on 1pt off 1pt,color=wrwrwr] (-1.893333333333333,-1.1566666666666667)-- (-2.9,-3.93);
						\draw [dash pattern=on 1pt off 1pt,color=wrwrwr] (-1.893333333333333,-1.1566666666666667)-- (-3.84,-1.99);
						\draw [dash pattern=on 1pt off 1pt,color=wrwrwr] (-1.893333333333333,-1.1566666666666667)-- (0.52,-2.35);
						\draw [dash pattern=on 1pt off 1pt,color=wrwrwr] (-1.893333333333333,-1.1566666666666667)-- (-2.36,0.87);
						\draw [dash pattern=on 1pt off 1pt,color=wrwrwr] (-2.52,0.13666666666666666)-- (-3.3,2.81);
						\draw [dash pattern=on 1pt off 1pt,color=wrwrwr] (-2.52,0.13666666666666666)-- (-3.84,-1.99);
						\draw [dash pattern=on 1pt off 1pt,color=wrwrwr] (-2.52,0.13666666666666666)-- (-0.42,-0.41);
						\draw [dash pattern=on 1pt off 1pt,color=wrwrwr] (-2.36,0.87)-- (-1.5333333333333332,2.043333333333333);
						\draw [dash pattern=on 1pt off 1pt,color=wrwrwr] (-1.5333333333333332,2.043333333333333)-- (1.06,2.45);
						\draw [dash pattern=on 1pt off 1pt,color=wrwrwr] (-1.5333333333333332,2.043333333333333)-- (-3.3,2.81);
						\draw [dash pattern=on 1pt off 1pt,color=wrwrwr] (-1.5333333333333332,2.043333333333333)-- (-1.82,5.67);
						\draw [dash pattern=on 1pt off 1pt,color=wrwrwr] (1.06,2.45)-- (0.3866666666666666,-0.1033333333333333);
						\draw [dash pattern=on 1pt off 1pt,color=wrwrwr] (0.3866666666666666,-0.1033333333333333)-- (-0.42,-0.41);
						\draw [dash pattern=on 1pt off 1pt,color=wrwrwr] (0.3866666666666666,-0.1033333333333333)-- (0.52,-2.35);
						\draw [dash pattern=on 1pt off 1pt,color=wrwrwr] (0.3866666666666666,-0.1033333333333333)-- (3.94,-0.77);
						\draw [dash pattern=on 1pt off 1pt,color=wrwrwr] (-2.36,0.87)-- (-0.42,-0.41);
						\draw [dash pattern=on 1pt off 1pt,color=wrwrwr] (-2.36,0.87)-- (0.3866666666666666,-0.1033333333333333);
						\draw [dash pattern=on 1pt off 1pt,color=wrwrwr] (-0.42,-0.41)-- (-1.5333333333333332,2.043333333333333);
						\draw [dash pattern=on 1pt off 1pt,color=wrwrwr] (-0.42,-0.41)-- (-1.893333333333333,-1.1566666666666667);
						\draw [dash pattern=on 1pt off 1pt,color=wrwrwr] (-2.36,0.87)-- (-2.52,0.13666666666666666);
						\draw [color=purple] (-0.42,-0.41) -- (-3.84,-1.99);
						\draw [color=purple] (-0.42,-0.41) -- (0.52,-2.35);
						\draw [color=purple] (-0.42,-0.41) -- (-1.893333333333333,-1.1566666666666667);
						\draw [color=purple] (-1.893333333333333,-1.1566666666666667)-- (-3.84,-1.99);
						\draw [color=purple] (-1.893333333333333,-1.1566666666666667)-- (0.52,-2.35);
						\draw [color=purple] (-3.84,-1.99) -- (0.52,-2.35);
						\draw [color=wrwrwr,dash pattern=on 1pt off 1pt] (-2.52,0.13666666666666666)-- (-1.893333333333333,-1.1566666666666667);
						\draw [color=wrwrwr,dash pattern=on 1pt off 1pt] (-1.893333333333333,-1.1566666666666667)--(0.3866666666666666,-0.1033333333333333);
						\draw [color=wrwrwr,dash pattern=on 1pt off 1pt] (0.3866666666666666,-0.1033333333333333)-- (-1.5333333333333332,2.043333333333333);
						\draw [color=wrwrwr,dash pattern=on 1pt off 1pt] (-1.5333333333333332,2.043333333333333)-- (-2.52,0.13666666666666666);
						
						\begin{scriptsize}
							\draw [fill=black] (-4.78,-0.05) circle (2.5pt);
							\draw[color=black] (-5,0.3) node {$a^1$};
							\draw [fill=black] (-2.9,-3.93) circle (2.5pt);
							\draw[color=black] (-2.7,-3.6) node[below] {$a^2$};
							\draw [fill=black] (3.94,-0.77) circle (2.5pt);
							\draw[color=black] (4.5,-0.4) node {$a^3$};
							\draw [fill=black] (-1.82,5.67) circle (2.5pt);
							\draw[color=black] (-1.5,6.2) node {$a^4$};
							\draw [fill=black] (-3.84,-1.99) circle (2.5pt);
							\draw[color=black] (-4.5,-2.6) node {$m^{12}$};
							\draw [fill=black] (0.52,-2.35) circle (2.5pt);
							\draw[color=black] (1.2,-2.5) node {$m^{23}$};
							\draw [fill=black] (-2.36,0.87) circle (2.5pt);
							\draw[color=black] (-1.4,1.4) node {\contour{white}{$m^{24}$}};
							\draw [fill=black] (-3.3,2.81) circle (2.5pt);
							\draw[color=black] (-3.2,3.4) node {\contour{white}{$m^{14}$}};
							\draw [fill=black] (1.06,2.45) circle (2.5pt);
							\draw[color=black] (1.5,3.1) node {\contour{white}{$m^{34}$}};
							\draw [fill=black] (-0.42,-0.41) circle (2.5pt);
							\draw[color=black] (-0.1,0.2) node {\contour{white}{$m^{13}$}};
							\draw [fill=black] (-2.52,0.13666666666666666) circle (2pt);
							\draw[color=black] (-2.2,-0.2) node {\contour{white}{$h^1$}};
							\draw [fill=black] (0.3866666666666666,-0.1033333333333333) circle (2pt);
							\draw[color=black] (01,-0.4) node {\contour{white}{$h^3$}};
							\draw [fill=black] (-1.893333333333333,-1.1566666666666667) circle (2pt);
							\draw[color=black] (-1.6,-1.6) node {\contour{white}{$h^2$}};
							\draw [fill=black] (-1.5333333333333332,2.043333333333333) circle (2pt);
							\draw[color=black] (-0.9,2.6) node {\contour{white}{$h^4$}};
						\end{scriptsize}
					\end{tikzpicture} \\
					4 of this type.& 4 of this type.
				\end{tabular}
			\end{center}
			\subcaption[t]{Case~\ref{equation:hb_thick_te-interlaces_cas2}.}
			\label{fig:second}
		\end{subfigure}
		\caption{The regular unimodular Hilbert triangulation in the case $n=4$. We define $m^{ij}=\frac{1}{2}(a^i+a^j)$, for $i\neq j$. Up to symmetry, the cones in the triangulations are generated by the vertices of the colored tetrahedra in each case.}
		\label{figure:triangulation}
	\end{figure}
	\end{proof}

In Section~\ref{section:decomposition}, we raised the question of the existence of a decomposition theorem without the full row rank hypothesis.
One could hope nevertheless that Theorem~\ref{theorem:hb_te-bricks} provides a Hilbert basis in the general case, for instance by applying it to each simplicial subcone, which would be a te-cone. 
This is not the case as the following example shows.
For the cone $C=\cone(M)$ generated by the rows of the matrix $M$ given at the end of Section~\ref{section:decomposition}, this strategy yields four vectors, namely the half-sum of each te-lace:
$$h_1^\top=\begin{bmatrix}
	2&1&1&1
\end{bmatrix},
h_2^\top=\begin{bmatrix}
	1&1&1&2
\end{bmatrix},
h_3^\top=\begin{bmatrix}
	1&1&0&1
\end{bmatrix},
h_4^\top=\begin{bmatrix}
	1&0&1&1
\end{bmatrix}.$$
Yet, $h_1$ and $h_2$ are not Hilbert basis elements of $C$ as $h_1 = h_4+M_1^\top$ and $h_2 = h_3+M_6^\top$.
Independently from the existence of a decomposition theorem, the following question remains open:
\textit{In general, which integer decomposition properties do cones generated by totally equimodular matrices satisfy?}

\section{Proofs of the results of Section~\ref{section:decomposition}: decomposition}\label{section:proofs_decomposition}
We first provide notations and remarks that will be used throughout the next two sections.

Let $A$ be an $m\times n$ matrix.
For subsets $I\subseteq \{1,\dots,m\}$ and $J\subseteq \{1,\dots,n\}$ we write $A_I$ the $|I|\times n$ submatrix of $A$ whose rows are indexed by $I$, $A^J$ the $m\times |J|$ submatrix of $A$ whose columns are indexed by $J$, and $A^J_I=(A_I)^J=(A^J)_I$ the associated $|I|\times|J|$ submatrix of $A$.
Whenever $I$ or $J$ is a singleton, we omit the curly brackets. For instance, choosing of row index $i$ and a column index $j$, we write $A_i=A_{\{i\}}$ the $i$-th row vector of $A$, and $A^j=A^{\{j\}}$ the $j$-th column vector of $A$.
We write $A_{\widehat{i}}=A_{\{1,\ldots,i-1,i+1,\ldots,m\}}$ and $A^{\widehat{j}}=A_{\{1,\ldots,j-1,j+1,\ldots,n\}}$.
Moreover, we will write $A^{<j}=A^{\{1,\ldots,j-1\}}$ and $A^{>j}=A^{\{j+1,\ldots,m\}}$, with $A_{<i}$ and $A_{>i}$ defined similarly.
For a transposition $(ij)$, we define $\Pi_{(ij)}$ the matrix obtained from the identity matrix $\mathbf{I}_n$ by exchanging the coefficients $(i,i)$ and $(j,j)$ with the coefficients $(i,j)$ and $(j,i)$, respectively.

For $a\in \Q^n$, we define $\diag (a)$ the square matrix whose $i$-th diagonal coefficient is $a_i$, and whose other coefficients are zeroes.
When $a$ is $\pm 1$, $\diag(a)$ is called a \emph{signing matrix}.

To simplify notation, we will often directly write $r\in A$ to denote a row $r=A_i$ of $A$. Then, a pivot $(r,j)$ with $j\in \supp(r)$, is the same as the pivot $(i,j)$ defined in Section~\ref{section:def}.
When $A$ is totally equimodular, we denote by $\widehat{A}$ the rescaled version of $A$ obtained by dividing each row $r$ of $A$ by $\eqdet(r)$, which is a $0,\!\pm1$ matrix.



We say that {\em $r$ and $s$ coincide on their common support} when $r_{\supp(r)\cap\supp(s)}=s_{\supp(r)\cap\supp(s)}$.
They are {\em opposites on their common support} when $r$ and $-s$ coincide on their common support.
For two vectors $r$ and $s$ which coincide on they common support, $r\triangle s$ denotes the vector obtained from $r+s$ by setting the coordinates in $\supp(r)\cap \supp(r)$ to $0$.
\medskip

We will use intensively the following easy remarks about full row rank sets of size two, which come from the fact that $\begin{bmatrix}1 & 1\\1 & -1\end{bmatrix}$ is the only $2\times 2$ minimally non-totally unimodular matrix, up to resigning rows and columns:
\begin{itemize}
	\item[\textbullet] $\{r,s\}$ is a tu-set if and only if $\supp(r)\neq\supp(s)$ and $r$ and $s$ either coincide or are opposites on their common support.
	\item[\textbullet] $\{r,s\}$ is a te-lace if and only if $\supp(r)=\supp(s)$ and $r\neq\pm s$.
	\item[\textbullet] In a te-interlace, all the rows have the same support.
	\item[\textbullet] If $r$ and $s$ coincide on their common support and $c\in\supp(r)\cap\supp(s)$, then $s'=\{r,s\}/\!\!/(r,c)= s-r = s\triangle(-r)$.
	Hence $\supp(s')=\supp(s)\triangle\supp(r)$.
	\item[\textbullet] Trimming a te-interlace yields a matrix of the form $2B$, where $B$ can be made $0,\!1$ by resigning rows and columns.
\end{itemize}


%
%


\subsection{Pivoting and trimming in te-bricks}

In this section, we provide preliminary results about the impact of pivoting and trimming on equimodularity and te-bricks.

First, the following is immediate.
\begin{lemma}\label{pivotE}
Let $A$ be a full row rank $m\times n$ matrix and $(i,j)$ be a pivot of $A$.
Then, $A$ is equimodular if and only if $A/(i,j)$ is equimodular.
\end{lemma}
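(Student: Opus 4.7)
The plan is to observe that pivoting is implemented by left multiplication by an invertible matrix, and to track the precise scaling factor this introduces on every $m\times m$ minor.

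More concretely, I would first note that the $p$-pivot operation at $p=(i,j)$ can be written as $A/p = R\cdot A$, where $R$ is the product of two types of elementary row operations: scaling row $i$ by $1/A_i^j$, and then adding a multiple $-A_k^j/A_i^j$ of the (new) row $i$ to each other row $k$. Only the first operation changes the determinant of $R$, so $\det(R) = 1/A_i^j$. In particular $R$ is invertible, and hence $A/p$ has the same row space (and same rank) as $A$. This already takes care of the full row rank condition.

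Next, I would compare $m\times m$ minors. For every choice of $m$ columns $J \subseteq \{1,\dots,n\}$, the Cauchy--Binet or simply multiplicativity of determinants yields
\[
\det\bigl((A/p)^J\bigr) \;=\; \det(R)\cdot \det(A^J) \;=\; \frac{1}{A_i^j}\,\det(A^J).
\]
Consequently $\det((A/p)^J) = 0$ if and only if $\det(A^J) = 0$, and when nonzero, the absolute values of the corresponding minors of $A$ and of $A/p$ differ by exactly the multiplicative constant $1/|A_i^j|$, independent of $J$.

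From this single identity the equivalence is immediate in both directions: all nonzero $m\times m$ minors of $A$ share a common absolute value if and only if the same holds for $A/p$. Since full row rank is also preserved, $A$ is equimodular if and only if $A/p$ is. I do not anticipate any real obstacle here; the only thing to be slightly careful about is making sure the scaling and row-addition steps of the pivot are composed in the right order so that the determinant of $R$ is computed correctly (and in particular that no cancellation from the column-clearing steps alters the scaling factor).
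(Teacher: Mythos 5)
Your proof is correct and takes essentially the same approach as the paper's: the paper's one-line argument is precisely that the row operations of the $(i,j)$-pivot divide all $m\times m$ determinants by $A_i^j$, which is exactly your identity $\det\bigl((A/p)^J\bigr) = \tfrac{1}{A_i^j}\det(A^J)$ obtained from $A/p = R A$ with $\det(R)=1/A_i^j$. You simply spell out the left-multiplication-by-$R$ step and the preservation of rank explicitly.
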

\begin{proof} 
The row operations involved in the $(i,j)$-pivot divide all the $m\times m$ determinants of $A$ by $A_i^j$.
\end{proof}
The previous result has a counterpart in terms of trims.
\begin{lemma}[Lemma~\ref{lemma:trim_E}]\footnote{Here and in the next section, such a reference points to the same statement in the first part.}\label{pivoteqE}
Let $r$ be a $0,\!\pm1$ row of a full row rank $m\times n$ matrix $A$.
Then, $A$ is equimodular if and only if $A/\!\!/(r,j)$ is equimodular for all $j\in\supp(r)$.
\end{lemma}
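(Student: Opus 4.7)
The plan is to reduce both directions to a simple bijection between the nonzero $m\times m$ subdeterminants of $A$ whose column set contains $j$ and the $(m-1)\times(m-1)$ subdeterminants of the trim $A\trim(i,j)$. Fix any $j\in\supp(r)$. Pivoting at $(i,j)$ replaces column $j$ of $A/(i,j)$ by $e_i$ and divides every $m\times m$ determinant by $A_i^j=\pm1$, so for each $(m-1)$-subset $J'\subseteq\{1,\dots,n\}\setminus\{j\}$, cofactor expansion along this new column yields $|\det((A\trim(i,j))^{J'})|=|\det(A^{J'\cup\{j\}})|$. The same pivoting argument shows that $A\trim(i,j)$ has full row rank $m-1$, since in $A/(i,j)$ the rows other than $i$ already have a zero at column $j$ and stay linearly independent once that column is deleted.

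The forward direction is then immediate: if $A$ is equimodular with equideterminant $d$, every nonzero $(m-1)\times(m-1)$ minor of $A\trim(i,j)$ has absolute value $d$ by the bijection, so $A\trim(i,j)$ is equimodular.

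For the converse, I would set $d_j := \eqdet(A\trim(i,j))$ for each $j\in\supp(r)$. The bijection gives $|\det(A^J)|\in\{0,d_j\}$ whenever $j\in J\cap\supp(r)$; if instead $J\cap\supp(r)=\emptyset$, row $i$ of $A^J$ vanishes and $\det(A^J)=0$. Thus it suffices to show that all the $d_j$ coincide, since their common value will then be the absolute value of every nonzero $m\times m$ minor of $A$. To equate $d_{j_1}$ and $d_{j_2}$ for $j_1,j_2\in\supp(r)$, I would distinguish two cases. If $A^{j_1}$ and $A^{j_2}$ are linearly independent, basis extension applied to the columns of $A$ (which span $\R^m$ because $A$ has full row rank) furnishes a set $J\supseteq\{j_1,j_2\}$ of size $m$ with $\det(A^J)\neq0$, and the bijection applied at both $j_1$ and $j_2$ yields $d_{j_1}=|\det(A^J)|=d_{j_2}$. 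If $A^{j_1}$ and $A^{j_2}$ are linearly dependent, then, since both are $\pm1$ at row $i$, one has $A^{j_1}=\pm A^{j_2}$; a short direct computation then shows that $A/(i,j_1)$ and $A/(i,j_2)$ agree on every row $i'\neq i$ and that column $j_2$ (resp.\ $j_1$) becomes $\pm e_i$ in $A/(i,j_1)$ (resp.\ $A/(i,j_2)$). Consequently $A\trim(i,j_1)$ and $A\trim(i,j_2)$ share the same restriction to the columns in $\{1,\dots,n\}\setminus\{j_1,j_2\}$ up to a sign on one row, and since every nonzero maximal minor of either trim comes from this shared submatrix, $d_{j_1}=d_{j_2}$ again follows.

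The main obstacle is precisely this linearly dependent subcase: when $A^{j_1}=\pm A^{j_2}$, no nonsingular $m\times m$ submatrix of $A$ contains both columns $j_1$ and $j_2$, so the equality $d_{j_1}=d_{j_2}$ cannot be transferred through a single determinant of $A$ and must instead be extracted from the direct structural coincidence of the two pivots.
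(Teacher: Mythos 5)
Your proposal is correct and follows essentially the same route as the paper: both directions are reduced to the bijection between nonzero $m\times m$ minors of $A$ containing column~$j$ and nonzero $(m-1)\times(m-1)$ minors of $A\trim(r,j)$, and the converse is handled by splitting into the case $A^{j_1}=\pm A^{j_2}$ (where the two trims coincide up to a zero column) and the linearly independent case (where a common invertible $m\times m$ submatrix containing both columns forces $d_{j_1}=d_{j_2}$). The only cosmetic difference is that you invoke basis extension directly, while the paper phrases the same step as the existence of an invertible submatrix $D$ containing both columns and observes $|\det(D\trim(r,j_1))|=|\det(D)|=|\det(D\trim(r,j_2))|$; the content is identical.
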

\begin{proof}
The ``only if'' part follows from Lemma~\ref{pivotE}, in fact, for $J\sqcup\{j\}\subseteq \{1,\ldots,n\}$ of size $m$, we have $\det\left(\left(A/(r,j)\right)^{J\sqcup\{j\}}\right)=\pm\det\left(\left(A/\!\!/(r,j)\right)^{J}\right)$.

Suppose now that, for all $k\in\supp(r)$, $A/\!\!/(r,k)$ is equimodular, and let $d_k=\eqdet(A/\!\!/(r,k))$.
We have $d_k\neq 0$ for all $k\in\supp(r)$.

Let $i\neq j$ be in $\supp(r)$.
Note that both $A^i$ and $A^j$ are nonzero.
If $A^i=\pm A^j$, then trimming with respect to $(r,i)$ and $(r,j)$ yield the same determinants, up to the sign, thus $d_i = d_j$.
Otherwise, there exists a submatrix $D$ of $A$ containing both $A^i$ and $A^j$ which is invertible.
Let $D_i=D/\!\!/(r,i)$ and $D_j=D/\!\!/(r,j)$.
By definition, $|\det(D_i)|=d_i$ and $|\det(D_j)|=d_j$.
Now, since $r$ is $0,\!\pm1$, and $D_i$ and $D_j$ are respectively trimmed from $D$ by pivoting with respect to $(r,i)$ and $(r,j)$, we have $|\det(D_i)|=|\det(D)|=|\det(D_j)|$.
Thus $d_i=d_j$, and all nonzero $d_k$'s have the same value.

Thoses values are presicely the values of the $n\times n$ nonzero determinants of $A$, thus $A$ is equimodular.
\end{proof}

Lemmas~\ref{pivotE} and~\ref{pivoteqE} imply the following, which is an equivalent formulation of Theorem~\ref{theorem:pivot_TE}.

\begin{theorem}[Theorem~\ref{theorem:pivot_TE}]\label{pivotpreserveTE}
A matrix is totally equimodular if and only if any sequence of pivots and trims yields a totally equimodular matrix.
\end{theorem}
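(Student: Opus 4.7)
The plan is to handle the reverse direction immediately (the empty sequence returns $A$ itself, so if every sequence yields a totally equimodular matrix, then so is $A$), and to reduce the forward direction, by induction on the length of the sequence, to showing that a single pivot and a single trim each preserve total equimodularity. Since $A \trim (i,j)$ is by definition the pivot $A/(i,j)$ with row $i$ and column $j$ deleted, it further suffices to prove three preservation statements separately: pivoting, deleting a single row, and deleting a single column.

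Row deletion is immediate, since any linearly independent row subset of $A_{\widehat{i}}$ is one of $A$. For column deletion, I would argue as follows: if $A$ is totally equimodular and $B = A^{\widehat{j}}$, then for any row set $S$ with $B_S$ of full row rank, $A_S$ also has full row rank and is thus equimodular. The nonzero maximal subdeterminants of $B_S$ coincide with those of $A_S$ avoiding column $j$, so they share a single common absolute value, proving $B_S$ equimodular.

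The core case is pivoting. Setting $A' = A/(i,j)$ and taking a set $S$ with $A'_S$ of full row rank, I would split into two cases. If $i \in S$, the pivot restricted to $A_S$ yields $A'_S = A_S/(i,j)$; invertibility of the pivot ensures $A_S$ has full row rank, hence is equimodular by hypothesis, and Lemma~\ref{pivotE} delivers equimodularity of $A'_S$. If $i \notin S$, column $j$ of $A'_S$ vanishes entirely, so every nonzero maximal subdeterminant of $A'_S$ avoids column $j$. The $1$ in column $j$ of $A'_i$ forces $A'_{S \cup \{i\}}$ to have full row rank $|S|+1$; invertibility of the pivot on the row set $S \cup \{i\}$ then gives $A_{S \cup \{i\}}$ of full row rank, hence equimodular, so Lemma~\ref{pivotE} yields equimodularity of $A'_{S \cup \{i\}}$. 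Expanding along column $j$ matches each nonzero maximal subdeterminant of $A'_S$, up to sign, with a nonzero maximal subdeterminant of $A'_{S \cup \{i\}}$, forcing all such subdeterminants of $A'_S$ to share one common absolute value.

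The main obstacle I anticipate is precisely the case $i \notin S$, since the rows of $A'_S$ involve the row $A_i$ lying outside $A_S$, blocking a direct application of Lemma~\ref{pivotE} to $A_S$. The column-$j$ expansion trick circumvents this by augmenting $S$ with $\{i\}$, reading off equimodularity on the enlarged matrix, and descending back to $A'_S$ via cofactor expansion. Once these three preservation statements are in hand, induction on the length of the operation sequence closes the argument.
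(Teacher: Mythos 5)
Your proof is correct and follows essentially the same route as the paper, which asserts without elaboration that Lemmas~\ref{pivotE} and~\ref{pivoteqE} imply the theorem. The case $i\notin S$ that you identify as the main obstacle, resolved by augmenting $S$ with $\{i\}$ and cofactor-expanding along the pivot column, is precisely the determinant identity $\det\bigl((A/(r,j))^{J\sqcup\{j\}}\bigr)=\pm\det\bigl((A\trim(r,j))^{J}\bigr)$ used inside the proof of Lemma~\ref{pivoteqE}; your further split of a trim into pivot, row deletion, and column deletion is a minor stylistic refinement, since at that stage the deleted column is a zero column.
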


We provide additional properties about trimming in te-laces and te-interlaces.
\begin{lemma}\label{pivotte-knot}
Trimming a te-lace of size at least three yields a te-lace.
\end{lemma}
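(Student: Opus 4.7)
The plan is to verify that $A' := A\trim(r,j)$ satisfies each defining property of a te-lace, where $A$ is the given te-lace of size $m\geq 3$, $r=A_i\in A$, and $j\in\supp(r)$. Explicitly, I will check that $A'$ is a linearly independent set of $0,\pm1$ vectors of size $m-1\geq 2$, that it is totally equimodular, that it is not a tu-set, and that each of its proper subsets is a tu-set.

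Most of these properties are immediate once the key fact is in place that every entry of $A'$ belongs to $\{0,\pm1\}$. To see this, for any row $s\in A\setminus\{r\}$ the pair $\{r,s\}$ is a proper subset of the te-lace $A$ and is therefore a tu-set; by Theorem~\ref{pivotTU}, the single-row matrix $\{r,s\}\trim(r,j)$ has $0,\pm1$ entries, and this row is precisely the row of $A'$ coming from $s$. Consequently $A'\in\{0,\pm1\}^{(m-1)\times(n-1)}$. Full row rank follows from the fact that the pivot column in $A/(r,j)$ is the unit vector $e_i$, so deleting row $i$ and column $j$ does not reduce the rank of the remaining block; and total equimodularity follows from Theorem~\ref{pivotpreserveTE} applied to $A$. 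Hence $A'$ is a te-set.

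For the last two conditions: Laplace expansion of a maximal nonzero minor of $A/(r,j)$ along the pivot column gives $\eqdet(A')=\eqdet(A)/|A_i^j|=\eqdet(A)\geq 2$, so $A'$ is not a tu-set. Finally, any proper subset of $A'$ is indexed by some nonempty $K\subseteq\{1,\ldots,m\}\setminus\{i\}$ with $|K|\leq m-2$, and equals $A_{K\cup\{i\}}\trim(r,j)$. The set $A_{K\cup\{i\}}$ has size at most $m-1$, hence is a proper subset of $A$ and therefore a tu-set; since the same Laplace argument shows that every subdeterminant of a trim is, up to sign, a subdeterminant of the original matrix, trimming preserves total unimodularity, and thus $A'_K$ is a tu-set.

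The argument is essentially bookkeeping, and the only real subtlety is the step that $A'$ has entries in $\{0,\pm1\}$: this is precisely where the te-lace hypothesis on $A$ bites, since without it trimming a te-set may produce entries of magnitude $2$, as already happens when $m=2$, where the trim is a single row that becomes $0,\pm1$ only after rescaling.
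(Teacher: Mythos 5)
Your proof is correct. It takes a somewhat different route from the paper's: the paper's one-line argument passes through the correspondence with minimally non-totally unimodular \emph{square} matrices (a te-lace restricted to the columns of a nonzero maximal minor is such a matrix, and trimming preserves that property because it preserves the full determinant up to sign and total unimodularity of proper submatrices), whereas you verify the four defining properties of a te-lace directly on the rectangular trimmed matrix $A'$ — the $0,\pm1$ entries via Theorem~\ref{pivotTU} applied to the two-row tu-sets $\{r,s\}$, full row rank via the unit pivot column, total equimodularity via Theorem~\ref{pivotpreserveTE}, non-total-unimodularity via the determinant identity $\eqdet(A')=\eqdet(A)\geq 2$, and total unimodularity of proper subsets via $A'_K=A_{K\cup\{i\}}\trim(r,j)$. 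Both arguments rest on the same two facts (trims preserve TU of subsets and preserve the maximal determinants up to sign), so the mathematical core is shared; what your version buys is that it is self-contained and handles the rectangular te-lace directly, without silently reducing to an invertible square submatrix, while the paper's buys brevity by reusing the te-lace/min-non-TU dictionary that it develops elsewhere.
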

\begin{proof}
This holds because trimming a minimally non-totally unimodular matrix of size at least three yields a minimally non-totally unimodular matrix, as trimming in a $0,\!\pm1$ square matrix preserves the determinant of the whole matrix, and total unimodularity of the proper submatrices.
\end{proof}

\begin{lemma}\label{lemma:trimming_te-interlaces}
After trimming and rescaling a te-interlace, there are no te-laces of size two. In particular, there are no te-interlaces.
\end{lemma}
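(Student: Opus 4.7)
The plan is to fix a trim $(r,j)$ of the te-interlace $A$, trace carefully what happens to each remaining row under pivot--trim--rescale, and express the supports of the resulting rows via a sign partition of $S := \supp(r)$. The cases $|A| \leq 2$ are vacuous, since trimming then leaves at most one row; hence I assume $|A| \geq 3$.

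Since $A$ is a te-interlace, every row is $\pm 1$-valued on the common support $S$ and zero elsewhere. For a row $s \in A \setminus \{r\}$, pivoting at $(r,j)$ with $j \in S$ replaces $s$ by $s - \varepsilon_s r$, where $\varepsilon_s := s_j / r_j \in \{\pm 1\}$; on $S$ its entries lie in $\{-2,0,2\}$, and after trimming column $j$ and rescaling by $1/2$ we obtain $\hat s \in \{-1,0,1\}^{n-1}$. Defining $\sigma_s(k) := s_k / r_k$ on $S$ and setting $P_s := \sigma_s^{-1}(+1)$, $N_s := \sigma_s^{-1}(-1)$, a direct check shows $\hat s_k = 0$ iff $\sigma_s(k) = \sigma_s(j)$; therefore $\supp(\hat s)$ is exactly the part of the partition $\{P_s, N_s\}$ of $S$ that does not contain $j$.

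The argument then closes in two lines. Suppose, toward a contradiction, that two distinct rows $\hat s, \hat t$ (coming from $s, t \in A \setminus \{r\}$) form a te-lace; then $\supp(\hat s) = \supp(\hat t)$. Since each is the $j$-free half of a bipartition of $S$, we obtain $\{P_s, N_s\} = \{P_t, N_t\}$ as unordered partitions. Hence either $\sigma_s = \sigma_t$, forcing $s = t$ (both rows being supported exactly on $S$), contradicting distinctness; or $\sigma_s = -\sigma_t$, forcing $s = -t$, which contradicts that $\{s, t\}$, as a pair of rows of the te-interlace $A$, is a te-lace (so $s \neq \pm t$). The ``in particular'' clause is immediate from the definition of te-interlace.

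I expect the main obstacle to be purely bookkeeping: keeping the conventions for pivoting, trimming, and rescaling aligned, and remembering to exclude the pivot column $j$ from the compared supports. Once the invariant $(P_s, N_s)$ is in place, the combinatorial content reduces to the trivial observation that a $\pm 1$-valued function on $S$ is determined up to global sign by the partition it induces.
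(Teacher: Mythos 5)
Your proof is correct and is essentially the paper's argument in unnormalized form: where the paper resigns rows and columns so that the trimmed, rescaled rows become $0,1$-valued (whence two with equal support must coincide, so no $2\times 2$ minimally non-totally unimodular submatrix can appear), you track the sign ratio $\sigma_s = s/r$ directly and draw the same conclusion from the induced bipartition of $\supp(r)$. Both hinge on the identical observation that after the $(r,j)$-trim, $\supp(\hat s)$ is the $j$-avoiding block of the sign pattern of $s$ against $r$, which determines $s$ up to global sign.
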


\begin{proof}
Let $p=(r,j)$ be a pivot of a te-interlace $I$, and let $I'=I/\!\!/p$.
Recall that all the rows of $I$ have the same support.
Up to resigning rows and columns, we may assume that $r$ has only ones on its support, and that $t_j=-1$ for all $t\in I\setminus r$.
Then, $I'$ is composed of the rows $t+r$, for all $t\in I\setminus r$, to which we deleted the $j$-th column.
Since every such $t$ is $0,\!\pm1$, the vectors in $I'$ have entries only in $\{0,2\}$.
Hence, rescaling $I'$ yields a $0,\!1$ matrix which contains no te-lace of size two, as there are no $0,\!1$ minimally non-totally unimodular matrices of size $2\times 2$.
\end{proof}

\begin{lemma}\label{3rows}
Let $X=\{\ell,r,s\}$ be a te-set with $\{\ell,r\}$ a tu-set and $|\supp(z)|\geq 2$ for all $z\in X$.
Let $\{r',s'\}=(X/(\ell,j))\setminus\{\ell\}$ for some $j\in\supp(r)\cap\supp(\ell)$.
If $\supp(r')=\supp(s')$, then either $\{r,s\}$ or $\{\ell,r,s\}$ is a te-lace.
\end{lemma}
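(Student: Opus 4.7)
The plan is to pivot-expand the matrix $X/(\ell,j)$ and analyze $\supp(s')$ in two main cases, depending on whether $j\in\supp(s)$. Set $\epsilon = r_j/\ell_j\in\{\pm1\}$, so that $r'=r-\epsilon\ell$. Because $\{\ell,r\}$ is a tu-set with distinct supports, $r$ and $\epsilon\ell$ coincide on their common support; hence $r'$ is $0,\!\pm1$ with $\supp(r')=\supp(r)\triangle\supp(\ell)$.

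\emph{Case 1: $j\notin\supp(s)$.} Here $s'=s$, and the hypothesis becomes $\supp(s)=\supp(r)\triangle\supp(\ell)$. Because $|\supp(\ell)|,|\supp(r)|\geq 2$, this support differs from both $\supp(\ell)$ and $\supp(r)$; I will argue that both $\{\ell,s\}$ and $\{r,s\}$ are tu-sets. They are linearly independent (otherwise $s=\pm\ell$ or $s=\pm r$, contradicting the support equality), and since $X$ is a te-set each pair is equimodular; with distinct supports, any $2\times 2$ minor on two common-support columns takes a value in $\{0,\pm2\}$, and equimodularity forces those to vanish, whence the tu-set property. Up to resigning rows and columns, I may then write $\ell=\mathbf{1}_{I\sqcup J}$, $r|_I=\mathbf{1}_I$ with $r|_K\in\{\pm1\}^K$, and $s|_J=\epsilon_\ell\mathbf{1}_J$, $s|_K=\epsilon_r\,r|_K$, where $I=\supp(\ell)\cap\supp(r)$, $J=\supp(\ell)\setminus\supp(r)$, $K=\supp(r)\setminus\supp(\ell)$, and $\epsilon_\ell,\epsilon_r\in\{\pm1\}$. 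A short computation shows $\{\ell,r,s\}$ is linearly dependent precisely when $\epsilon_\ell\epsilon_r=-1$; since $X$ is linearly independent, $\epsilon_\ell=\epsilon_r$, and any $3\times3$ minor on a column of $I$, $J$ and $K$ has determinant $\pm2$. Hence $\{\ell,r,s\}$ is a te-set whose pairs are tu-sets but which is itself not a tu-set: it is a te-lace.

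\emph{Case 2: $j\in\supp(s)$.} Now $s'=s-(s_j/\ell_j)\ell$, and I sub-split on the nature of $\{\ell,s\}$. If $\{\ell,s\}$ is linearly dependent, then $s=\pm\ell$, contradicting the independence of $X$. If $\{\ell,s\}$ is a tu-set, the analogue of the computation for $r'$ gives $\supp(s')=\supp(s)\triangle\supp(\ell)$, so combining with $\supp(r')=\supp(s')$ yields $\supp(r)=\supp(s)$; the pair $\{r,s\}$ is then independent with common support, hence a te-lace. It remains to rule out $\{\ell,s\}$ being a te-lace. In that case $\supp(\ell)=\supp(s)$, so $\supp(s')\subseteq\supp(\ell)$, and the hypothesis forces $\supp(r)\triangle\supp(\ell)\subseteq\supp(\ell)$, i.e., $\supp(r)\subsetneq\supp(\ell)$. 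The tu-set property of $\{\ell,r\}$ gives $r=\beta\,\ell|_{\supp(r)}$ with $\beta=r_j/\ell_j$, and writing $\alpha=s_j/\ell_j$, matching the nonzero coordinates of $r'$ and $s'$ forces $s=\alpha\ell$ on $\supp(r)$ and $s=-\alpha\ell$ on $\supp(\ell)\setminus\supp(r)$. A direct check then yields $s=2\alpha\beta\,r-\alpha\ell$, contradicting the linear independence of $X$.

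\emph{Main obstacle.} The delicate step is the final subcase of Case~2: a priori the configuration $\{\ell,s\}$ being a te-lace is not immediately ruled out by the support equation, and one has to carefully propagate $\supp(r')=\supp(s')$ down to the individual coordinates of $s$ on $\supp(r)$ and on $\supp(\ell)\setminus\supp(r)$ to recognize $s$ as a rational combination of $\ell$ and $r$. In Case~1 the parity constraint $\epsilon_\ell\epsilon_r=1$ forced by the linear independence of $X$ is precisely what ensures the $3\times3$ minors evaluate to $\pm 2$ rather than $0$, and must not be skipped.
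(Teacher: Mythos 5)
Your proof is correct and follows the same pivot-and-support-tracking strategy as the paper's; your dichotomy on whether $j\in\supp(s)$ corresponds exactly to the paper's split on whether $s'=s$. Where you actually go further than the paper is in the $s'\neq s$ case: the paper asserts $s'=s\triangle(-\ell)$ up to resigning, a formula that presupposes $\{\ell,s\}$ is a tu-set, and does not separately treat the a priori possibility that $\{\ell,s\}$ is a te-lace. You handle it explicitly, deducing from $\supp(r')=\supp(s')\subseteq\supp(\ell)$ that $\supp(r)\subsetneq\supp(\ell)$ and then pinning $s=2\alpha\beta\,r-\alpha\ell$, contradicting independence; this is a genuine gap-fill and it is correct. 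Two small remarks on your Case~1. First, the reason $\supp(r)\triangle\supp(\ell)$ differs from $\supp(\ell)$ and from $\supp(r)$ is simply that these two sets are nonempty, distinct, and meet at $j$; the bound $|\supp(z)|\geq 2$ plays no role there. Second, your $\epsilon_\ell,\epsilon_r$ determinant bookkeeping (which also silently requires $J$ and $K$ to be nonempty, itself a consequence of the independence of $X$) can be replaced by the paper's shortcut: $X\trim(\ell,j)=\{r',s'\}$ is a te-lace, trimming preserves tu-sets, so $X$ is not a tu-set; combined with all three pairs of $X$ being tu-sets, $X$ is a te-lace.
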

\begin{proof}
Suppose that $\{r,s\}$ is not a te-lace, that is, it is a tu-set and in particular $\supp(r)\neq\supp(s)$. 
Let us prove that then $\{\ell,r,s\}$ is a te-lace.

Since $\{\ell, r\}$ and $\{r,s\}$ are both tu-sets, up to resigning rows, we may assume that $r$ and $\ell$ coincide on their common support, as well as $r$ and $s$.
Moreover, $r'=r\triangle(-\ell)$.
Since $X$ has full row rank, $\{r',s'\}$ has full row rank, and hence is a te-lace as $\supp(r')=\supp(s')$.

If $s'=s$, then $\supp(s)\neq \supp(\ell)$ hence $\{s,\ell\}$ is a tu-set. 
Since $X/\!\!/(\ell,j)$ is a te-lace, $X$ is not tu.
But all its proper subsets are tu, thus $X$ is a te-lace.
If $s'\neq s$, then $s'=s\triangle (-\ell)$ up to resigning.
But then $\supp(r)=\supp(s)$, contradicting the assumption that $\{r,s\}$ is a tu-set.
\end{proof}

\subsection{Two kinds of te-interlaces}

This section is devoted to the proof that only two types of te-interlaces exist: the thin and thick ones.
Before, let us provide a few technical results that we shall use.
\medskip

\subsubsection{Minimally non-totally unimodular matrices}
Recall that a matrix is minimally non-totally unimodular when it is not totally unimodular but all its proper submatrices are.
First, Camion~\cite{Camion_1965} provides the following about minimal non-totally unimodular matrices.
\begin{theorem}[\cite{Camion_1965}]\label{theorem:camion}
Let $A$ be a minimally non-totally unimodular matrix, then $\det (A)=\pm 2$ and $A^{-1}$ has only $\pm\frac{1}{2}$ entries.
Furthermore, each row and each column of $A$ has an even number of nonzeroes and the sum of all entries in $A$ equals $2\pmod{4}$.
\end{theorem}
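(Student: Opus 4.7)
The plan is to derive the four assertions in cascade, with the absence of zero entries in $A^{-1}$ as the central lemma.

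First, I would establish the basic structural observations. A matrix that is not totally unimodular contains a square submatrix of absolute determinant at least $2$; by minimality this submatrix must coincide with $A$, so $A$ is square, say $n \times n$, with $d := |\det A| \geq 2$. Every $(n-1) \times (n-1)$ submatrix of $A$ is a proper submatrix, hence totally unimodular, hence has determinant in $\{0, \pm 1\}$. Cramer's rule, equivalently the adjugate formula $A^{-1} = \operatorname{adj}(A)/\det(A)$, therefore places every entry of $A^{-1}$ in $\{0, \pm 1/d\}$.

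The crux is the claim that $A^{-1}$ has no zero entry. Arguing by contradiction, assume $(A^{-1})_{ij} = 0$. Then the $(n-1) \times (n-1)$ submatrix $B$ obtained by deleting row $j$ and column $i$ is singular. Being totally unimodular, $B$ admits a nontrivial $\{0, \pm 1\}$-linear dependence among its rows; inserting a zero in position $j$ yields a nonzero $\{0, \pm 1\}$-vector $u$ with $u^{\top} A$ supported only at column $i$ and taking there a nonzero value (by invertibility of $A$). Now I would exploit this against the minimality of $A$: the restriction of $A$ to the rows in $\operatorname{supp}(u)$ is totally unimodular, while the combination $u^{\top} A$ together with row $j$ produces a square submatrix that, by careful accounting of signs and the value at column $i$, must have determinant of absolute value at least $2$ while being a proper submatrix of $A$, contradicting minimality. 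This is the main obstacle: the contradiction is combinatorial and requires tracking exactly how $\operatorname{supp}(u)$ interacts with the column $i$ that was excised.

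Once the no-zero-entry property is available, every entry of $A^{-1}$ equals $\pm 1/d$. Reading $AA^{-1} = I$ row-by-column gives, for each $i \neq j$, a signed sum $\sum_{k} \varepsilon_{k} A_{ik} = 0$ with $\varepsilon_{k} \in \{\pm 1\}$; since the $A_{ik}$ are themselves $0, \pm 1$, the number of nonzero entries in row $i$ must be even. The symmetric identity $A^{-1} A = I$ yields the same for columns.

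To pin down $d = 2$, I would set $M := d\, A^{-1}$, a $\pm 1$ matrix satisfying $AM = dI$. The diagonal relations $\sum_{k} A_{ik} M_{ik} = d$ are signed sums of $0, \pm 1$ quantities of even support, while the off-diagonal relations $\sum_{k} A_{ik} M_{jk} = 0$ tightly couple the sign patterns of $A$ and $M$; combined with $|\det M| = d^{n-1}$ and a Hadamard-style parity argument modulo $4$, these force $d = 2$. Finally, the mod-$4$ assertion follows by summing all entries of $A \cdot (A^{-1} \mathbf{1}) = \mathbf{1}$: writing the total as $\sum_{k} c_{k} s_{k} = n$ where $c_{k}$ is the (even) column sum of $A$ and $s_{k}$ is the half-integer row sum of the $k$-th row of $A^{-1}$, reducing modulo $4$ using the even-support property yields $\sum_{i,j} A_{ij} \equiv 2 \pmod{4}$.
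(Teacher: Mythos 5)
The paper does not prove this theorem; it is invoked as a known result from~\cite{Camion_1965}, so there is no internal proof to compare against. The only related hint in the paper is the remark inside the proof of Lemma~\ref{pivotte-knot} that trimming a minimally non-totally-unimodular matrix of size at least~$3$ yields a minimally non-totally-unimodular matrix of smaller size; iterating that reduction down to the $2\times 2$ case gives $\det A=\pm2$ in one line and is the shortest route.

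Your outline has two genuine gaps, both at steps you present as plans rather than proofs. For the no-zero-entry claim, the setup is correct (from a vanishing cofactor you extract $u\in\{0,\pm1\}^n$ with $u_j=0$ and $u^\top A=\alpha e_i^\top$, $\alpha\neq0$), but the proposed conclusion --- exhibit a proper $\{0,\pm1\}$ submatrix of $A$ of determinant at least $2$ --- does not go through as stated: the row $\alpha e_i^\top$ is not a row of~$A$, and after undoing the row operation the relevant square submatrix can be all of $A$ when $|\supp(u)|=n-1$. What actually closes the argument is showing that $A_{\supp(u)}$ has rank $<|\supp(u)|$, contradicting the invertibility of~$A$: for any $|\supp(u)|$-subset $C$ of columns with $i\in C$, Laplace expansion along the synthetic row $\alpha e_i^\top$ gives $\det\bigl(A_{\supp(u)}^{C}\bigr)=\pm\alpha\cdot\det\bigl(A_{\supp(u)\setminus r_0}^{C\setminus i}\bigr)$; both the left-hand side and the remaining factor are proper TU subdeterminants, hence in $\{0,\pm1\}$, which forces the left-hand side to vanish since $|\alpha|\geq 2$; and for $C\not\ni i$ it vanishes by construction of~$u$. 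For the step $d:=|\det A|=2$, the Hadamard bound applied to the $\pm1$ matrix $M=dA^{-1}$ only gives $d^{n-1}\leq n^{n/2}$, i.e.\ roughly $d\lesssim\sqrt{n}$, and the unnamed ``parity argument modulo $4$'' is precisely where the content is missing; the trimming reduction above is the clean way out. With these two steps supplied, the even-support and mod-$4$ conclusions follow essentially as you outline, although the mod-$4$ step needs more care when $n$ is even (the row sums of $A^{-1}$ are then integers, not proper half-integers, and a $2\times 2$ or $4\times 4$ example already shows this case occurs).
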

As a consequence we obtain the following useful lemma.
\begin{lemma}\label{lemma:adding_column_to_min-nonTU}
Adding a column $x$ to a minimally non-totally unimodular matrix $A$ yields a totally equimodular matrix $A'$ if and only if $x$ is $\mathbf{0}$ or $\pm A^j$ for some column $A^j$ of $A$.
\end{lemma}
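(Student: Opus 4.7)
The plan is to treat the two directions separately. The reverse direction is a direct verification, while the forward direction proceeds by extracting a canonical signed-sum decomposition of $x$ in the column basis of $A$ and then using Camion's parity conditions to pin it down.

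For the reverse implication, suppose $x = \mathbf{0}$ or $x = \pm A^k$. Since $A$ is invertible, every subset $I$ of rows of $A'$ is linearly independent, and the nonzero $|I| \times |I|$ minors of $A'_I$ coincide up to sign with those of $A_I$: a minor using the $x$-column either vanishes (when $x = \mathbf{0}$, or when $x = \pm A^k$ and $A^k$ is also selected) or equals $\pm$ a minor of $A_I$. Hence $A'_I$ is equimodular iff $A_I$ is, which holds since $A$ is MNTU: every proper row subset forms a TU matrix and $A$ itself has equideterminant $|\det A| = 2$ by Theorem~\ref{theorem:camion}.

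For the forward direction, assume $A'$ is totally equimodular and $x \neq \mathbf{0}$. Writing $y = A^{-1} x$, expanding $x = \sum_k y_k A^k$ in the column basis of $A$ yields $\det[A^{\widehat{j}} \mid x] = \pm y_j \det A = \pm 2 y_j$. Since these determinants together with $\det A = \pm 2$ exhaust the $n \times n$ minors of $A'$, equimodularity (with equideterminant $2$) forces $y \in \{0, \pm 1\}^n$; setting $J = \{j : y_j \neq 0\}$ and $\epsilon_j = y_j$, we get $x = \sum_{j \in J} \epsilon_j A^j$ with $J \neq \emptyset$. The key step is that, for every $j \in J$, the matrix $B^{(j)} := [A^{\widehat{j}} \mid x]$ is itself MNTU: its determinant is $\pm 2$; a $k \times k$ minor of $B^{(j)}$ with $k < n$ not using the $x$-column is a minor of $A^{\widehat{j}}$, a proper submatrix of MNTU $A$, hence TU; and a $k \times k$ minor using the $x$-column belongs to some $A'_I = [A_I \mid x_I]$ with $|I| = k < n$, where $A_I$ is TU with equideterminant $1$, so the equimodularity of $A'_I$ forces this minor to lie in $\{0, \pm 1\}$.

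Applying Theorem~\ref{theorem:camion} to both $A$ and to each $B^{(j)}$, every row of each matrix has an even number of nonzeros; subtracting the two parity conditions at a fixed row index $i$ gives $A_i^j \neq 0 \Leftrightarrow x_i \neq 0$, so $\supp(A^j) = \supp(x) =: T$ for every $j \in J$. To rule out $|J| \geq 2$, pick distinct $j_1, j_2 \in J$: the columns $A^{j_1}$ and $A^{j_2}$ share support $T$ with $\pm 1$ entries on it. Since $|T|$ is even and nonzero (Camion on $A$), we can choose $i_1 \neq i_2$ in $T$; for $n \geq 3$, the corresponding $2 \times 2$ submatrix is a proper submatrix of MNTU $A$, so its determinant lies in $\{0, \pm 1\}$, while as a $\pm 1$-valued $2 \times 2$ determinant it lies in $\{0, \pm 2\}$, hence is $0$. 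Consequently $A^{j_2} = \pm A^{j_1}$, contradicting the linear independence of columns of $A$. The degenerate case $n = 2$ is settled directly: single-row equimodularity forces $x_i \in \{0, \pm 1\}$, while $x_i$ is a sum of two $\pm 1$ values on $T$ and hence even, forcing $x_i = 0$ on $T \neq \emptyset$, a contradiction. The main obstacle is spotting the self-similar structure that each $B^{(j)}$ is again MNTU, which is what unlocks the Camion parity conditions.
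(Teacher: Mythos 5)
Your proof is correct, and the spine is the same as the paper's: show that $B^{(j)} := [A^{\widehat{j}} \mid x]$ is minimally non-totally unimodular, invoke Camion's even-support condition on both $A$ and $B^{(j)}$ to deduce $\supp(x) = \supp(A^j)$, and finish with a $2\times 2$ determinant contradiction (with $n=2$ handled apart since the $2\times 2$ block would not be a \emph{proper} submatrix of $A$). What you add, and what the paper leaves as a one-line assertion, is the verification that $B^{(j)}$ is MNTU: you split the proper minors of $B^{(j)}$ into those avoiding the $x$-column (proper submatrices of $A$, hence TU) and those using it (sitting in some $A'_I$ with $|I|<n$, where $A_I$ is unimodular, so equimodularity of $A'_I$ forces equideterminant $1$). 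That step is genuinely the crux and worth spelling out. The two small deviations from the paper — first extracting $y = A^{-1}x \in \{0,\pm1\}^n$ so that $x = \sum_{j\in J}\epsilon_j A^j$, and then killing $|J|\geq 2$ by showing two columns of $A$ would be proportional, rather than (as in the paper) exhibiting a $2\times 2$ of determinant $\pm 2$ inside $A'$ — are logically equivalent reformulations of the same final contradiction; yours makes the proof slightly more self-contained since it avoids appealing to the ``essentially $0,\pm1$'' structure of TE rows, deriving $x\in\{0,\pm1\}^n$ directly from equimodularity of the maximal minors instead.
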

\begin{proof}
The ``if'' part is direct.
Let us prove the ``only if'' part.
When $A$ is of size $2$, the result is immediate since there are only four $\pm 1$ vectors in $\R^2$ and up to resigning, the only minimally non-totally unimodular matrix of size $2$ is $\begin{bmatrix}
1&1\\-1&1
\end{bmatrix}$.
Let $A$ be of size greater than $2$ and $x\neq\mathbf{0}$.
There exists a column $A^j$ of $A$ such that the matrix $B$ obtained from $A'$ by removing column $A^j$ is invertible.
Since $A'$ is totally equimodular and $A$ is minimally non-totally unimodular, $B$ is minimally non-totally unimodular.
By the second part of Theorem~\ref{theorem:camion}, each row of both $A$ and $B$ has an even sum. Consequently, $x$ and $A^j$ have the same support.
Now, if $x\neq\pm A^j$, then, up to resigning, $A'$ contains 
$\begin{bmatrix}
1&1\\-1&1
\end{bmatrix}$ of determinant $2$ as a proper submatrix, which contradicts the total equimodularity of $A'$.		
\end{proof}
The following follows from the definition of total equimodularity and the fact that if a $0,\!\pm1$ matrix is not totally unimodular, then it contains a minimally non-totally unimodular matrix.

\begin{lemma}\label{containte-lace}
A te-set which is not a tu-set contains a te-lace.
\end{lemma}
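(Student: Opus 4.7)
The plan is to exhibit a te-lace inside $A$ as a \emph{minimal} non-tu row subset. Let $\mathcal{F}$ denote the collection of row subsets $S \subseteq A$ such that $A_S$ is not a tu-set. By hypothesis $A \in \mathcal{F}$, so $\mathcal{F}$ is nonempty; alternatively, one can invoke the cited fact that a $0,\!\pm 1$ matrix which is not totally unimodular contains a minimally non-totally unimodular submatrix, whose row set then belongs to $\mathcal{F}$. Since the power set of the rows of $A$ is finite, I can choose $S \in \mathcal{F}$ minimal for inclusion.

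The crux is then to check that such a minimal $S$ yields a te-lace. First, $A_S$ is itself a te-set: its rows are linearly independent as a subset of the linearly independent rows of $A$, and any linearly independent family of rows of $A_S$ is also a linearly independent family of rows of $A$, hence equimodular by the total equimodularity of $A$; therefore $A_S$ is totally equimodular. Second, $A_S$ is not a tu-set by the choice $S \in \mathcal{F}$. Third, by minimality of $S$, every proper row subset $S' \subsetneq S$ satisfies $A_{S'} \notin \mathcal{F}$, i.e., $A_{S'}$ is a tu-set. These three properties are precisely the definition of a te-lace, so $A_S$ is a te-lace contained in $A$.

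I do not anticipate a serious obstacle: the entire argument hinges on the inheritance observation that total equimodularity passes from $A$ to any row subset $A_S$, which is immediate from the definition because linear independence of a family inside $S$ already implies linear independence inside $A$. The role of the cited fact about minimally non-totally unimodular submatrices is only to make the nonemptiness of $\mathcal{F}$ transparent; the minimization step then extracts the te-lace automatically.
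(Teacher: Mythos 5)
Your proof is correct and follows essentially the same route as the paper: pick an inclusion-minimal non-tu row subset $S$ of $A$, observe that total equimodularity is inherited by row subsets, and check that $S$ then satisfies the definition of a te-lace verbatim. The paper's terse justification invokes the existence of a minimally non-totally unimodular submatrix, but as you note, this only serves to guarantee $\mathcal{F}\neq\emptyset$, which is already clear since $A\in\mathcal{F}$; the minimization over row subsets is the real content, and you carry it out correctly.
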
	
\subsubsection{Complement matrices}

During the proof of Lemma~\ref{lemma:trimming_te-interlaces}, we observed that a single trim in a te-interlace yields a $0,\!1$ matrix, up to rescaling and resigning.
We introduce specific classes of $0,\!1$ matrices that arise from trims of te-interlaces. 
Notably, these matrices generalize complement totally unimodular and complement minimally non-totally unimodular matrices, which are thoroughly studied in Truemper's book~\cite{Truemper_1992} and in~\cite{Truemper_1980}.

In $\{0,1\}$, we define the following operation which corresponds the mod~$2$ sum: $a\oplus b=a+b\pmod{2}$.
More explicitly, we have $0\oplus0 =1\oplus1=0$ and $1\oplus 0=0\oplus 1=1$.
Let $A$ be a $0,\!1$ matrix of size $m\times n$, and $i$ a row index of $A$.
The \emph{row-$i$ complement} $A_{[i]}$ of $A$ is the $0,\!1$ matrix whose $i'$-th row is $$\begin{cases}
A_{i}, \text{ if }i'=i,\\
A_{i'}\oplus A_i, \text{ otherwise}.
\end{cases}$$

Column-$j$ complements $A^{[j]}$ are defined similarly.
Some interesting properties arise from row or column complement operations.
These operations are involutions: $(A_{[i]})_{[i]}=A=(A^{[j]})^{[j]}$.
Any row and column complement operation commute: $(A_{[i]})^{[j]}=(A^{[j]})_{[i]}$.
Moreover, up to permutation of rows, respectively of columns, we have $(A_{[i]})_{[i']} = \Pi_{(ii')} A_{[i']} $, respectively $(A^{[j]})^{[j']} = A^{[j']} \Pi_{(jj')}$.
For convenience, we will write $A^{[0]}_{[0]}=A$, $A^{[0]}_{[i]} = A_{[i]}$, $A^{[j]}_{[0]}=A^{[j]}$, and $A^{[j]}_{[i]}=(A_{[i]})^{[j]}=(A^{[j]})_{[i]}$, for $1\leq i \leq m$ and $1\leq j\leq n$.
Hence, up to permutation of rows and columns, there are $(m+1)\cdot(n+1)$~matrices obtainable from $A$ using row and column complement operations, and the set composed of these matrices is called the \emph{complement orbit} $\cal O (A)$ of $A$:
$$\cal O (A) = \cal O (A^{[j]}_{[i]}) = \left\{A^{[j]}_{[i]}\colon i\in\{0,\ldots,m\},j\in\{0,\ldots,n\}\right\}.$$
A $0,\!1$ matrix is \emph{complement totally equimodular} if all the matrices in its complement orbit are totally equimodular.
Special cases are \emph{complement totally unimodular} matrices, for whom the complement orbit consists of totally unimodular matrices only, and \emph{complement minimally non-totally unimodular} whose complement orbit is composed of minimally non-totally unimodular matrices.
\medskip

Truemper~\cite{Truemper_1980} proved the following, for which we provide a shorter proof.
\begin{theorem}[\cite{Truemper_1980}]\label{theorem:Truemper_complement}
Complement minimally non-totally unimodular matrices are of odd size.
\end{theorem}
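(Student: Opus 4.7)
The plan is to derive the parity of $n$ from the mod-$4$ constraint on the sum of entries provided by Camion's theorem (Theorem~\ref{theorem:camion}). Since $B$ is minimally non-totally unimodular, it must be square, say of size $n\times n$; the goal is to show that $n$ is odd. Write $s_1,\dots,s_n$ for the row sums of $B$ and $S=\sum_i s_i$ for the total sum. Camion gives that each $s_i$ is even and $S\equiv 2\pmod 4$. The crucial point is that, because $B$ is \emph{complement} minimally non-totally unimodular, the same two constraints apply to every row complement $B_{[i]}$.

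The key step is to compare $B$ with $B_{[i]}$ for a fixed row index $i$. For any $i'\neq i$, the sum of entries of $B_{i'}\oplus B_i$ equals $s_i+s_{i'}-2k_{ii'}$, where $k_{ii'}$ counts the positions at which both $B_i$ and $B_{i'}$ equal $1$. Summing over all rows of $B_{[i]}$ yields
\[
T_i \;=\; s_i+\sum_{i'\neq i}\bigl(s_i+s_{i'}-2k_{ii'}\bigr) \;=\; S+(n-1)s_i-2\sum_{i'\neq i}k_{ii'}.
\]
Applying Camion to $B_{[i]}$ gives $T_i\equiv 2\pmod 4$, which together with $S\equiv 2\pmod 4$ collapses to $(n-1)s_i\equiv 2\sum_{i'\neq i}k_{ii'}\pmod 4$. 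Writing $s_i=2a_i$ (allowed since $s_i$ is even) and dividing by $2$, this becomes the parity identity
\[
(n-1)\,a_i \;\equiv\; \sum_{i'\neq i} k_{ii'} \pmod 2 \qquad \text{for every } i=1,\dots,n.
\]

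To finish, I would sum this identity over $i$. The right-hand side totals $2\sum_{i<i'}k_{ii'}$, which is even, so $(n-1)\sum_i a_i\equiv 0\pmod 2$. On the other hand $\sum_i a_i=S/2$ is odd since $S\equiv 2\pmod 4$. Therefore $n-1$ must be even, i.e.\ $n$ is odd. I do not expect a substantive obstacle: the only delicate point is the right to invoke Camion's theorem on the row-complemented matrices, which is authorized precisely by the complement minimality hypothesis.
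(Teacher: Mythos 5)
Your proof is correct. It is a genuinely different argument from the one in the paper. The paper also reduces to Camion's theorem, but picks a single column $A^j$ with $A_i^j\neq 0$: by Camion that column has even support, and if $n$ were even it would also have an even number of zeros, so the row-$i$ complement flips it to a column of odd support, violating Camion for $A_{[i]}$ --- a short local contradiction argument, using only the even-column-support clause of Camion. You instead work globally with the total sum $S$: you use the even-row-sums clause and the $S\equiv 2\pmod 4$ clause, derive a mod-$2$ identity $(n-1)a_i\equiv\sum_{i'\neq i}k_{ii'}\pmod 2$ for each $i$ by comparing $B$ and $B_{[i]}$, and then sum over $i$ so that the $k_{ii'}$ terms pair up and cancel. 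What the paper's route buys is brevity and a concrete certificate (the offending column); what yours buys is that it never has to choose a nonzero entry and shows explicitly how the mod-$4$ congruences across the whole orbit force the parity of $n$. One small remark: you should note explicitly that $k_{ii'}=k_{i'i}$, which is what makes $\sum_i\sum_{i'\neq i}k_{ii'}=2\sum_{i<i'}k_{ii'}$ even; this is clear from your definition but worth saying.
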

\begin{proof}
Let $A$ be a $0,\!1$ minimally non-totally unimodular matrix of even size $n$, and $A_i^j\neq 0$.
By Theorem~\ref{theorem:camion}, the column $A^j$ of $A$ has even support and has an even number of zeroes, since $n$ is even.
Performing the row-$i$ complement operation replaces the support of $A^j$ by its complement, except at coefficient $i$.
This yields a column of odd support, hence $A_{[i]}$ is not minimally non-totally unimodular
by Theorem~\ref{theorem:camion}, and $A$ is not complement minimally non-totally unimodular.
\end{proof}

Let $A$ be a $\pm1$ matrix of size $m\times n$. We define $\nega(A)$ to be the $0,\!1$ matrix whose support is the location of the $-1$ entries of $A$.
In particular $A=\mathbf{J}-2\nega(A)$.
Note that we have $\nega(A_i) = \nega(A)_i$ and $\nega (A^j)=\nega(A)^j$, for $1\leq i\leq m$ and $1\leq j \leq n$.
For $B$ a $0,\!1$ matrix, we denote by $\overline{B} = \mathbf{J} - B$ the $0,\!1$ matrix whose support is the complement of that of $B$.
Since $A$ is a $\pm1$ matrix, we have $\nega(-A) = \overline{\nega(A)}$.

\begin{lemma}\label{lemma:correspondence_sign}
Let $A$ be a $\pm1$ matrix of size $m\times n$. Then, for every $\varepsilon \in \{\pm1\}^m$, we have:

$$\nega{(A\diag(\varepsilon))} =\begin{bmatrix}
\nega{(A)}_1 \oplus \nega{(\varepsilon)}^\top\\
\vdots \\
\nega{(A)}_m \oplus \nega{(\varepsilon)}^\top
\end{bmatrix},$$

and for every $\mu \in \{\pm1\}^n$, we have:

$$\nega{(\diag(\mu)A)} =\begin{bmatrix}
\nega{(A)}^1 \oplus \nega{(\mu)} &\cdots &
\nega{(A)}^n \oplus \nega{(\mu)}
\end{bmatrix}.$$
\end{lemma}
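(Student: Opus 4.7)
The plan is to reduce both identities to a single scalar fact: the map $\nega\colon(\{\pm1\},\cdot)\to(\{0,1\},\oplus)$ sending $+1\mapsto 0$ and $-1\mapsto 1$ is a group isomorphism. This is immediate from the four cases $\nega(1\cdot1)=0=0\oplus0$, $\nega(1\cdot(-1))=1=0\oplus1$, $\nega((-1)\cdot 1)=1=1\oplus 0$, and $\nega((-1)\cdot(-1))=0=1\oplus 1$. Hence $\nega(xy)=\nega(x)\oplus\nega(y)$ for every $x,y\in\{\pm1\}$.

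Granted this, the first identity follows from a direct entrywise computation. Right-multiplication by $\diag(\varepsilon)$ rescales the $j$-th column of $A$ by $\varepsilon_j$, so $(A\diag(\varepsilon))_i^j = A_i^j\cdot\varepsilon_j$. Since both factors lie in $\{\pm1\}$, the scalar identity applied componentwise yields
\[
\nega\bigl(A\diag(\varepsilon)\bigr)_i^j \;=\; \nega(A_i^j)\oplus\nega(\varepsilon_j) \;=\; \nega(A)_i^j\oplus\nega(\varepsilon)_j.
\]
Reading this off for all $j$ with $i$ fixed assembles the $i$-th row as $\nega(A)_i\oplus\nega(\varepsilon)^\top$, which is the stated formula.

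The second identity is symmetric: left-multiplication by $\diag(\mu)$ rescales the $i$-th row of $A$ by $\mu_i$, so $(\diag(\mu)A)_i^j=\mu_i\cdot A_i^j$, and the same entrywise argument gives
\[
\nega\bigl(\diag(\mu)A\bigr)_i^j \;=\; \nega(\mu)_i\oplus\nega(A)_i^j.
\]
Fixing $j$ and varying $i$ reassembles the $j$-th column as $\nega(A)^j\oplus\nega(\mu)$.

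There is no real obstacle; the lemma is bookkeeping for the translation between resigning and the XOR action used later in the complement-orbit discussion. The only care required is to ensure the dimensions of $\varepsilon$ and $\mu$ match so that $A\diag(\varepsilon)$ and $\diag(\mu)A$ are defined, after which the proof is entrywise and requires no induction or case analysis beyond the four scalar cases above.
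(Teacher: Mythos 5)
Your proof is correct and matches the paper's argument exactly: the paper's entire proof is the one-line observation that $\nega(x\cdot y)=\nega(x)\oplus\nega(y)$ for $x,y\in\{\pm1\}$, which you state and then expand entrywise. You were also right to flag the dimension bookkeeping — as written the lemma has $\varepsilon$ and $\mu$ with their sizes swapped ($\varepsilon$ must lie in $\{\pm1\}^n$ for $A\diag(\varepsilon)$ to be defined, and $\mu$ in $\{\pm1\}^m$ for $\diag(\mu)A$), and your proof silently uses the correct convention.
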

\begin{proof}
This comes from $\nega(x\cdot y) = \nega(x)\oplus \nega(y)$, for all $x,y\in\{\pm1\}$.
\end{proof}

Up to resigining rows and columns, let us write
\begin{equation}\label{equation:core}
A=\begin{bmatrix}
1 & \mathbf{1}^\top \\
\mathbf{1} & \mathbf{J} - 2B
\end{bmatrix},
\end{equation}
for a $0,\!1$ matrix $B$ called a \emph{core} of $A$.
Note that we have $\nega{(A)} = \begin{bmatrix}
0&\mathbf{0}^\top\\
\mathbf{0} & B
\end{bmatrix}$.
Finally, we write $A\simeq A'$ when $A$ equals $A'$ up to permutation of rows and columns.
\medskip

In the following, we relate trims of $\pm1$ matrices with the complement orbit of their core.

\begin{lemma}\label{lemma:trimming_and_complement_orbit}
Let $A$ be a $\pm1$ matrix of size $(m+1)\times(n+1)$, and $B$ a core of $A$ as in~\eqref{equation:core}.
Then, for any pivot $p=(i+1,j+1)$ of $A$, with $0\leq i\leq m$ and $0\leq j \leq n$, we have
\begin{equation}\label{equation:trim_complement}
A\trim p \simeq -2A_{i+1}^{j+1}\diag(\varepsilon) B^{[j]}_{[i]}\diag(\mu),
\end{equation}
where $\diag(\varepsilon)$ and $\diag(\mu)$ are signing matrices with $\varepsilon = \mathbf{1} -2 B^j$ and $\mu=\mathbf{1}^\top -2 B_i$, and the convention $B^0=\mathbf{0}$ and $B_0=\mathbf{0}^\top$.
\end{lemma}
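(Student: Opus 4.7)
The plan is to prove this identity by a direct computation, reducing the general case to the base case $i = j = 0$ via row and column permutations composed with appropriate signings.

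First, I would expand the pivot-then-trim formula: for $k \neq i+1$ and $l \neq j+1$,
\[
(A \trim p)_k^l \;=\; A_k^l \;-\; \alpha\, A_k^{j+1}\, A_{i+1}^l,
\]
where $\alpha = A_{i+1}^{j+1} \in \{\pm 1\}$. Using the conventions $B^0 = \mathbf{0}$ and $B_0 = \mathbf{0}^\top$, the identity $A_k^l = 1 - 2 B_{k-1}^{l-1}$ holds uniformly in $k, l$, and likewise $\alpha = 1 - 2 B_i^j$. In the base case $i = j = 0$, one has $\alpha = 1$, $\varepsilon = \mu = \mathbf{1}$, and $B^{[0]}_{[0]} = B$, and the formula simplifies directly to $(A \trim p)_k^l = (1 - 2 B_{k-1}^{l-1}) - 1 = -2 B_{k-1}^{l-1}$, matching the claim.

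For the general case, I would reduce to the base case as follows. Let $P$ (resp.\ $Q$) be the row (resp.\ column) permutation that swaps index $1$ with $i+1$ (resp.\ $j+1$), with $P$ or $Q$ the identity when $i = 0$ or $j = 0$. Then I would choose signing matrices $D_r$ and $D_c$ such that $\widetilde{A} = D_r P A Q D_c$ is in the standard form $\begin{bmatrix} 1 & \mathbf{1}^\top \\ \mathbf{1} & \mathbf{J} - 2 B' \end{bmatrix}$ with pivot $+1$ at $(1, 1)$. The key combinatorial identity $(1 - 2a)(1 - 2b) = 1 - 2(a \oplus b)$ for $a, b \in \{0, 1\}$ then shows that each swap-then-resign step implements either a row-$i$ or column-$j$ complement on the core, so that the new core $B'$ coincides with $B^{[j]}_{[i]}$ up to the permutation induced by $P$ and $Q$. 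Applying the base case to $\widetilde{A}$ yields $\widetilde{A} \trim (1, 1) = -2\, B^{[j]}_{[i]}$. Finally, since pivot-and-trim commutes with row and column resignings (up to the pivot value), I would strip back the signings $D_r, D_c$ with the pivot row and column removed; this produces exactly the factors $\diag(\varepsilon)$ on the left, $\diag(\mu)$ on the right, and an overall scalar $-2\alpha$ accounting for the normalization of the pivot to $+1$.

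The hard part will be the bookkeeping of the induced permutations and signings: one must verify the bijection between the trim indices and those of $B^{[j]}_{[i]}$, and check that the conventions $B^0 = \mathbf{0}$, $B_0 = \mathbf{0}^\top$ make the formula valid uniformly in the corner cases $i = 0$ or $j = 0$, where the corresponding row or column swap is trivial and the associated signing reduces to the identity.
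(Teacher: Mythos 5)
Your approach is correct and relies on the same three ingredients as the paper's proof: the base case $A\trim(1,1)=-2B$, the commutation of trimming with row and column signings (the paper's equation~\eqref{equation:diag_trim_col}), and the correspondence $(1-2a)(1-2b)=1-2(a\oplus b)$, which is exactly the paper's Lemma~\ref{lemma:correspondence_sign} written as $\nega(xy)=\nega(x)\oplus\nega(y)$. The organizational difference is that you conjugate by explicit transposition matrices $P,Q$ to move every pivot to position $(1,1)$ and then absorb the residual permutation into the $\simeq$, whereas the paper never permutes: it applies the signing $\diag(\pm A^{j+1})\,\cdot\,\diag(A_{i+1})$ directly and identifies the trim with $B^{[j]}_{[i]}$ via an explicit block-partitioned computation, treating the pivot locations $(i+1,1)$, $(1,j+1)$, and $(i+1,j+1)$ (the last further split on the sign of $A_{i+1}^{j+1}$) as separate cases. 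Your route is conceptually tighter — a single base case — but it pushes the difficulty into tracking the index bijection relating the trim of $A$, that of $\widetilde A$, and the entries of $B^{[j]}_{[i]}$, which is precisely the bookkeeping you flag as the delicate part; the paper accepts a longer case analysis in exchange for working in the original indexing throughout.
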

\begin{proof}
First, note that $A\trim(1,1) = -2B = -2A_1^1 \diag(\mathbf{1}-2B^0) B_{[0]}^{[0]}\diag(\mathbf{1}^\top-2B_0)$.
Similarly, with a matrix of the form
\begin{equation}\label{equation:nice_form}
A'=\begin{array}{c@{}c}
	& \begin{array}{ccc}
		& j & 
	\end{array}\\
	\begin{array}{c}
		\\i \\  \\
	\end{array} &\begin{bmatrix}
		\mathbf{J}-2D & \mathbf{1} & \mathbf{J}-2E\\
		\mathbf{1}^\top & 1 & \mathbf{1}^\top\\
		\mathbf{J}-2F & \mathbf{1} & \mathbf{J}-2G
	\end{bmatrix}\\&\end{array}\!\!, \text{ we have } \nega{(A')}=\begin{array}{c@{}c}
	& \begin{array}{ccc}
		& j & 
	\end{array}\\
	\begin{array}{c}
		\\i \\  \\
	\end{array} &\begin{bmatrix}
		D & \mathbf{0} & E\\
		\mathbf{0}^\top & 0 & \mathbf{0}^\top\\
		F & \mathbf{0} & G
	\end{bmatrix}\\&\end{array}\!\!,
	\end{equation}
	and we get $A'\trim(i,j) = -2\begin{bmatrix}
D&E\\F&G
\end{bmatrix}$.
\begin{claim}
For $A\in\{\pm1\}^{m\times n}$, $\varepsilon\in\{\pm1\}^m$, $\mu\in\{\pm1\}^n$, $i\in\{1,\ldots,m\}$, and $j\in\{1,\ldots,n\}$, we have:
\begin{equation}\label{equation:diag_trim_col}
	\left(\diag(\varepsilon)A\diag(\mu)\right)\trim(i,j) = \diag(\varepsilon_{\widehat{i}})(A\trim(i,j))\diag(\mu_{\widehat{j}}).
\end{equation}

\end{claim}
\begin{proof}
Since $A$ is $\pm1$, we have $A_i^j =\frac{1}{A_i^j}$, and the classical formula for pivoting yields 
$A\trim (i,j) = A_{\widehat{i}}^{\widehat{j}} - A_i^j A_{\widehat{i}}^{j} A_{i}^{\widehat{j}}$.
By applying this formula to $M=\diag(\varepsilon)A\diag(\mu)$, and since $\varepsilon$ and $\mu$ are $\pm1$, we have:
\begin{align*}
	M\trim (i,j) &= M_{\widehat{i}}^{\widehat{j}} - M_i^j M_{\widehat{i}}^{j} M_{i}^{\widehat{j}} \\ &= \diag(\varepsilon_{\widehat{i}})(A_{\widehat{i}}^{\widehat{j}})\diag(\mu_{\widehat{j}}) - \varepsilon_iA_i^j\mu_j (\diag(\varepsilon_{\widehat{i}})A_{\widehat{i}}^{j} \mu_j)(\varepsilon_i A_{i}^{\widehat{j}}\diag(\mu_{\widehat{j}}))\\
	&=\diag(\varepsilon_{\widehat{i}})\left(A_{\widehat{i}}^{\widehat{j}} - A_i^j A_{\widehat{i}}^{j} A_{i}^{\widehat{j}}\right)\diag(\mu_{\widehat{j}})\\
	&= \diag(\varepsilon_{\widehat{i}})(A\trim(i,j))\diag(\mu_{\widehat{j}}).
\end{align*}
Thus, the claim is proved.
\end{proof}
Our strategy is to sign $A$ in order to obtain a matrix $A'$ of the form~\eqref{equation:nice_form}, and then perform the $(i,j)$-trim, which is the matrix $\nega{(A')}$ after removing the $i$-th row and $j$-th column of $\mathbf{0}$, and multiplying it by $-2$.
We then compare it to the different types of matrices in the complement orbit of $B$.
\paragraph*{\textbf{Case 1: $(i+1,1)$-pivot and $(1,j+1)$-pivot}}~\\
Let  $1\leq i\leq m$, by Lemma~\ref{lemma:correspondence_sign}, we have
$$\nega{(A\diag(A_{i+1}))} =\begin{bmatrix}
\nega{(A)}_1 \oplus \nega{(A)}_{i+1}\\
\nega{(A)}_2 \oplus \nega{(A)}_{i+1}\\
\vdots \\
\nega{(A)}_{i} \oplus \nega{(A)}_{i+1}\\
\nega{(A)}_{i+1} \oplus \nega{(A)}_{i+1}\\
\nega{(A)}_{i+2} \oplus \nega{(A)}_{i+1}\\
\vdots \\
\nega{(A)}_m \oplus \nega{(A)}_{i+1}
\end{bmatrix} = \begin{bmatrix}
0&\mathbf{0}^\top \oplus B_i\\
0& B_1 \oplus B_i\\
\vdots & \vdots \\
0& B_{i-1}\oplus B_i\\
0& \mathbf{0}^\top\\
0& B_{i+1} \oplus B_i\\
\vdots & \vdots \\
0 & B_m \oplus B_i
\end{bmatrix},$$
Since $\mathbf{0}^\top \oplus B_i=B_i$, we have $(A\diag(A_{i+1}))\trim (i+1,1) \simeq -2B_{[i]}$, up to a cyclic permutation of the first $i$ rows. By~\eqref{equation:diag_trim_col} and $A_{i+1} = 1^\top -2B_i$ we obtain: $$A\trim(i+1,1)\simeq-2B_{[i]}\diag(\mathbf{1}^\top-2B_i)\simeq-2 A_{i+1}^1 B_{[i]}\diag(\mathbf{1}^\top-2B_i).$$
We obtain by similar computations and by~\eqref{equation:diag_trim_col} that, for $1\leq j\leq n$: $$A\trim(1,j+1)\simeq-2\diag(\mathbf{1}-2B^j)B^{[j]}\simeq-2A_1^{j+1}\diag(\mathbf{1}-2B^j)B^{[j]}.$$
\medskip

\paragraph*{\textbf{Case 2: $(i+1,j+1)$-pivots}}~\\
Now, let $1\leq i\leq m$ and $1\leq j\leq n$. We first look at how $B_{[i]}^{[j]}$ looks like, depending on $B_i^j$.
\renewcommand{\arraystretch}{1.5}
Let $J_1=\supp(B_i)\setminus \{j\}$, $J_0 =\{1,\ldots,m\}\setminus \supp(B_i)$, $I_1=\supp(B^j)\setminus\{i\}$ and $I_0 =\{1,\ldots,n\}\setminus \supp(B^j)$.
Up to putting row $i$ at the top of $B$ and column $j$ to the left, and up to reorganizing the rows and columns, and if $B_i^j=0$, we may assume that
$$B=\begin{array}{c@{}c}
& \begin{array}{lcc}
	j\;\; & J_1\;\;& J_0
\end{array}\\
\begin{array}{c}
	i\\I_1 \\ I_0
\end{array} &\begin{bmatrix}
	0 &\mathbf{1}^\top & \mathbf{0}^\top\\
	\mathbf{1} & B_{I_1}^{J_1} & B_{I_1}^{J_0} \\
	\mathbf{0} & B_{I_0}^{J_1} & B_{I_0}^{J_0} \\
\end{bmatrix}\\&\end{array}\!\!, \text{ therefore } B_{[i]}^{[j]}=\begin{array}{c@{}c}
& \begin{array}{lcc}
	j\;\; & J_1\;\; & J_0
\end{array}\\
\begin{array}{c}
	i\\I_1 \\ I_0
\end{array} &\begin{bmatrix}
	0 &\mathbf{1}^\top & \mathbf{0}^\top\\
	\mathbf{1} & B_{I_1}^{J_1} & \overline{B_{I_1}^{J_0}} \\
	\mathbf{0} & \overline{B_{I_0}^{J_1}} & B_{I_0}^{J_0} \\
\end{bmatrix}\\&\end{array}.$$
If $B_i^j=1$, we have 
$$B=\begin{array}{c@{}c}
& \begin{array}{lcc}
	j\;\; & J_1\;\; & J_0
\end{array}\\
\begin{array}{c}
	i\\I_1 \\ I_0
\end{array} &\begin{bmatrix}
	1 &\mathbf{1}^\top & \mathbf{0}^\top\\
	\mathbf{1} & B_{I_1}^{J_1} & B_{I_1}^{J_0} \\
	\mathbf{0} & B_{I_0}^{J_1} & B_{I_0}^{J_0} \\
\end{bmatrix}\\&\end{array}\!\!, \text{ therefore } B_{[i]}^{[j]}=\begin{array}{c@{}c}
& \begin{array}{ccc}
	j\;\; & J_1\;\; & J_0
\end{array}\\
\begin{array}{c}
	i\\I_1 \\ I_0
\end{array} &\begin{bmatrix}
	1 &\mathbf{0}^\top & \mathbf{1}^\top\\
	\mathbf{0} & \overline{B_{I_1}^{J_1}} & B_{I_1}^{J_0} \\
	\mathbf{1} & B_{I_0}^{J_1} & \overline{B_{I_0}^{J_0}} \\
\end{bmatrix}\\&\end{array}.$$

We now relate the two cases $B_i^j=0$ and $B_i^j=1$ with the cases $A_{i+1}^{j+1}=1$ and $A_{i+1}^{j+1}=-1$, respectively.

Assume $A_{i+1}^{j+1}=1$, and let $J_{-} = \supp(\nega(A_{i+1}))$ and $J_{+} = (\{2,\ldots,m+1\}\setminus \supp(\nega(A_{i+1})))\setminus\{j+1\}$ and 
$I_{-} = \supp(\nega(A^{j+1}))$ and $I_{+} = (\{2,\ldots,n+1\}\setminus \supp(\nega(A^{j+1})))\setminus\{i+1\}$.
We have, up to reordering the rows and columns: 
$$A \simeq \begin{array}{c@{}c}
& \begin{array}{cccc}
	1 & J_-& j+1 & J_+
\end{array}\\
\begin{array}{c}
	1\\I_- \\ i+1 \\ I_+
\end{array} &\begin{bmatrix}
	1& \mathbf{1}^\top &1&\mathbf{1}^\top\\ 
	\mathbf{1} & A_{I_-}^{J_-} & \mathbf{-1} &  A_{I_-}^{J_+}\\
	1 & \mathbf{-1}^\top & 1 & \mathbf{1}^\top \\
	\mathbf{1}& A_{I_+}^{J_-} & \mathbf{1} & A_{I_+}^{J_+}
\end{bmatrix}\\&\end{array},$$
thus,
$$\diag(A^{j+1})A\diag(A_{i+1}) \simeq \begin{array}{c@{}c}
& \begin{array}{cccc}
	1 & J_-& j+1 & J_+
\end{array}\\
\begin{array}{c}
	1\\I_- \\ i+1 \\ I_+
\end{array} &\begin{bmatrix}
	1 & \mathbf{-1}^\top & 1 & \mathbf{1}^\top \\
	\mathbf{-1} & A_{I_-}^{J_-} & \mathbf{1} & - A_{I_-}^{J_+}\\
	1& \mathbf{1}^\top &1&\mathbf{1}^\top\\ 
	\mathbf{1}& -A_{I_+}^{J_-} & \mathbf{1} & A_{I_+}^{J_+}
\end{bmatrix}\\&\end{array},$$
therefore,
$$\nega (\diag(A^{j+1})A\diag(A_{i+1})) \simeq\begin{bmatrix}
0 &\mathbf{1}^\top & 0& \mathbf{0}^\top\\
\mathbf{1} & B_{I_1}^{J_1} &\mathbf{0}& \overline{B_{I_1}^{J_0}} \\
0 & \mathbf{0}^\top & 0 &\mathbf{0}\\
\mathbf{0} & \overline{B_{I_0}^{J_1}} &\mathbf{0}& B_{I_0}^{J_0} \\
\end{bmatrix},$$
with $J_1 = \{j-1\colon j\in J_-\}$, $J_0 = \{j-1\colon j\in J_+\}$, $I_1 = \{i-1\colon i\in I_-\}$, $I_0 = \{i-1\colon i\in I_+\}$.
This yields, $\diag(A^{j+1})A\diag(A_{i+1})\trim(i+1,j+1) \simeq -2 B_{[i]}^{[j]}$, up to cyclic permutation of the first $i$ rows and $j$ columns.
Finally, by~\eqref{equation:diag_trim_col}
\begin{align*}
A\trim(i+1,j+1) &\simeq -2 \diag(\mathbf{1} -2 B^j) B_{[i]}^{[j]}\diag(\mathbf{1}-2 B_i)\\
& \simeq -2 A_{i+1}^{j+1} \diag(\mathbf{1} -2 B^j) B_{[i]}^{[j]}\diag(\mathbf{1}-2 B_i).
\end{align*}

Assume $A_{i+1}^{j+1}=-1$, and let $J_{-} = \supp(\nega(A_{i+1}))\setminus\{j+1\}$ and $J_{+} = \{2,\ldots,m+1\}\setminus \supp(\nega(A_{i+1}))$ and 
$I_{-} = \supp(\nega(A^{j+1}))\setminus\{i+1\}$ and $I_{+} = \{2,\ldots,n+1\}\setminus \supp(\nega(A^{j+1}))$, we have:
$$A = \begin{array}{c@{}c}
& \begin{array}{cccc}
	1 & J_-& j+1 & J_+
\end{array}\\
\begin{array}{c}
	1\\I_- \\ i+1 \\ I_+
\end{array} &\begin{bmatrix}
	1 & \mathbf{1}^\top & 1 & \mathbf{1}^\top \\
	\mathbf{1} & A_{I_-}^{J_-} & \mathbf{-1} & A_{I_-}^{J_+}\\
	1& \mathbf{-1}^\top &-1&\mathbf{1}^\top\\ 
	\mathbf{1}& A_{I_+}^{J_-} & \mathbf{1} & A_{I_+}^{J_+}
\end{bmatrix}\\&\end{array}.$$
	Multiplying by $\diag(-A^{j+1})$ and $\diag(A_{i+1})$, we obtain the following:
	$$\diag(-A^{j+1})A\diag(A_{i+1}) = \begin{array}{c@{}c}
& \begin{array}{cccc}
	1 & J_-& j+1 & J_+
\end{array}\\
\begin{array}{c}
	1\\I_- \\ i+1 \\ I_+
\end{array} &\begin{bmatrix}
	-1 & \mathbf{1}^\top & 1 & \mathbf{-1}^\top \\
	\mathbf{1} & -A_{I_-}^{J_-} & \mathbf{1} & A_{I_-}^{J_+}\\
	1& \mathbf{1}^\top &1&\mathbf{1}^\top\\ 
	\mathbf{-1}& A_{I_+}^{J_-} & \mathbf{1} & -A_{I_+}^{J_+}
\end{bmatrix}\\&\end{array},$$
therefore 
$$\nega (\diag(-A^{j+1})A\diag(A_{i+1})) =\begin{bmatrix}
1 &\mathbf{0}^\top & 0& \mathbf{1}^\top\\
\mathbf{0} & \overline{B_{I_1}^{J_1}} &\mathbf{0}& {B_{I_1}^{J_0}} \\
0 & \mathbf{0}^\top & 0 &\mathbf{0}\\
\mathbf{1} & {B_{I_0}^{J_1}} &\mathbf{0}& \overline{B_{I_0}^{J_0}} \\
\end{bmatrix},$$
with $J_1 = \{j-1\colon j\in J_-\}$, $J_0 = \{j-1\colon j\in J_+\}$, $I_1 = \{i-1\colon i\in I_-\}$, $I_0 = \{i-1\colon i\in I_+\}$.
This yields, $\diag(-A^{j+1})A\diag(A_{i+1})\trim(i+1,j+1) \simeq -2 B_{[i]}^{[j]}$, up to cyclic permutation of the first $i$ rows and $j$ columns.
Finally, by~\eqref{equation:diag_trim_col} \begin{align*}
A\trim(i+1,j+1) &\simeq 2 \diag(\mathbf{1} -2 B^j) B_{[i]}^{[j]}\diag(\mathbf{1}-2 B_i)\\
& \simeq -2 A_{i+1}^{j+1} \diag(\mathbf{1} -2 B^j) B_{[i]}^{[j]}\diag(\mathbf{1}-2 B_i).
\end{align*}



\end{proof}
\subsubsection{Classification of te-interlaces}
This section is devoted to the proof of the following.
\begin{lemma}\label{lemma:two_types_te-interlaces}
Only two types of te-interlaces exist: the thin and the thick ones.
\end{lemma}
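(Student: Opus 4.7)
The idea is to reduce the analysis of $A$ to that of its ``core'' matrix $B$, and then to use the decomposition theorem to pin down $\eqdet(B)$.

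Since all rows of a te-interlace share the same support, up to column restriction, resigning, and row/column permutations (all of which preserve total equimodularity), I can put $A$ in the canonical form of~\eqref{equation:canonical_form}, namely
$A = \begin{bmatrix} 1 & \mathbf{1}^\top \\ \mathbf{1} & \mathbf{J} - 2B \end{bmatrix}$
for some $0,\!1$ matrix $B$. Trimming at position $(1,1)$ yields $A\trim(1,1) = -2B$; since the pivot entry is $\pm 1$, Lemma~\ref{pivoteqE} gives $\eqdet(A) = \eqdet(A\trim(1,1)) = 2^{\ell-1}\,\eqdet(B)$. Thus proving that $A$ is thin or thick reduces to proving $\eqdet(B)\in\{1,2\}$.

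Next, I would constrain the structure of $B$. By Theorem~\ref{theorem:pivot_TE}, $B$ is totally equimodular, and by Lemma~\ref{lemma:trimming_te-interlaces}, $B$ contains no te-lace of size two, and hence no te-interlace. The decomposition theorem (Theorem~\ref{theorem:decomposition_te}) therefore writes $B$ as a mutually-tu disjoint union $B = U \sqcup L_1 \sqcup \cdots \sqcup L_k$, with $U$ a tu-set and each $L_i$ a te-lace of size at least three. Using the equideterminant formula from Section~\ref{subsection:decomposition}, this yields $\eqdet(B) = 2^k$, so the whole lemma reduces to proving $k \leq 1$.

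The hard part, which I regard as the main obstacle, is ruling out $k \geq 2$. I would proceed by contradiction: assuming two te-laces $L_1, L_2$ in the decomposition, I examine the row-subset $A_{\widehat{1}} = [\mathbf{1}\mid \mathbf{J}-2B]$ of $A$, which must itself be equimodular since $A$ is. Two of its $(\ell-1)\times(\ell-1)$ minors can be compared: the one obtained by removing column~$1$ equals $\det(\mathbf{J}-2B')$ for an invertible square submatrix $B'$ of $B$, and the matrix determinant lemma gives
\[
\det(\mathbf{J}-2B') \;=\; (-2)^{\ell-1}\det(B')\Bigl(1-\tfrac{1}{2}\,\mathbf{1}^\top(B')^{-1}\mathbf{1}\Bigr).
\]
The mutually-tu block structure makes $\mathbf{1}^\top(B')^{-1}\mathbf{1}$ split additively over the te-bricks, each te-lace contributing a specific half-integer computable from the $\pm\tfrac12$ entries of the inverse of a minimally non-totally unimodular matrix (Theorem~\ref{theorem:camion}), and each tu-block contributing an integer. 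A second minor, obtained by keeping column~$1$ and removing instead a carefully chosen column supported on one of the $L_i$ or on $U$, can be evaluated by cofactor expansion along the column of $\mathbf{1}$ and expressed in terms of $(\ell-2)\times(\ell-2)$ minors of $B$; tracking the cancellations allowed by the block structure shows that its absolute value differs from the first one as soon as $k \geq 2$, contradicting the equimodularity of $A_{\widehat{1}}$. Auxiliary tools such as Lemma~\ref{lemma:adding_column_to_min-nonTU} and Lemma~\ref{3rows} are likely to be useful to handle the case analysis on the interaction between the te-laces and the tu part. Once $k \leq 1$ is forced, the thin case ($k = 0$, $\eqdet(A) = 2^{\ell-1}$) and the thick case ($k = 1$, $\eqdet(A) = 2^{\ell}$) exhaust all possibilities.
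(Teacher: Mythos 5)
Your plan takes a genuinely different route from the paper, but it has two gaps that prevent me from accepting it as written.

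\textbf{Circularity.} You invoke Theorem~\ref{theorem:decomposition_te} and the equideterminant formula $\eqdet(B)=2^k$ to decompose the core $B$. However, in the paper these two facts are proved \emph{after} Lemma~\ref{lemma:two_types_te-interlaces}, and their proofs use it: the forward direction of the decomposition theorem needs this lemma to know that the maximal te-interlaces it finds are te-bricks (i.e., thin or thick), and Case~2 of the converse direction relies on Corollary~\ref{corollary:trim_te-interlace}, which is a consequence of this lemma. Your argument could probably be rescued by extracting the special case needed here --- since $B$ has no te-interlace (Lemma~\ref{lemma:trimming_te-interlaces}), the forward decomposition into $U\sqcup L_1\sqcup\dots\sqcup L_k$ requires only Lemma~\ref{te-knotinter} and Lemma~\ref{containte-lace}, and the determinant formula $\eqdet(B)=2^k$ could be obtained from Case~1 of the converse-direction induction, which likewise avoids this lemma --- but this restructuring is not addressed in your plan and must be done carefully.

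\textbf{The main step is not supported.} To rule out $k\geq 2$ you rely on the claim that the mutually-tu structure makes $\mathbf{1}^\top(B')^{-1}\mathbf{1}$ ``split additively over the te-bricks.'' Mutually-tu does \emph{not} imply that $B$ is block-diagonal --- the te-laces and the tu-part may have overlapping column supports (they need only satisfy a total-unimodularity condition on mixed row selections). Without block structure, $B'^{-1}$ mixes entries across bricks and there is no reason for the quadratic form to decompose. The subsequent assertion that ``tracking the cancellations $\dots$ shows that its absolute value differs'' is not an argument. Note also that your route would only bound $\eqdet(B)\leq 2$, whereas the paper proves the stronger statement that when $\eqdet(B)\geq 2$ the matrix $B$ \emph{is} a te-lace (Claim~\ref{claim:B_is_te-lace}); that extra precision is used later (e.g., Corollary~\ref{corollary:form-thick-te-inter}).

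\textbf{What the paper does instead.} The paper's proof is self-contained modulo Sections~5.1--5.2: it uses Lemma~\ref{lemma:trimming_and_complement_orbit} to identify the trims of $A$ with the complement orbit of $B$, deduces $B$ is complement totally equimodular, and then --- when $\eqdet(B)\geq 2$ --- shows via Claim~\ref{claim:M_is_complement_min-nonTU}, Truemper's parity theorem (Theorem~\ref{theorem:Truemper_complement}), Lemma~\ref{lemma:adding_column_to_min-nonTU}, Camion's theorem, and an inverse-transpose argument using Theorem~\ref{theorem:invert_TE} that $B$ is exactly a minimally non-totally unimodular matrix, hence $\eqdet(B)=2$. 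This avoids both the decomposition theorem and any assumption of block structure.
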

\begin{proof}
Let $A$ be a te-interlace.
Since the rows of $A$ all have the same support, up to restricting to a subset of columns of $A$, we may assume that $A$ is $\pm 1$.
We write
$$A=\begin{bmatrix}
1 & \mathbf{1}^\top \\
\mathbf{1} & \mathbf{J}- 2B
\end{bmatrix},$$
for $B$ a core of $A$.
By Theorem~\ref{lemma:trim_E}, all the trims of $A$ are totally equimodular. Since the complement orbit of $B$ is composed of resigning of these trims by Lemma~\ref{lemma:trimming_and_complement_orbit}, $B$ and all the matrices in its complement orbit are totally equimodular $0,\!1$ matrices.
Moreover, $\eqdet (A) = 2^{n-1}\eqdet(B_{[i]}^{[j]})$ for every pivot $(i+1,j+1)$, hence $\eqdet(B_{[i]}^{[j]})=\eqdet(B)$.
\medskip

If $\eqdet(B)=1$, then $B$ is totally unimodular, as well as all the $B_{[i]}^{[j]}$'s, since $B$ and its complements are all totally equimodular. 
Therefore $B$ is complement totally unimodular.
Moreover, $\eqdet (A)=  2^{n-1}\eqdet(B) = 2^{n-1}$.
Hence, $A$ is a thin te-interlace.
\medskip

We now show that if $\eqdet(B)\geq 2$, then $A$ is a thick te-interlace.
Then, by Lemma~\ref{containte-lace}, $B$ contains a te-lace $D$.
Let $M$ be a $|D|\times|D|$ invertible submatrix of $D$.
Note that $M$ is minimally non-totally unimodular.
\begin{claimnumb}\label{claim:M_is_complement_min-nonTU}
$M$ is complement minimally non-totally unimodular.
\end{claimnumb}
\begin{proof}
Since $A$ is totally equimodular, its submatrix $$A^\prime=\begin{bmatrix}
	1 & \mathbf{1}^\top \\
	\mathbf{1} &  \mathbf{J} -2M
\end{bmatrix},$$ is totally equimodular.
Again, Lemmas~\ref{pivotpreserveTE} and~\ref{lemma:trimming_and_complement_orbit} imply that the complement orbit of $M$ contains only totally equimodular matrices of determinant $\det(M)=\pm2$.
Since proper submatrices of $M$ are totally unimodular, removing a row at index $s>1$ and a column at index $d>1$ of $A^\prime$, and then applying Lemmas~\ref{pivotpreserveTE} and~\ref{lemma:trimming_and_complement_orbit} yields a totally unimodular matrix.
All the proper submatrices of $M$ are therefore complement totally unimodular.
Therefore, $M$ is complement minimally non-totally unimodular, as desired.
\end{proof}
Moreover, $M$ is of odd size by Theorem~\ref{theorem:Truemper_complement}.
By Lemma~\ref{lemma:adding_column_to_min-nonTU} and since $D$ is $0,\!1$,  the nonzero columns of $D$ are copies of those of $M$.
\begin{claimnumb}\label{claim:B_is_te-lace}
$B$ is a te-lace.
\end{claimnumb}
\begin{proof}
Suppose not, then $D\subsetneq B$, and let $r\in B\setminus D$.
There exists a $(|D|+1)\times (|D|+1)$ totally equimodular invertible submatrix $N$ of $D\cup \{r\}$ that we write:
$$N = \begin{bmatrix}
	t& w^\top \\v & M
\end{bmatrix},$$
where $M$, the scalar $t$, and the vectors $v$ and $w$ are $0,\!1$, and $M$ is complement minimally non-totally unimodular by Claim~\ref{claim:M_is_complement_min-nonTU}.
Since $N$ is a submatrix of $B$ whose complement orbit contains only totally equimodular $0,1$ matrices, every matrix in the complement orbit of $N$ is totally equimodular.

\begin{itemize}
	\item[] \hspace{-.5cm} {\em Fact. } We may assume $v=\mathbf{0}$ and $t=1$.
	
	\medskip
	
	By Lemma~\ref{lemma:adding_column_to_min-nonTU}, since $N$ is totally equimodular and $0,\!1$, $v$ is either $0$ or $M^j$, for some $j$.
	Suppose $v=M^j$.
	Since $N$ is invertible, this implies that $t$ and the coefficient $w_j$ above $M^j$ are distinct, and up to permutation of these two columns, suppose $t=1$.
	Therefore, $N$ and its column-$(j+1)$ complement are as follows:
	$$N = \begin{bmatrix}
		1& w_{<j}^\top & 0 & w_{>j}^\top \\
		M^j & M^{<j} & M^j & M^{>j}
	\end{bmatrix} \text{ and } N^{[j+1]} = \begin{bmatrix}
		1& w^\top \\
		\mathbf{0} & M^{[j]}
	\end{bmatrix}.$$
	Moreover, $M^{[j]}$ is minimally non-totally unimodular by Claim~\ref{claim:M_is_complement_min-nonTU}.
	We are therefore in the case $v=\mathbf{0}$, as desired.
	
	\medskip
	
	\item[] \hspace{-.5cm}  {\em Fact. } We may assume $w=\mathbf{0}$.
	
	\medskip
	
	We have
	$$N = \begin{bmatrix}
		1&  w^\top\\
		\mathbf{0} & M
	\end{bmatrix} \text{ and } (N^{-1})^\top = \begin{bmatrix}
		1&  \mathbf{0}^\top\\
		z & (M^{-1})^\top
	\end{bmatrix},$$
	where $z=- (M^{-1})^\top w$.
	Up to replacing $N$ by $N^{[1]}$, which changes $w$ to $w' = 2\cdot\mathbf{1} - w$, having complementary support to $w$, and since $w$ is of odd size, we may assume that $\supp(w)$ is even. 
	By Theorem~\ref{theorem:camion}, $M^{-1}$ has only $\pm \frac{1}{2}$ entries, and hence $z=-(M^{-1})^\top w$ is integer.
	If $w\neq \mathbf{0}$, then $z\neq \mathbf{0}$ since $M$ is invertible.
	Then, any row of $(N^{-1})^\top$ indexed in  $\supp(-(M^{-1})^\top w)$ contains a nonzero integer in its first column and $\pm \frac12$ elsewhere. Such a row is not equimodular, contradicting the total equimodularity of $(N^{-1})^\top$ given by that of $N$ and Theorem~\ref{theorem:invert_TE}.
\end{itemize}

We now end the proof of Claim~\ref{claim:B_is_te-lace}.
By the two above facts, we may assume
$$N = \begin{bmatrix}
	1& \mathbf{0}^\top \\\mathbf{0} & M
\end{bmatrix}  \text{ and therefore }N_{[1]} = \begin{bmatrix}
	1& \mathbf{0}^\top \\\mathbf{1} & M
\end{bmatrix}.$$
By Theorem~\ref{theorem:camion}, since $M$ is $0,\!1$ and minimally non-totally unimodular, it has an even number of $1$'s in each column. As $M$ is of odd size, $\mathbf{1}$ is not a column of $M$.
Therefore, by Lemma~\ref{lemma:adding_column_to_min-nonTU}, $N_{[1]}$ is not totally equimodular, a contradiction since $N_{[1]}$ is in the complement orbit of $N$.
Thus, $B$ is a te-lace.	
\end{proof}
Therefore, $\eqdet (A) = \pm2^{n-1}\eqdet(B) = \pm2^n$, and hence $A$ is a thick te-interlace.
\end{proof}
\subsubsection{Consequences}
Lemma~\ref{lemma:two_types_te-interlaces} and the interplay between te-interlaces and complement matrices have several consequences.
First, combined with Lemma~\ref{lemma:trimming_and_complement_orbit}, this gives the following.
\begin{corollary}\label{corollary:trim_te-interlace}
Let $p$ be a pivot of a te-interlace $A$. Then, we have:
\begin{itemize}
\item $A$ is thin if and only if $\widehat{A\trim p}$ is a tu-set,
\item $A$ is thick if and only if $\widehat{A\trim p}$ is a te-lace.
\end{itemize}
\end{corollary}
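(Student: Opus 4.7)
The plan is to combine Lemma~\ref{lemma:two_types_te-interlaces}, which ensures every te-interlace is either thin or thick, with Lemma~\ref{lemma:trimming_and_complement_orbit}, which identifies trims of a $\pm 1$ matrix with complement-orbit images of its core.

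First, we restrict $A$ to its common-support columns so that $A$ is $\pm 1$, and, after resigning rows and columns, write $A = \begin{bmatrix} 1 & \mathbf{1}^\top \\ \mathbf{1} & \mathbf{J}-2B \end{bmatrix}$ for some $0,1$ core $B$, with $p=(i+1,j+1)$. By Lemma~\ref{lemma:trimming_and_complement_orbit}, $A\trim p \simeq -2 A_{i+1}^{j+1}\diag(\varepsilon)B^{[j]}_{[i]}\diag(\mu)$. Each row of $A\trim p$ therefore has its nonzero entries equal to $\pm 2$, so equideterminant $2$, and rescaling amounts to dividing every row by $2$. Since being a tu-set or a te-lace is invariant under resigning and permutation, it suffices to prove that $B^{[j]}_{[i]}$ is a tu-set when $A$ is thin, and a te-lace when $A$ is thick.

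The thin case is direct: the proof of Lemma~\ref{lemma:two_types_te-interlaces} shows that $B$ is complement totally unimodular in this case, so every $B^{[j]}_{[i]}$ is totally unimodular, hence a tu-set.

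The main obstacle lies in the thick case, where we must show that every $B^{[j]}_{[i]}$, not only the initial core $B = B^{[0]}_{[0]}$, is a te-lace. Our approach is to observe that $B^{[j]}_{[i]}$ is itself a valid core of $A$: after resigning the $(i+1)$-th row and the $(j+1)$-th column of $A$ so that they become all-one and permuting them into the first row and column, $A$ takes the form $\begin{bmatrix} 1 & \mathbf{1}^\top \\ \mathbf{1} & \mathbf{J}-2B^{[j]}_{[i]} \end{bmatrix}$. The argument used to conclude that $B$ is a te-lace in the proof of Lemma~\ref{lemma:two_types_te-interlaces} relies only on the fact that $A$ is a $\pm 1$ totally equimodular matrix written in core form with $\eqdet(A) = 2^n$, not on the specific choice of core. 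Thus the same argument applied with the alternative core $B^{[j]}_{[i]}$ shows that it is a te-lace as well. Since Lemma~\ref{lemma:two_types_te-interlaces} guarantees that $A$ is thin or thick exclusively and the two conclusions are mutually exclusive, both biconditionals follow.
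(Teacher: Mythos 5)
Your proof is correct and matches the paper's (implicit) approach: the paper states this corollary as an immediate consequence of Lemma~\ref{lemma:two_types_te-interlaces} and Lemma~\ref{lemma:trimming_and_complement_orbit} with no further detail, and you correctly supply the missing steps. Your key observation for the thick case — that $B^{[j]}_{[i]}$ is itself a valid core of $A$ up to resigning and permutation, so the te-lace argument of Claim~\ref{claim:B_is_te-lace} applies to every matrix in the complement orbit and not just the initial core — is the genuine content needed here and is consistent with the computations carried out in the proof of Lemma~\ref{lemma:trimming_and_complement_orbit}.
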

In particular, the second point of Corollary~\ref{corollary:trim_te-interlace} combined with Lemma~\ref{lemma:adding_column_to_min-nonTU} yield the following.
\begin{corollary}\label{corollary:form-thick-te-inter}
Let $A'$ be an $n\times n$ invertible submatrix of a thick te-interlace $A$ of size $n$. Then, each nonzero column of $A$ is a column of~$A'$ or its opposite.
\end{corollary}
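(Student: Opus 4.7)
The plan is to apply a single pivot-and-trim to reduce the statement about the thick te-interlace $A$ of size $n$ to one about a te-lace of size $n-1$, where Lemma~\ref{lemma:adding_column_to_min-nonTU} applies, and then to lift the resulting column information back to $A$ itself.

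Restricting to the support of $A$ and resigning rows and columns, we may assume $A\in\{\pm 1\}^{n\times m}$. Let $J'$ denote the set of column indices of $A'$. Fix any $j\in J'$ and any row index $i$; after resigning row $i$, we may assume $A_i^j=1$, and set $p=(i,j)$. By Corollary~\ref{corollary:trim_te-interlace}, $\widehat{A\trim p}$ is a te-lace of size $n-1$. Since $A'$ is invertible with $A_i^j=1$, the submatrix $A'\trim p$ is an invertible $(n-1)\times(n-1)$ matrix, so $\widehat{A'\trim p}$ is an invertible square submatrix of the te-lace $\widehat{A\trim p}$, which makes it minimally non-totally unimodular.

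Now pick any column $A^k$ of $A$ with $k\notin J'$. Appending the corresponding column of $\widehat{A\trim p}$ to $\widehat{A'\trim p}$ yields a submatrix of the te-lace $\widehat{A\trim p}$, and is therefore totally equimodular. Lemma~\ref{lemma:adding_column_to_min-nonTU} then forces the appended column to be either $\mathbf{0}$ or $\pm$ a column of $\widehat{A'\trim p}$.

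Finally, I lift this back. A direct pivoting computation shows that the $p$-trim sends column $A^k$ to $\tfrac{1}{2}(A^k-A_i^kA^j)_{\widehat{i}}$ after rescaling, a vector with entries in $\{0,\pm 1\}$. If this vector is $\mathbf{0}$, then $A^k=A_i^kA^j=\pm A^j$ and we are done. Otherwise it equals $\varepsilon$ times the image of some column $A^{j'}$ with $j'\in J'\setminus\{j\}$ and $\varepsilon\in\{\pm 1\}$; an entry-by-entry comparison in $\{-2,0,2\}$ splits into two cases according to whether $A_i^k=\varepsilon A_i^{j'}$ or $A_i^k=-\varepsilon A_i^{j'}$. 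The first case directly forces $A^k=\varepsilon A^{j'}$, while the second forces $A^{j'}=\pm A^j$, contradicting the invertibility of $A'$. In all cases $A^k$ is $\pm$ a column of $A'$, as claimed. The main obstacle is precisely this lifting step: the trim-and-rescale map on columns forgets the row-$i$ entries, and one has to verify that the only ambiguity it introduces is a collapse onto the pivot column $A^j$, any other coincidence being ruled out by the invertibility of $A'$.
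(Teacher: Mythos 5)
Your proof follows the same route the paper's one-line justification indicates: trim via Corollary~\ref{corollary:trim_te-interlace} to reduce to a te-lace, apply Lemma~\ref{lemma:adding_column_to_min-nonTU} to the appended column, then lift the conclusion back to $A$. The paper leaves the lifting step implicit, and your entry-by-entry argument for it — in particular, ruling out the collapse onto $A^j$ by invoking the invertibility of $A'$ — is exactly the verification one would want to make explicit.
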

Corollary~\ref{corollary:trim_te-interlace} can be strengthened as follows, in the case of square matrices.
\begin{corollary}\label{corollary:interlace_core}
Let $A$ be a $\pm1$ square invertible matrix and $B$ a core of $A$. Then, the following holds:
\begin{itemize}
\item[\textbullet] $A$ is a thin te-interlace if and only if $B$ is complement totally unimodular,
\item[\textbullet] $A$ is a thick te-interlace if and only if $B$ is complement minimally non-totally unimodular.
\end{itemize}
\end{corollary}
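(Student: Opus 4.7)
My plan is to handle the two equivalences in parallel, splitting each into a forward and backward direction. The forward directions are nearly immediate from the machinery already in place, so I would dispatch them first. Assume $A$ is a te-interlace and fix any pivot $p=(i+1,j+1)$. Lemma~\ref{lemma:trimming_and_complement_orbit} gives $A\trim p = \pm 2\, B^{[j]}_{[i]}$ up to row/column signing and permutation, so $\widehat{A\trim p}$ equals $B^{[j]}_{[i]}$ up to such operations (which preserve total unimodularity and the te-lace property). Corollary~\ref{corollary:trim_te-interlace} then forces $B^{[j]}_{[i]}$ to be totally unimodular when $A$ is thin, and to be a te-lace---equivalently, minimally non-totally unimodular, as $B^{[j]}_{[i]}$ is invertible because $A\trim p$ is---when $A$ is thick. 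Letting $(i,j)$ range over all admissible indices yields that $B$ is complement totally unimodular in the thin case and complement minimally non-totally unimodular in the thick case.

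For the backward directions, suppose $B$ is complement totally unimodular; the other case is identical, swapping ``tu-set'' for ``te-lace'' throughout. I would proceed in three steps. \emph{Step~1:} by Lemma~\ref{lemma:trimming_and_complement_orbit} and the hypothesis, every $A\trim p$ is, up to signing and permutation, $\pm 2$ times the totally unimodular matrix $B^{[j]}_{[i]}$; as row scaling and signing/permuting preserve total equimodularity, each $A\trim p$ is totally equimodular. \emph{Step~2}, the main step: I will show that every nonempty row subset $A_S$ is equimodular, which gives total equimodularity of $A$. The case $|S|=1$ is trivial; for $|S|\ge 2$, pick $i\in S$ and apply Lemma~\ref{pivoteqE} (valid since $A_i$ is a $\pm 1$ row and $A_S$ has full row rank because $A$ is invertible): equimodularity of $A_S$ is equivalent to equimodularity of $A_S\trim(i,j)$ for every $j$. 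Crucially, $A_S\trim(i,j)$ is literally the submatrix of $A\trim(i,j)$ formed by the rows indexed by $S\setminus\{i\}$, so it is equimodular by Step~1. \emph{Step~3:} any two rows of $A$ share the full support and are linearly independent, hence form a te-lace; the computation $\det(A)=\pm 2^{n-1}\det(B)=\pm 2^{n-1}$ (obtained by subtracting the first row of $A$ from the others and using $B$ invertible TU) rules out $A$ being a tu-set for $n\ge 2$; combined with Step~2, $A$ is therefore a te-interlace, and applying the forward direction to any single trim (noting $\widehat{A\trim p}$ is a signing of the TU matrix $B^{[j]}_{[i]}$) together with Lemma~\ref{lemma:two_types_te-interlaces} identifies it as thin.

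The main obstacle is Step~2: passing from the local (trim-level) total equimodularity established in Step~1 to the global total equimodularity of $A$. The decisive idea is that Lemma~\ref{pivoteqE} reduces equimodularity of an arbitrary row subset $A_S$ to equimodularity of its pivot-trim $A_S\trim(i,j)$, and this pivot-trim is not an independently constructed object but a genuine row-subset of the ambient matrix $A\trim(i,j)$. Since total equimodularity is by definition a row-subset condition, the total equimodularity of $A\trim(i,j)$ from Step~1 transfers directly, which is what makes Step~2 close.
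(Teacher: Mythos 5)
Your proof is correct and follows essentially the same route as the paper: the forward direction via Lemma~\ref{lemma:trimming_and_complement_orbit} and the classification of te-interlace trims, and the backward direction by showing every trim of $A$ is totally equimodular and then lifting back to $A$. The only stylistic difference is that your Step~2 spells out, via Lemma~\ref{pivoteqE} and the compatibility of trimming with row restriction, the implication ``all single trims TE $\Rightarrow A$ TE'' which the paper compresses into a citation of Theorem~\ref{theorem:pivot_TE}.
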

\begin{proof}
For both cases, the ``only if'' part comes from the proof of Lemma~\ref{lemma:two_types_te-interlaces}. To see the ``if'' part, first note that, up to resigning, the trims of $A$ are in the complement orbit of $B$ by Lemma~\ref{lemma:trimming_and_complement_orbit}.
Since $B$ is either complement totally unimodular or complement minimally non-totally unimodular, all these trims are either totally unimodular or minimally non-totally unimodular, hence are all totally equimodular.
Therefore, so is $A$ by Theorem~\ref{theorem:pivot_TE}.
Since $A$ is $\pm1$, it is a te-interlace, and a determinant computation concludes each case.
\end{proof}
In particular, since proper minors of complement minimally non-totally unimodular matrices are complement totally unimodular matrices, we have the following.
\begin{remark}\label{remark:eqdet_te-interlaces}
Let $A$ be a te-interlace.
If $A$ is thin, then $\eqdet(B)=2^{|B|-1}$, for every $B\subseteq A$.
If $A$ is thick, then $\eqdet(B)=2^{|B|-1}$, for every $B\subsetneq A$, and $\eqdet(A)=2^{|A|}$.
\end{remark}

Moreover, we also obtain the following.

\begin{corollary}\label{corollary:transpose_TE}
For a square invertible $\pm1$ matrix $A$, the following statements are equivalent:
\begin{itemize}
\item $A$ is totally equimodular,
\item $A^\top$ is totally equimodular,
\item $A^{-1}$ is totally equimodular,
\item $\left(A^{-1}\right)^\top$ is totally equimodular.
\end{itemize}
\end{corollary}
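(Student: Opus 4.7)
The plan is to reduce the four-way equivalence to two ingredients. First, Theorem~\ref{theorem:invert_TE} directly yields that $A$ is totally equimodular if and only if $(A^{-1})^\top$ is. Second, once one establishes that total equimodularity is preserved by transposition for $\pm1$ square invertible matrices, applying Theorem~\ref{theorem:invert_TE} to $A^\top$ gives that $A^\top$ is totally equimodular if and only if $\bigl((A^\top)^{-1}\bigr)^\top = A^{-1}$ is, closing the loop.

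The bulk of the work is therefore to show that $A$ is totally equimodular if and only if $A^\top$ is. By symmetry, one direction suffices. The case $n=1$ is trivial, so I assume $n\geq 2$. Since every entry of $A$ is nonzero, every row of $A$ has full support $\{1,\dots,n\}$, so any two distinct non-opposite rows of $A$ form a te-lace. Invertibility rules out proportional rows, so if $A$ is totally equimodular then every two-row subset is a te-lace, and $|\det A|\geq 2$ forbids $A$ from being a tu-set; hence $A$ is a te-interlace.

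At this point I would invoke Corollary~\ref{corollary:interlace_core}: up to resigning rows and columns, $A$ admits a core $B$ as in~\eqref{equation:core}, and $A$ is a thin (resp.\ thick) te-interlace if and only if $B$ is complement totally unimodular (resp.\ complement minimally non-totally unimodular). The key observation is that resigning a column of $A^\top$ corresponds to resigning the corresponding row of $A$ and conversely, so the same sign changes realize $A^\top$ in the canonical form with core $B^\top$. The identities $(M_{[i]})^\top = (M^\top)^{[i]}$ and $(M^{[j]})^\top = (M^\top)_{[j]}$ imply $\cal{O}(B^\top)=\{M^\top : M\in\cal{O}(B)\}$, and since both total unimodularity and minimal non-total unimodularity are preserved by transposition, $B$ is complement totally unimodular (resp.\ complement minimally non-totally unimodular) if and only if $B^\top$ is. Applying the converse direction of Corollary~\ref{corollary:interlace_core} to $A^\top$ then yields that $A^\top$ is a te-interlace of the same type as $A$, and in particular totally equimodular.

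The main obstacle I expect is careful bookkeeping: tracking how the resignings of $A$ translate to resignings of $A^\top$ so that the core of $A^\top$ is exactly $B^\top$, and verifying that the complement orbit commutes with transposition as indicated. Once these routine identifications are in place, the corollary follows by chaining the equivalences above.
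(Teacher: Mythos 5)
Your proof is correct, but it takes a genuinely different route from the paper's for the central step, namely showing that total equimodularity is preserved by transposition. The paper reduces to Remark~\ref{remark:eqdet_te-interlaces}: since a square invertible totally equimodular $\pm1$ matrix is a thin or thick te-interlace, every $k\times k$ subdeterminant with $k<n$ equals $0$ or $\pm 2^{k-1}$; as this property depends only on the collection of minors and minors are invariant under transposition, $A^\top$ inherits it and is therefore totally equimodular. Your argument instead passes to the core $B$ of $A$ via Corollary~\ref{corollary:interlace_core}, checks that the complement orbit commutes with transposition through the identities $(M_{[i]})^\top=(M^\top)^{[i]}$ and $(M^{[j]})^\top=(M^\top)_{[j]}$, notes that complement total unimodularity and complement minimal non-total unimodularity are preserved under transposition, and then applies the converse direction of Corollary~\ref{corollary:interlace_core} to $A^\top$ with core $B^\top$. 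Both arguments are sound; the paper's is shorter and needs only the subdeterminant bookkeeping, while yours is heavier but arguably more structural, since it exposes the row/column symmetry at the level of cores and complement orbits rather than just at the level of determinant values. Two small remarks: the closing chain via Theorem~\ref{theorem:invert_TE} applied to $A$ and to $A^\top$ is exactly how the paper concludes as well; and to rule out $A$ being a tu-set, it is cleaner to note directly that any two rows of an invertible $\pm1$ matrix of size $\geq 2$ form a te-lace (so $A$ cannot be totally unimodular), rather than invoking the bound $|\det A|\geq 2$, though that bound is indeed valid since the determinant of an $n\times n$ $\pm1$ matrix is divisible by $2^{n-1}$.
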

\begin{proof}
Square invertible totally equimodular $\pm1$ matrices are te-interlaces.
By Lemma~\ref{lemma:two_types_te-interlaces}, if $A$ is totally equimodular then it is either a thick or a thin te-interlace.
In both cases, by Remark~\ref{remark:eqdet_te-interlaces} the $k\times k$ determinants of $A$ are either $0$ or $\pm 2^{k-1}$, for $k<n$. 
Hence, $A$ is totally equimodular if and only $A^\top$ is, 	and Theorem~\ref{theorem:invert_TE} concludes.
\end{proof}

Using the comatrix and determinant computations yield the following.
\begin{corollary}
Let $A$ be a square $\pm1$ matrix, then
\begin{itemize}
\item $A$ is a thin te-interlace if and only if $\left(\frac{1}{2}A\right)^{-1}$ is a te-lace,
\item $A$ is a thick te-interlace if and only if $\left(\frac{1}{4}A\right)^{-1}$ is a thick te-interlace.
\end{itemize}
\end{corollary}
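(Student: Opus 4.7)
The plan is to derive both biconditionals from three ingredients: the equideterminant information in Remark~\ref{remark:eqdet_te-interlaces}, the Jacobi determinant identity
\[
|\det(M[I,J])| = |\det(M)|\cdot|\det(M^{-1}[J^c,I^c])|, \qquad |I|=|J|=k,
\]
and the transpose/inverse symmetry of total equimodularity given by Theorem~\ref{theorem:invert_TE} together with Corollary~\ref{corollary:transpose_TE}. Observe that the map $A\mapsto (\tfrac{1}{c}A)^{-1}=cA^{-1}$ is an involution on invertible matrices, so once the forward implication of the second statement is proved, the converse follows by applying it to $M$ in place of $A$; in the first statement the two sides differ, so both directions must be argued separately, but by the same method.

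For the first equivalence, assume $A$ is a thin te-interlace of size $n$, so $|\det(A)|=2^{n-1}$. Remark~\ref{remark:eqdet_te-interlaces} gives $|\det(A[J^c,I^c])|\in\{0,\,2^{n-k-1}\}$ for every $|I|=|J|=k\le n-1$. Substituting into the Jacobi identity with $M=2A^{-1}$, $M^{-1}=A/2$, and $|\det(M)|=2$ yields $|\det(M[I,J])|\in\{0,1\}$ for all $1\le k\le n-1$; in particular $M\in\{0,\pm 1\}^{n\times n}$, so $M$ is minimally non-totally unimodular, which for a square matrix is precisely a te-lace. Conversely, if $M=2A^{-1}$ is a te-lace, Camion's Theorem~\ref{theorem:camion} forces $|\det(M)|=2$ and the entries of $M^{-1}=A/2$ to be $\pm\tfrac12$, so $|\det(A)|=2^{n-1}$ and $A\in\{\pm 1\}^{n\times n}$; total equimodularity of $A$ is transferred from that of $M$ via Theorem~\ref{theorem:invert_TE} and Corollary~\ref{corollary:transpose_TE}, and the equideterminant $2^{n-1}$ then identifies $A$ as a thin te-interlace.

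For the second equivalence, the same Jacobi computation with $c=4$ shows that every proper $k\times k$ subdeterminant of $M=4A^{-1}$ lies in $\{0,\pm 2^{k-1}\}$ and $|\det(M)|=2^n$, while Theorem~\ref{theorem:invert_TE} together with Corollary~\ref{corollary:transpose_TE} deliver total equimodularity of $M$. The main obstacle is to verify that $M$ is actually $\pm 1$, equivalently that every $(n-1)\times(n-1)$ minor of $A$ is nonzero. I would put $A$ in its canonical core form
\[
A = \begin{bmatrix} 1 & \mathbf{1}^\top \\ \mathbf{1} & \mathbf{J}-2B \end{bmatrix},
\]
where $B$ is complement minimally non-totally unimodular and of odd size $n-1$ by Corollary~\ref{corollary:interlace_core} and Theorem~\ref{theorem:Truemper_complement} (so $n$ is even), and split into four cases according to whether the removed row and column of $A$ are the first ones or not. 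When neither is the first, a Schur complement at the $(1,1)$ entry reduces the minor to $(-2)^{n-2}\det(B_{\widehat i}^{\widehat j})$, which is nonzero because $B^{-1}$ has only $\pm\tfrac12$ entries by Theorem~\ref{theorem:camion}. In the three remaining cases (first row removed, first column removed, or both), the matrix determinant lemma and Cramer's rule express the minor as $\pm\det(B)$ times an expression of the form $\mathbf{1}^\top B^{-1}\mathbf{1}-2$ or an entry of $B^{-1}\mathbf{1}$, each of which is a sum of an odd number of $\pm\tfrac12$ entries of $B^{-1}$ and therefore a nonzero half-integer. Combining the four cases shows $M\in\{\pm 1\}^{n\times n}$, so $M$ is a thick te-interlace, and the converse follows at once from the involution $A\mapsto 4A^{-1}$. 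I expect the delicate step to be this final parity argument, which rests crucially on $n-1$ being odd, i.e., on Truemper's Theorem~\ref{theorem:Truemper_complement}.
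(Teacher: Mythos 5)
Your proposal is correct, and it fills in the detail behind the paper's terse justification (``using the comatrix and determinant computations''). You replace the comatrix relation with the Jacobi complementary-minor identity, combine it with the equideterminant data of Remark~\ref{remark:eqdet_te-interlaces}, and transport total equimodularity via Theorem~\ref{theorem:invert_TE} together with Corollary~\ref{corollary:transpose_TE} applied to the $\pm1$ matrix $A$ (not to $M$, which avoids any circularity). This is a legitimate and essentially complete route, in the same spirit as the paper's.

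The one place you do more work than needed is the core-form Schur-complement and parity analysis used to rule out zero entries of $M=4A^{-1}$ in the thick case. The Jacobi computation already gives that every $2\times 2$ minor of the $0,\pm1$ matrix $M$ lies in $\{0,\pm 2\}$, and that alone suffices: if two rows $M_i,M_j$ had different supports, pick a coordinate $c$ where exactly one of $M_i^c,M_j^c$ is nonzero; for every other coordinate $d$ the $2\times 2$ minor in columns $c,d$ collapses to a single product of $0,\pm1$ entries, hence lies in $\{0,\pm1\}$, hence must vanish, forcing the support-deficient row to be zero and contradicting invertibility. So all rows of $M$ share the same support, invertibility makes that support full, and $M\in\{\pm1\}^{n\times n}$. (Equivalently: the minor pattern already makes $M$ a te-set; each pair of rows then has equideterminant $2$, and a two-element te-set with equideterminant $2$ is a te-lace, hence has equal supports.) This bypasses the canonical core form, the matrix-determinant-lemma cases, and Truemper's odd-size theorem entirely. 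Your parity argument is sound, just heavier than necessary.
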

The first statement means that, up to rescaling, the inverse of minimally non-totally unimodular matrices are the thin te-interlaces.
%

\subsection{Proof of Lemma~\ref{keylemma}}

A crucial result to decompose te-sets is the following, which can be restated as: if two distinct te-laces of a te-set intersect, then their union is a subset of a te-interlace.

In brief, the proof is constructed by considering a counterexample with a minimum number of rows. 
The impact of well-chosen trims is then examined. 
The analysis yields a set of properties for such a counterexample, in particular on how the trims modify the supports of the rows of the matrix, ultimately leading to a contradiction of the linear independence.

\begin{lemma}[Lemma~\ref{keylemma}]\label{te-knotinter}
In a te-set, if two distinct te-laces intersect, then they are both of size two and their symmetric difference is also a te-lace.
\end{lemma}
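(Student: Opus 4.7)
The plan is to argue by contradiction with a minimal counterexample: among all failures of the lemma, choose a te-set $A$ containing distinct intersecting te-laces $L_1, L_2$ with $|A|$ minimum, and subject to that, with fewest columns. A preliminary observation settles the easiest configuration: if $|L_1| = |L_2| = 2$ and $|L_1 \cap L_2| = 1$, writing $L_1 = \{r, s\}$ and $L_2 = \{r, t\}$, the te-lace condition forces $\supp(r) = \supp(s) = \supp(t)$, and linear independence of $A$ rules out $s = \pm t$, so $\{s, t\} = L_1 \triangle L_2$ is automatically a te-lace. Hence in any counterexample either $|L_1 \cap L_2| \geq 2$ or at least one $L_i$ has size $\geq 3$.

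First I would prove $|L_1 \cap L_2| = 1$. Suppose not, and pick $r \in L_1 \cap L_2$ together with $j \in \supp(r)$; trim $A$ at $(r, j)$ and rescale. By Lemma~\ref{pivotte-knot}, whenever $|L_i| \geq 3$ the set $\widehat{L_i \trim (r, j)}$ is again a te-lace, and a second common row $r' \in L_1 \cap L_2$ survives the pivot as a nonzero row belonging to both trimmed te-laces, yielding a strictly smaller counterexample. The remaining configurations, where some $L_i$ has size $2$, are handled by direct inspection: two distinct size-$2$ te-laces sharing at least two rows must coincide (a contradiction), and a size-$2$ te-lace intersecting a size-$\geq 3$ te-lace in $\geq 2$ rows forces a proper subset of the latter to be non-tu, contradicting the minimality of te-laces.

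Second, I would reduce to $\{|L_1|, |L_2|\} = \{2, 3\}$ in the case $L_1 \cap L_2 = \{r\}$ with some $L_i$ of size $\geq 3$. If $|L_i| \geq 4$, I pivot at a row $x \in L_i \setminus \{r\}$ and an appropriate $j \in \supp(x)$: by Lemma~\ref{pivotte-knot}, $\widehat{L_i \trim (x, j)}$ is still a te-lace, one size smaller, and minimality of $n$ ensures that columns have been reduced so that the other te-lace embeds unchanged in the trimmed matrix, producing a smaller counterexample. A separate pivot-based argument excludes the symmetric case $|L_1| = |L_2| = 3$, by shrinking one of them to a size-$2$ te-lace intersecting the other nontrivially and invoking minimality.

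Finally, with $L_1 = \{r, s\}$ (hence $\supp(r) = \supp(s)$) and $L_2 = \{r, t, u\}$, I exploit that every pair in $L_2$ is a tu-set, so its rows coincide or are opposite on their common support. A case analysis on how $\supp(s)$ sits with respect to $\supp(r), \supp(t), \supp(u)$, combined with Lemma~\ref{3rows} applied to a tu-pair of $L_2$ together with $s$, produces after pivoting at a shared column a pair of rows of equal support that, under the constraints on $L_2$, forces a nontrivial linear relation among $\{r, s, t, u\}$, contradicting the linear independence of $A$. The main obstacle is Step~1, since trimming a size-$2$ te-lace destroys it; care is required to guarantee that a pivot exists which keeps both te-laces alive, or else to exploit the rigidity of small te-laces to collapse distinctness directly.
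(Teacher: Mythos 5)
Your proposal follows the same skeleton as the paper's argument (minimal counterexample, establish $|L_1\cap L_2|=1$, pin down the sizes, then derive a linear dependence), and the final contradiction via Lemma~\ref{3rows} is in the spirit of the paper's Claims~\ref{cl4}--\ref{cl6}. Two of the reduction steps, however, have genuine gaps that the paper goes to some length to fill.

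\emph{Step 1.} After trimming at $(r,j)$ with $r\in L_1\cap L_2$ you assert you obtain ``a strictly smaller counterexample''. But if the trimmed te-laces $L_1'$ and $L_2'$ both land at size two and their symmetric difference happens to be a te-lace, the smaller te-set actually \emph{satisfies} the lemma's conclusion, so it is no counterexample and minimality gives you nothing. This is exactly the situation the paper's Claim~\ref{cl1} confronts: once minimality forces $|A'|=|B'|=2$, one is left with $|A|=|B|=3$, $|A\cap B|=2$, and an explicit support-parity argument is needed (the $p$-pivot must modify exactly one of each of three overlapping pairs of rows, which is impossible). You flag that ``care is required'' in Step 1, but for a different reason (that trimming destroys size-$2$ te-laces); this particular hole is not addressed.

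\emph{Step 2.} You reduce $|L_i|\geq 4$ by pivoting at a row $x\in L_i\setminus\{r\}$, relying on ``minimality of $n$ ensures that columns have been reduced so that the other te-lace embeds unchanged''. This does not hold: a pivot at $(x,j)$ alters \emph{every} row whose $j$-th entry is nonzero, in particular rows of $L_{3-i}$, so in general $L_{3-i}$ does not survive the trim as the same set of rows, nor is there any reason a column $j\in\supp(x)$ disjoint from all supports of $L_{3-i}$ exists. The paper sidesteps this by first proving $|B|=2$ (Claim~\ref{cl2}, which is a nontrivial argument using property $(\star)$ that $A_b = A\setminus\{r\}\cup\{b\}$ is tu for all $b\in B\setminus A$), and only then pivoting at $a\in A\setminus B$; because $|B|=2$ forces $\supp(r)=\supp(b)$, the pivot modifies both or neither of $r,b$, so $B$ is verifiably still a te-lace after the trim. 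Without establishing the small te-lace's size first, there is no clean way to control what happens to it under a pivot at a row of the other te-lace. The ``separate pivot-based argument'' you allude to for $|L_1|=|L_2|=3$ would suffer from the same difficulty. So Step 2 as written does not go through; you would need to mimic the paper's ordering (smaller lace first) or find another way to show the pivot keeps the other lace intact.
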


\begin{proof}
Let $M$ be a counterexample having a minimum number of rows, that is, $M$ is a te-set, such that $M$ is the union of two te-laces $A$ and $B$ with $r\in A\cap B$, both $B\setminus A$ and $A\setminus B$ nonempty, and without loss of generality $|A|\geq 3$.

\begin{claimnumb}\label{cl1}
We may assume that $A\cap B=\{r\}$. 
\end{claimnumb}
\begin{proof}
If $\supp(a) = \supp(b)$ for some $a\in A\setminus B$ and $b\in B\setminus A$, then $\{a,b\}$ is a te-lace and $A\cup \{a,b\}$ satisfies the claim.
Thus, assume that $\supp(a) \neq \supp(b)$ for all $a\in A\setminus B$ and $b\in B\setminus A$.

By contradiction, assume $|A\cap B|\geq 2$, and let $s\in A\cap B\setminus\{r\}$.
Let $p=(s,j)$ be a pivot of $M$, and $A'=A/\!\!/p$ and $B'=B/\!\!/p$.
Since $s\in A\cap B$ and pivoting preserves te-laces by Lemma~\ref{pivotte-knot}, both $A'$ and $B'$ are te-laces.
By $p$-pivoting $M$, the only row that disappeared is $s$ and the row obtained from $r$ belongs to both $A'$ and $B'$, hence $A'$ and $B'$ intersect.
Moreover, $A'\cup B'$ is a te-set by Theorem~\ref{pivotpreserveTE}.

Therefore, since $M$ is a minimum counterexample, we have $|A'|=|B'|=2$, and thus $|A|=|B|=3$ and $A\cap B =\{r,s\}$.
More precisely, let $A=\{a,r,s\}$, $B=\{b,r,s\}$, $A'=\{a',r'\}$, and $B'=\{b',r'\}$, where $x'$ is obtained from $x$ by the $p$-pivot for $x=a,r,b$.
Note that $a$, $b$, and $r$ all have a different support: $\supp(a)\neq \supp(r)$ and $\supp(b)\neq \supp(r)$ because $A$ and $B$ are both te-laces of size three; and $\supp(a)\neq \supp(b)$.
Moreover, for all $x=a,b,r$, since $\{x,s\}$ is a tu-set, we have either $\supp(x')=\supp(x)$ or $\supp(x')=\supp(x)\triangle \supp(s)$.

Now, since $A'$ and $B'$ are te-laces of size two, we have $\supp(a')=\supp(r')=\supp(b')$.
For that to happen, the $p$-pivot changed exactly: one row among $a$ and $r$, one among $b$ and $r$, and one among $a$ and $b$.
This is impossible.
\end{proof}

\begin{claimnumb}\label{cl2}
$|B|=2$.
\end{claimnumb}
\begin{proof}
By contradiction, assume that $|B|\geq 3$.
We first prove that: 
$$(\star) \quad A_b=A\setminus\{r\}\cup\{b\} \text{ is a tu-set for all $b\in B\setminus A$}.$$
Otherwise, there exists $b\in B\setminus A$ such that $A_b$ contains a te-lace $X$, by Lemma~\ref{containte-lace} and because every $A_b$ is a te-set.
If $X= A_b$, then $|A\cup X|=|A|+1<|A\cup B|$, since $|B|\geq 3$. Thus, $A\cup X$ is a smaller counterexample, because $|A|\geq 3$.
Therefore, $X\subsetneq A_b$, but then $B\cup X$ is a smaller counterexample because $|B|\geq 3$.
Hence, $(\star)$ is proved.
\medskip

Now, let $b\in B\setminus\{r\}$ and $p=(b,j)$ be a pivot of $M$. 
Let $r'$ denote the row obtained from $r$ by $p$-pivoting.
By Lemma~\ref{pivotte-knot}, $B/\!\!/p$ is a te-lace.
Since $A\cup\{b\}$ is a te-set and not a tu-set, and since trimming preserves te-sets and tu-sets, $A/\!\!/p$ contains a te-lace $X'=X/\!\!/p$ for some $X\subseteq A\cup\{b\}$, with $b\in X$.
Then, $X'$ contains $r'$ because $A/\!\!/p=(A_b/\!\!/p)\cup\{r'\}$, and because $A_b/\!\!/p$ is a tu-set, by Theorem~\ref{pivotTU} and by the fact that $A_b$ is a tu-set by $(\star)$.
In particular, $r\in X$.

Note that $X'$ and $B/\!\!/p$ are te-laces and intersect as both contain $r'$.
Moreover, we have $X'=X/\!\!/p\subseteq A/\!\!/p$.
Therefore, if $|X'|=|A|\geq 3$, then $X'$ and $B/\!\!/p$ form a smaller counterexample, a contradiction.
Otherwise, we have $|X'|<|A|$ and hence $|X|\leq|A|$.
Suppose $|X|=|A|$, since $b\in X$, we have $X\neq A$, so $X=A\setminus\{a\}\cup\{b\}$, for some $a\in A$. Moreover, $a\neq r$, as otherwise, $X=A_b$ is a tu-set, which is impossible since $X'$ is a te-lace.
Then, $X$ contains a te-lace $Y$ that contains $r$ and $b$ as $X\setminus\{r\}$ and $X\setminus\{b\}$ are tu-sets.
Hence $Y\setminus A = \{r,b\}$ and $A\setminus Y \neq \emptyset$, and $Y\cap A \neq \emptyset$, so $|Y| = 2+|Y\cap A|\geq 3$.
Since $|Y\cap B|=1$ and $|B|\geq 3$, there exists an element of $B$ neither in $A$ nor in $Y$. Thus, $A$ and $Y$ form a smaller counterexample.
Finally, we have $|X|<|A|$, and since $X$ is not a tu-set, it contains a te-lace $Y$, that must contain both $b$ and $r$ and another element of $A$, as otherwise it is a proper subset of $B$ which is a tu-set, so $|Y|\geq 3$.
We have $|Y\cup B|\leq |X\cup B|< |A\cup B|$, hence $Y$ and $B$ are two intersecting te-laces with $|B|\geq 3$, hence form a smaller counterexample, a contradiction.
\end{proof}

\begin{claimnumb}\label{cl3}
$|A|=3$.
\end{claimnumb}
\begin{proof}
Let $B=\{r,b\}$.
Take a pivot $p=(a,j)$ of $M$ with $a\in A\setminus B$, and let $A'=A/\!\!/p$ and $B'=B/\!\!/p=\{r',b'\}$. 

By Lemma~\ref{pivotte-knot}, $A'$ is a te-lace.
Let us prove that $B'$ is a te-lace.
Since $B$ is a te-lace of size two, we have $\supp(r)=\supp(b)$.
Hence, the $p$-pivot modified either both $r$ and $b$, or none of them. 
In the latter case, $B'=B$ is a te-lace.
In the former case, $x'=x\triangle (\pm a)$, for $x=r,b$, thus $\supp(r')=\supp(b')$ and $B'$ is a te-lace.

Since $M$ is a minimum counterexample and $M/\!\!/p$ is a smaller te-set composed of two intersecting te-lace $A'$ and $B'$, we get $|A'|=|B'|=2$.
Thus, $|A|=3$.
\end{proof}

From now on, let $A=\{a,s,r\}$ and $B=\{r,b\}$.
We will analyze the effect of $(s,j)$-pivoting $M$ for some pivot $p=(s,j)$.
Since $A$ is a te-lace of size three, $\supp(a)$ and $\supp(s)$ intersect, and we may assume that the $p$-pivot is performed with $j\in\supp(a)\cap\supp(s)$.
For $x=a,b,r$, let $x'$ denote the row obtained after $p$-pivoting $M$.
The choice of the pivot implies $\supp(a')=\supp(a)\triangle \supp(s)$ because $\{a,s\}$ is a tu-set.

Since $A$ and $B$ are te-laces respectively of size three and two, $a$, $s$, and $r$ have pairwise different supports and $\supp(r)=\supp(b)$.
In particular, as $M$ is a te-set, $\{a,b\}$ and $\{s,b\}$ are tu-sets.

\begin{claimnumb}\label{cl4}
$\supp(r)=\supp(b)=\supp(a)\triangle\supp(s)$.
\end{claimnumb}
\begin{proof}
We show that $\{a',b',r'\}$ is a te-interlace of size three.
By Lemma~\ref{pivotte-knot}, $\{a',r'\}$ is a te-lace because $A$ is.
In particular, $\supp(a')=\supp(r')$.
As $\{s,r\}$ is a tu-set, either $r'=r$ or $\supp(r')=\supp(r)\triangle \supp(s)$.
But since $\supp(a)\neq \supp(r)$ and $\supp(a')=\supp(a)\triangle \supp(s)$, we have $r'=r$.
Since $\supp(r)=\supp(b)$, we also have $b'=b$.
Thus $a'$, $b'$, and $r'$ all have the same support, and $\{a',b',r'\}$ is a te-interlace of size three.

Therefore, back to $M$, we have $\supp(r)=\supp(b)=\supp(a)\triangle\supp(s)$. 
\end{proof}

Since $A$ is a te-lace and has size three, $a$, $s$, and $r$ have pairwise intersecting support and, by Claim~\ref{cl4}, we have $\supp(a)\cup \supp(s)=\supp(a)\cup \supp(r)=\supp(r)\cup \supp(s)$.
Claim~\ref{cl4} also implies that $\supp(a)\cap \supp(s)\cap \supp(r)$ is empty.
In particular, $\supp(a)$ is the disjoint union of $\supp(a)\cap\supp(s)$ and $\supp(a)\cap\supp(r)$, and similarly for $s$ and $r$.

\begin{claimnumb}\label{cl5}
$a$, $s$, and $r$ pairwise coincide on their respective common support (up to resigning rows).
\end{claimnumb}
\begin{proof}
Since $\{a,s\}$ is a tu-set, $a$ and $s$ coincide on $\supp(a)\cap\supp(p)$, up to replacing $s$ by $-s$.
Since $\{a,r\}$ is a tu-set, then $a$ and $r$ either coincide or are opposite on their common support.
The same holds for $s$ and $r$.

As we may multiply rows by $-1$, it is enough to prove that either both coincide, or both are opposite, because $\supp(a)\cap \supp(s)\cap \supp(r)$ is empty.
By contradiction, suppose without loss of generality that $a$ and $r$ are opposite on $\supp(a)\cap \supp(r)$ and $s$ and $r$ coincide on $\supp(s)\cap \supp(r)$.
Then, we have $a-s+r=0$, a contradiction to the full row rank hypothesis.
\end{proof}

Since $\supp(b)=\supp(r)$, $\supp(b)$ is the disjoint union of $\supp(b)\cap\supp(a)$ and $\supp(b)\cap\supp(s)$.

\begin{claimnumb}\label{cl6}
$b$ coincides with $a$ and with $-s$ on their respective common support (up to multiplying $b$ by $-1$).
\end{claimnumb}
\begin{proof}
Since $\{a,b\}$ is a tu-set, $a$ and $b$ either coincide or are opposite on their common support.
The same holds for $b$ and $s$.
Since $\supp(b)=\supp(r)$ and $b\neq r$, the previous claim implies the result.
\end{proof}

Claims~\ref{cl5} and~\ref{cl6} imply that $b = s-a$, contradicting the full row rank of~$M$.
This proves that there is no minimal counterexample and the first part of the lemma follows.
\medskip

Therefore, $A=\{a,r\}$ and $B=\{b,r\}$ where $a$, $b$, and $r$ are linearly independent.
Since $A$ and $B$ are te-laces, we have $\supp(a)=\supp(b)=\supp(r)$, and hence $A\triangle B = \{a,b\}$ is also a te-lace.
\end{proof}

\subsection{Proof of the decomposition theorem}
Thank to the results of the previous sections, we are now in position to prove the decomposition theorem.

Recall that disjoint te-bricks are mutually-tu when the sets which intersect several of them while containing none of the te-laces and at most one vector of each te-interlace are tu-sets.

\begin{theorem}[{Theorem~\ref{theorem:decomposition_te}}]\label{theorem:decomposition_te_full}
A linearly independent set of $0,\!\pm1$ vectors $A$ is a te-set if and only if it is the disjoint union of mutually-tu te-bricks. More precisely:
$$A = \underbrace{U_{}}_{\text{tu-set}} \sqcup~\underbrace{L_1\sqcup \dots \sqcup L_k}_{\text{te-laces}}~\sqcup \underbrace{S_1\sqcup \dots\sqcup S_\ell}_{\text{thin te-interlaces}}\sqcup \underbrace{T_1\sqcup \dots\sqcup T_m}_{\text{thick te-interlaces}}\!\!.$$
\end{theorem}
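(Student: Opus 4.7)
The plan is to prove the equivalence in two directions: the ``only if'' direction constructively, and the ``if'' direction by induction on $|A|$ using the trim-characterization of equimodularity (Lemma~\ref{pivoteqE}).

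For the forward direction, I begin with a te-set $A$. If $A$ is a tu-set, taking $U=A$ suffices, so I assume otherwise. I let $\mathcal{L}$ denote the family of all te-laces of $A$ and set $U:=A\setminus\bigcup_{L\in\mathcal{L}}L$; then $U$ is a tu-set, since otherwise Lemma~\ref{containte-lace} would supply a te-lace inside $U$, contradicting the construction. The key lemma (Lemma~\ref{te-knotinter}) asserts that any two distinct intersecting te-laces are both of size two with symmetric difference again a te-lace; iterating this observation shows that the equivalence relation on $\mathcal{L}$ generated by ``shares a vector'' partitions $\mathcal{L}$ into classes that are either singletons (the isolated te-laces $L_1,\ldots,L_k$) or collections of pairwise-intersecting size-two te-laces whose union forms a te-interlace. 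Lemma~\ref{lemma:two_types_te-interlaces} then classifies each te-interlace as either thin ($S_i$) or thick ($T_j$), yielding the announced disjoint union. The mutually-tu property will follow by construction: any subset violating it would contain a te-lace (by Lemma~\ref{containte-lace}) which would intersect two distinct bricks, contradicting the maximality of the clustering of $\mathcal{L}$.

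For the backward direction, I induct on $|A|$, so let $A$ be a mutually-tu disjoint union of te-bricks and assume the statement for all strictly smaller sets. I first verify that every $A'\subsetneq A$ remains a mutually-tu disjoint union of te-bricks: removing a row from a tu-set or a size-two te-lace produces a tu-set, from a te-lace of size at least three produces a shorter te-lace (Lemma~\ref{pivotte-knot}), and from a te-interlace produces a tu-set (since proper subsets of te-interlaces are tu-sets); the mutually-tu property transfers automatically to subfamilies. By induction, every proper subset of $A$ is a te-set, so it remains only to show that $A$ itself is equimodular. I invoke Lemma~\ref{pivoteqE}: fixing one row $r$ of the rescaling $\widehat{A}$, it suffices to show that $\widehat{A}\trim(r,j)$ is equimodular for every $j\in\supp(r)$, which I will obtain by proving that this rescaled trim is again a mutually-tu disjoint union of te-bricks --- hence a te-set by the induction hypothesis.

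To analyze the trim on each te-brick $B$: when $r\notin B$, the mutually-tu assumption together with Corollary~\ref{corollary:trim_te-interlace} will yield that after rescaling $B$ becomes a te-brick of the same type; when $r\in B$, the cases are $B$ a tu-set or a size-two te-lace (the rescaled trim is a tu-set), $B$ a larger te-lace (a shorter te-lace, by Lemma~\ref{pivotte-knot}), or $B$ a te-interlace (by Lemma~\ref{lemma:trimming_te-interlaces} and Corollary~\ref{corollary:trim_te-interlace} the rescaled trim contains no te-interlaces, and the already-established forward direction, applied to this strictly smaller te-set, decomposes it into a tu-set together with mutually-tu te-laces). I expect the hardest point to be precisely this last case --- when every row of $A$ lies in a te-interlace, so that the trim forces the decomposition of some te-interlace into several new pieces. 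The obstacle is to verify that these freshly produced pieces interact with the other, unaffected bricks so that the mutually-tu property is preserved on the reassembled family; this must be argued from the original mutually-tu assumption on $A$, combined with the structural fact that the rescaled trim of a te-interlace produces no new te-interlace. Once this propagation of the mutually-tu property through the trimming operation is confirmed, the induction hypothesis gives that each $\widehat{A}\trim(r,j)$ is a te-set, hence equimodular, and Lemma~\ref{pivoteqE} concludes that $A$ itself is equimodular.
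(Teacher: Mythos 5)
Your proposal follows the same overall roadmap as the paper — forward direction via the key lemma (Lemma~\ref{te-knotinter}) and the te-interlace classification, backward direction by induction on $|A|$ combined with the trim characterization of equimodularity (Lemma~\ref{pivoteqE}). The forward direction is essentially complete and correct.

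However, in the backward direction you leave a genuine gap at precisely the point you identify as the hardest: you state that one must check that, after trimming and rescaling a te-interlace, the mutually-tu property propagates to the reassembled family, and you then write ``Once this propagation \ldots is confirmed'' without actually confirming it. This is not a routine verification. In the paper this is Claim~5.16 (stated for the case where every row lies in a te-interlace), and its proof is the technical heart of the backward direction: one supposes a minimal non-tu set $X'$ violates the mutually-tu property in the rescaled trim, pulls it back to a set $X\subseteq A$ that decomposes as a te-interlace piece $I_X$ plus a tu-set $U_X$, and then reaches a contradiction by computing $\gcddet(X) = 2^{|I_X|-1}$ (via successive trimming through the rows of $U_X$) and showing this forces all maximal subdeterminants of $X'$ to be $0,\pm1$, so that $X'$ is in fact a tu-set. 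Without this determinant bookkeeping, there is no reason why the rescaling (which multiplies the trimmed te-interlace rows by $\tfrac12$) should not destroy total unimodularity of a cross-brick subset. The argument you sketch — ``it must follow from the original mutually-tu assumption combined with the fact that rescaled trims contain no te-interlace'' — does not by itself supply the needed quantitative control.

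A secondary point: you do not make the paper's clean case split between (i) $A$ containing a row in no te-interlace, where the trim requires no rescaling at all and the bricks carry over type-for-type, and (ii) every row of $A$ lying in a te-interlace, where rescaling is forced. Handling both scenarios uniformly, as you attempt, obscures the fact that the delicate determinant argument is needed only in the second case, and makes it harder to see where the rescaling can change the brick structure.
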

\begin{proof}
Let us denote the property on the right by~$(\ast)$.
Note that the property of being mutually-tu implies that the decomposition is unique, up to considering te-laces of size two either as te-interlaces or as te-laces.
In this proof, we will make no distinction between thin and thick te-interlaces. In fact, once we obtain a decomposition of the form 
$$A = \underbrace{U_{}}_{\text{tu-set}} \sqcup~\underbrace{L_1\sqcup \dots \sqcup L_k}_{\text{te-laces}}~\sqcup \underbrace{I_1\sqcup \dots\sqcup I_s}_{\text{te-interlaces}}\!\!,$$
we use Lemma~\ref{lemma:two_types_te-interlaces} to obtain that the $I_i$'s are either thin or thick.
\medskip

The fact that te-sets satisfy~$(\ast)$ comes from Lemma~\ref{te-knotinter}.
Indeed, let $A$ be a te-set and $\cal L$ its family of te-laces. 
By Lemma~\ref{containte-lace}, $U = A\setminus (\bigcup_{L\in\cal L} L)$ is a tu-set.
By Lemma~\ref{te-knotinter}, intersecting te-laces are part of a te-interlace.
Thus, the intersecting sets of $\cal L$ can be regrouped into disjoint maximal te-interlaces.
Let $\cal I$ denote this family of te-interlaces.
The remaining te-laces $\cal L'$ of $\cal L$ are disjoint.
Note that $\{U\}\cup \cal L'\cup \cal I$ is a partition of $A$.
By construction, no te-lace intersects distinct sets among, $U$, $L\in\cal L'$, and $I\in \cal I$, thus these sets are mutually-tu, and $A$ satisfies~$(\ast)$.

\medskip

To see the converse, we will need the following claims, where $A$ is a full row rank matrix satisfying~$(\ast)$, $B\subsetneq A$ denotes a subset of rows of $A$, and $\ell$ is a row of $A$ which is not in $B$.

First remark that if $\ell\in A$ has $|\supp(\ell)|=1$, then $A\setminus\{\ell\}$ is a te-set if and only if $A$ is a te-set.

\begin{claimnumb}\label{cl1.0}
If $B\subseteq A$, then $B$ satisfies~$(\ast)$. 
\end{claimnumb}
\begin{proof}
Denote by $\cal U\cup \cal L \cup \cal I$ the decomposition of the rows of $A$ given by~$(\ast)$, where $\cal U=\{U\}$ and $U$ is a tu-set, every $L\in \cal L$ is a te-lace, and every $I\in\cal I$ is a maximal te-interlace.
Let $\mathcal{L}_B= \{L: \text{ for $L\in\cal L$ with $L\subseteq B$}\}$, $\mathcal{I}_B= \{I\cap B: \text{ for $I\in\cal I$ with $|I\cap B|\geq 2$}\}$, and $\cal U_B=\{U_B\}$, where $U_B$ is the union of $U\cap B$, $L\cap B$ for $L\in\mathcal{L}$ with $L\setminus B \neq L$, and $I\cap B$ for $I\in\mathcal{I}$ with $|I\cap B| = 1$.

Note that $\cal U_B\cup\mathcal{L}_B\cup \mathcal{I}_B$ is a partition of $B$.
By construction, $U_B$ is a tu-set since the te-bricks of $A$ are mutually-tu. 
Each set of $\mathcal{L}_B$ is a te-lace, and each set of $\mathcal{I}_B$ is a te-interlace.
Now, the sets of this partition are mutually-tu because $B\subseteq A$ and $A$ satisfies~$(\ast)$.

Therefore, $B$ satisfies~$(\ast)$.
\end{proof}

\begin{claimnumb}\label{cl1.1}
If $B$ is a te-lace contained in no te-interlace and $\ell\in A\setminus B$, then $(B\cup \{\ell\})/\!\!/(\ell,j)$ is a te-lace for all $j\in\supp(\ell)$.
\end{claimnumb}
\begin{proof}
Let $p=(\ell,j)$ be a pivot of $A$.

We first prove that $B\cup \{\ell\}$ is a te-set by induction on its number of rows.
Since the te-bricks of $A$ are mutually-tu, $B\setminus\{b\}\cup\{\ell\}$ is a tu-set for all $b\in B$. Moreover, $B$ is a te-lace.
Therefore, all there is to show is that $B\cup \{\ell\}$ is equimodular.

If $B=\{b_1,b_2\}$ has size two, then the result holds since $\supp(b_1)=\supp(b_2)$ and both $\{\ell,b_1\}$ and $\{\ell,b_2\}$ are tu-sets by $(\ast)$.
Thus, assume $|B|\geq 3$ and let $q=(b,j')$ be a pivot of $B\cup\{\ell\}$ for some $b\in B$.
By Lemma~\ref{pivotte-knot} and since trimming preserves tu-sets, $(B\cup\{\ell\})/\!\!/q$ is the disjoint union of the te-lace $B\trim q$ and the row obtained from $\ell$ by $q$-trimming, and they are mutually-tu.
In particular, the associated matrix is equimodular by induction.
Since this holds for all $j'\in\supp(b)$, $B\cup\{\ell\}$ is equimodular by Lemma~\ref{pivoteqE}.

Therefore, $B\cup \{\ell\}$ is a te-set, and hence so is $B'=(B\cup \{\ell\})/\!\!/p$ by Theorem~\ref{pivotpreserveTE}.
Moreover, $B'$ is not a tu-set as $B\cup \{\ell\}$ is not a tu-set.
The fact that $B'$ is a te-lace then follows, because all its proper subsets are tu.
Indeed, $B\setminus\{b\}\cup\{\ell\}$ is a tu-set for all $b\in B$, and tu-sets are preserved by trimming.
Hence, if $b'$ denotes the row obtained from $b$ by $p$-trimming, $B'\setminus\{b'\}$ is a tu-set for all $b'\in B'$.
\end{proof}

\begin{claimnumb}\label{cl1.2}
If $B$ is a maximal te-interlace of $A$ and $\ell\in A\setminus B$, then $B'=(B\cup \{\ell\})\trim(\ell,j)$ is a maximal te-interlace of $A/\!\!/(\ell,j)$ for all $j\in\supp(\ell)$.
Moreover, $\eqdet(B')=\eqdet(B)$.
\end{claimnumb}
\begin{proof}
Let $p=(\ell,j)$ be a pivot of $A$, $B'=(B\cup \{\ell\})\trim p$, and $A'=A\trim p$.
We may assume that the $p$-pivot modifies $B$, as otherwise $B'$ is obtained from $B$ by removing a column of zeros and the result holds.
Recall that all the rows of $B$ have the same support, which we denote by $\supp(B)$.
Since each pair of rows of $B$ is a te-lace, they remain te-laces after $p$-trimming by Claim~\ref{cl1.1}, thus $B'$ is not a tu-set, and each pair of rows of $B'$ forms a te-lace.

Let us prove that $B'$ is a te-set\footnote{We mention a subtlety here: in any full row rank $\pm1$~matrix, each pair of rows forms a te-lace. However, there are $\pm1$~matrices which are not te-sets, hence not te-interlaces.} and $\eqdet(B')=\eqdet(B)$. Since $\{\ell,b\}$ is a tu-set, $\ell$ and $b$ coincide on their common support (up to multiplying $b$ by $-1$), for all $b\in B$.
Therefore $B'$ is composed of $B$ on $\supp(B)\setminus\supp(\ell)$, of $-\ell$ on $\supp(\ell)\setminus\supp(B)$, and of zeros elsewhere.
The new nonzero columns indexed at $\supp(\ell)\setminus\supp(B)$ are either equal or opposite to the column of $B$ on which the $p$-pivot occured.
Since $B$ is a te-interlace, it is a te-set, and hence $B'$ is a te-set.
This also shows that all the nonzero maximal determinants of $B$ and $B'$ have the same absolute value.

Therefore, $B'$ is a te-interlace, and all that remains to prove is that it is a maximal te-interlace of $A'$, that is, no row of $A'\setminus B'$ has $\supp(B')$ as support.
By contradiction, suppose not and let $s'$ be a row of $A'\setminus B'$ with $\supp(s')=\supp(r')$ for some $r'\in B'$, let $s$ be the corresponding row of $A\setminus B$.
Lemma~\ref{3rows} applies as $\{\ell,r\}$ is a tu-set, hence either $\{r,s\}$ os $\{\ell,r,s\}$ is a te-lace.
The first possibility is impossible because $B$ is maximal and $s\notin B$, thus $\{r,s\}$ is a tu-set.
Neither is the other possibility, as it contradicts the fact that the te-bricks of $A$ are mutually-tu.
\end{proof}


Now, we proceed by induction on the cardinality of the set, that is, on the number of rows of the associated matrix.
The case of a singleton is immediate.
Assume that $A$ satisfies~$(\ast)$, and that all full row rank sets with fewer elements that satisfy~$(\ast)$ are te-sets.
Then, all there is to prove is that $A$ is equimodular.
We will trim $A$ while preserving~$(\ast)$, hence find equimodular matrices by the induction hypothesis.
Then, we will retrieve the equimodularity of $A$ using Lemma~\ref{pivoteqE}.

\medskip

Denote by $\{U\}\cup \cal L \cup \cal I$ the decomposition of the rows of $A$ given by~$(\ast)$, where $U$ is a tu-set, each $L\in\cal L$ is a te-lace, and each $I\in\cal I$ is a te-interlace, and these sets are disjoint and mutually-tu.
We assume that $|L|\geq 3$ for all $L\in \cal L$ by considering possible te-laces of size $2$ of $\cal L$ as te-interlaces.
Note that each of $U$, $\cal L$, and $\cal I$ can be empty.
Moreover, the full row rank of $A$ together with~$(\ast)$ imply that each $I\in\cal I$ is a maximal te-interlace of $A$.
In particular, for each $I\in \cal I$ and $\ell\in A\setminus I$, the support of $\ell$ differs from that of the rows of $I$.

\medskip

The proof is divided in two cases.

\medskip
\paragraph{\bf Case 1.} $A$ contains a row $\ell$ which is in no te-interlace.

\medskip

This case is handled in the following claim.

\begin{claimnumb}\label{cl1.3}
Then, $A\trim p$ satisfies~$(\ast)$ for all pivots $p=(\ell,j)$ of $A$.
\end{claimnumb}
\begin{proof}

Let $p=(\ell,j)$ be a pivot of $A$.
Recall that $\ell$ belongs either to $U$, or to some te-lace $L_\ell$ of $\cal L$. 
We treat both cases simultaneously.
The two previous claims imply that the te-lace and te-interlaces of $A$ and $A'=A\trim p$ are the same sets of rows, except possibly for $L_\ell$ if it exists.

When $\ell\in L_\ell$ of $\cal L$, we have $|L_\ell|\geq 3$ since $\ell$ is in no te-interlace, hence $L_\ell'=L_\ell\trim p$ is a te-lace of $A'$ by Lemma~\ref{pivotte-knot}.
Moreover, either $\ell\in U$, or $U\cup \{\ell\}$ is a tu-set by the mutually-tu property in~$(\ast)$.
In both cases, $U'=(U\cup \{\ell\})\trim p$ is a tu-set by Theorem~\ref{pivotTU}.
Let $\mathcal{L}'=\{L_\ell'\}\cup\left\{(L\cup \{\ell\})\trim p, \text{ for all $L\in \cal L$}\right\}$ and $\mathcal{I}'=\{(I\cup \{\ell\})\trim p, \text{ for all $I\in \cal I$}\}$, then $\{U'\}\cup \cal L' \cup \cal I'$ is a partition of $A'$.

To prove that $A'$ satisfies~$(\ast)$, there remains to prove that these sets are mutually-tu. 
Suppose by contradiction that a non-tu set $X'\subset A'$ intersects several sets in $\{U'\}\cup \cal L' \cup \cal I'$, while $X'$ contains no te-lace of $\cal L'$ and shares at most one row with each te-interlace of $\cal I'$.
Let $X\subset A$ be such that $X'=X\trim p$, with $\ell\in X$. 
Since $\ell$ is the only row index that differs between $X'$ and $X$, the same holds for $X\setminus\{\ell\}$ with respect to $\cal L$ and $\cal I$, hence $X\setminus\{\ell\}$ is a tu-set by~$(\ast)$ in $A$.
If $X=A$, then necessarily $X=A = U\cup L_\ell$.
Otherwise, by the induction hypothesis, $X$ is a te-set.
Since $X$ is not a tu-set as trimming preserves tu-sets, it contains a te-lace by Lemma~\ref{containte-lace}.
This te-lace contains $\ell$ since $X\setminus\{\ell\}$ is a tu-set.
If $\ell\in U$, $\ell$ belongs to no te-lace by the mutually-tu property, hence $\ell\notin U$.
Therefore, $\ell\in\ L_\ell$ and $L_\ell\subseteq X$.
But then, $L_\ell'\subseteq X'$, which contradicts the choice of $X'$.
\end{proof}

Since $A\trim (\ell,j)$ has full row rank, it is equimodular by the above claim and the induction hypothesis.
Note that this holds for all pivot $(\ell,j)$ of $A$, that is, for all $j\in\supp(\ell)$.
Therefore, Lemma~\ref{pivoteqE} implies that $A$ is equimodular.
\medskip

\paragraph{\bf Case 2.} Every row of $A$ is in a te-interlace.

\medskip

In this case, by~$(\ast)$, $A$ is the disjoint union of te-interlaces $I_1,\dots,I_t$ so that every set intersecting each $I_i$ in at most one row is a tu-set.
We may assume that $t\geq 2$, as otherwise we are done since a te-interlace is a te-set by definition.

Let $\ell$ be a row of $I_1$.
Let $p=(\ell,j)$ be a pivot of $A$ and let $I_i'$ be the rescaled matrix obtained from $I_i$ by $p$-trimming $A$, for $i=1,\dots,t$ and let $A'=\widehat{A\trim p}$.
By Claim~\ref{cl1.2}, $I_2',\ldots,I_t'$ are maximal te-interlaces of $A'$. 
Moreover, $I_2',\ldots,I_t'$ satisfy~$(\ast)$.
Indeed, their union is equal to $(A\setminus (I_1\setminus\{\ell\}))\trim p$, and $A\setminus (I_1\setminus\{\ell\})$ is a te-set by $|I_1|\geq 2$, the induction hypothesis, and Claim~\ref{cl1.0}.


%

Since $I_1$ is a te-interlace, by Corollary~\ref{corollary:trim_te-interlace}, $I_1'=\widehat{I_1\trim p}$ is either a tu-set or a te-lace.

\begin{claimnumb}\label{cl1.5}
The sets $I_1',I_2',\dots,I_t'$ are mutually-tu.
\end{claimnumb}
\begin{proof}
By contradiction, suppose that a minimal non-tu set $X'$ intersects several of these sets, with $I_1'\not\subseteq X'$ when $I_1'$ is a te-lace, and with $|X'\cap I_i'|\leq 1$, for $i\geq 2$, and let $X'=\widehat{X\trim p}$.
Since trimming preserves tu-sets, $X$ is not a tu-set.
Since $|X'\cap I_i'|\leq 1$ for all $i\geq 2$, we have $|X\cap I_i|\leq 1$ for all $i\geq 2$.
By Claim~\ref{cl1.0}, $A\setminus I_1$ satisfies~$(\ast)$, hence it is a te-set by the induction hypothesis.
Therefore $X\setminus I_1$ is a te-set, and hence in fact a tu-set, as otherwise it would contain a te-lace by Lemma~\ref{containte-lace} and contradict~$(\ast)$ in $A\setminus I_1$.

Since $I_1'$ is either a te-lace or a tu-set, it satisfies~$(\ast)$, hence $X'\setminus I_1'\neq \emptyset$.
Moreover, $X'\cap I_1'\neq \emptyset$ because $I_2',\dots,I_t'$ satisfy~$(\ast)$.
In particular, $X'\cap I_1'$ and $X'\setminus I_1'$ are both tu-sets.

To sum up, $X$ is the disjoint union of a te-interlace $I_X=X\cap I_1$ and a tu-set  $U_X=X\setminus I_1$, and moreover $\widehat{I_X\trim p}$ is a tu-set.
The latter implies that $\eqdet (I_X) = 2^{|I_X|-1}$.
By Claim~\ref{cl1.2} and the fact that trimming preserves tu-sets, successively $(r,j')$-trimming $X$ with respects to rows $r$ of $U_X$ maintains~$(\ast)$, and hence yields $\eqdet (X) = \eqdet (I_X) = 2^{|I_X|-1}$.

If $A$ is a square submatrix of $X$ and $A'$ the corresponding one in $X'$, the definition of $X$ gives the following, since the scaled rows to get $X'$ are precisely the rows of $I_X\setminus\{\ell\}$: $\det (A) = 2^{|I_X|-1}\det(A')$.
Thus $\det(A')=0,\!\pm1$ for all square submatrices $A'$ of $X'$.
Since $X'$ is a minimal nontu-set, all its proper subsets are tu-sets.
But then, $X'$ is also a tu-set, and this contradiction finishes the proof.
\end{proof}

Claim~\ref{cl1.5} implies that $A\trim p$ satisfies~$(\ast)$, and hence is equimodular by the induction hypothesis.
This holds for all $p=(\ell,j)$ with $j\in\supp(\ell)$, therefore $A$ is equimodular by Lemma~\ref{pivoteqE}.
\end{proof}

\section{Proofs of the results of Section~\ref{section:main_cones}}\label{section:proofs_cones}

\subsection{Proofs of the results of Section~\ref{section:hb_te-bricks}{: Hilbert bases}}

Recall that all the cones are pointed.
For a set $A$ of linearly independent integer vectors, we write $\gcddet(A)$ for the gcd of the determinants of size $|A|\times|A|$ in the matrix associated with $A$.
We set $\gcddet(\emptyset) = 1$.
\medskip

The following follows from the definition of Hilbert basis elements.
\begin{lemma}\label{lemma:face_Hilbert}
Hilbert basis elements of faces of a cone are Hilbert basis elements of the cone.
\end{lemma}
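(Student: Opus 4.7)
The plan is to use the supporting hyperplane characterization of faces to show that any integer decomposition of a Hilbert basis element of a face $F$ of $C$ must itself lie entirely in $F$.

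More precisely, I would first fix a Hilbert basis element $h$ of $F$, so that $h\in F\cap\Z^n$ and $h$ admits no decomposition $h=u+v$ with $u,v$ nonzero integer vectors of $F$. To prove $h$ is a Hilbert basis element of $C$, suppose by contradiction that there exist nonzero integer vectors $u,v\in C$ with $h=u+v$.

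Next, I would invoke the definition of a face given in Section~\ref{section:cones}: there exists a linear functional $c\in\R^n$ such that $F=C\cap\{x:c^\top x=0\}$ and $c^\top x\geq 0$ for every $x\in C$. Applying this to the decomposition $h=u+v$ gives $c^\top u\geq 0$ and $c^\top v\geq 0$, while $c^\top u+c^\top v=c^\top h=0$ since $h\in F$. Hence $c^\top u=c^\top v=0$, which forces $u,v\in F$.

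The final step is then immediate: the equality $h=u+v$ with nonzero integer $u,v\in F$ contradicts the fact that $h$ is a Hilbert basis element of $F$, completing the proof. There is no real obstacle here; the argument is almost purely definitional, and the only thing to be careful about is invoking the correct supporting-hyperplane description of a face rather than the equivalent characterizations via generators.
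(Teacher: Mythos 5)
Your proof is correct and is the natural expansion of the paper's one-line claim that the lemma ``follows from the definition of Hilbert basis elements''; the paper gives no further argument. One small point worth noting explicitly: in the paper's definition of a face of a cone, the defining hyperplane necessarily contains the origin (otherwise the intersection with $C$ would be empty or would not be a face of the cone), so writing it as $\{x : c^\top x = 0\}$ with $c^\top x \geq 0$ on $C$ is justified, and this also handles the improper face $F = C$ by taking $c = 0$.
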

The \emph{half-open zonotope} generated by a finite set of vectors $A$ is:
$$\cal{Z}^<(A) = \left\{\sum_{a\in A} \lambda_a a \colon 0\leq\lambda_a< 1\right\},$$
and contains the Hilbert basis of the cone generated by $A$.
\begin{lemma}[\cite{Sebo_1990}]\label{lemma:nontrivial_Hilb_zono} 
For a cone $C=\cone (A)$, we have $\cal H(C)\subseteq\cal{Z}^<(A)\cap \Z^n$.
\end{lemma}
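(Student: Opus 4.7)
The plan is to combine the defining property of a Hilbert basis element with Carath\'eodory's theorem. Fix $h\in\cal H(C)$ that is not one of the generators of $C$ (the label of the lemma and the strict inequalities in the definition of $\cal Z^<(A)$ confirm that the statement is to be read for nontrivial Hilbert basis elements, since a generator $a\in A$ cannot have a representation with all coefficients in $[0,1)$). By Carath\'eodory's theorem applied in $C$, there exists a linearly independent subset $B\subseteq A$ such that $h=\sum_{a\in B}\mu_a a$ with $\mu_a\geq 0$; extending by $\mu_a=0$ for $a\in A\setminus B$ yields a decomposition $h=\sum_{a\in A}\mu_a a$ with all $\mu_a\geq 0$. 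The goal then is to show that $\mu_a<1$ for every $a\in A$.

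The key step is a one-line subtraction argument. Suppose, for contradiction, that $\mu_{a_0}\geq 1$ for some $a_0\in A$. Since the generators in $A$ are primitive integer vectors, $a_0\in\Z^n$, hence $h-a_0\in\Z^n$. Moreover,
\[
    h-a_0=(\mu_{a_0}-1)a_0+\sum_{a\neq a_0}\mu_a a
\]
is a nonnegative combination of elements of $A$, so $h-a_0\in C$. Because $h$ is not a generator, $h\neq a_0$, so $h-a_0$ is a \emph{nonzero} integer vector of $C$. The equality $h=a_0+(h-a_0)$ then exhibits $h$ as the sum of two nonzero integer vectors of $C$, contradicting $h\in\cal H(C)$. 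Therefore every $\mu_a$ lies in $[0,1)$ and $h\in\cal Z^<(A)\cap\Z^n$, as required.

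The proof is short and I do not expect any real obstacle. The only subtle point is the preliminary use of Carath\'eodory's theorem, which guarantees that $h$ has a representation supported on a linearly independent subset of $A$; this is what ensures $h-a_0\in C$ when $\mu_{a_0}\geq 1$. Once this is in place, the subtraction step is immediate, and the argument requires nothing beyond the very definition of a Hilbert basis element.
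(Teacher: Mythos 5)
Your proof is correct; the paper cites this lemma to Seb\H o~\cite{Sebo_1990} without reproducing an argument, so there is no internal proof to compare against, but the subtraction trick you use is exactly the standard one: if some coefficient $\mu_{a_0}\geq 1$, then $h-a_0$ is a nonzero integer point of $C$ and $h=a_0+(h-a_0)$ contradicts $h$ being a Hilbert basis element. Two small remarks. First, you are right to read the statement as applying only to the nontrivial Hilbert basis elements: since $A$ is linearly independent, a generator $a\in A$ has the unique representation $1\cdot a$ and so cannot lie in the half-open zonotope; this reading is consistent with how the lemma is later invoked in the proof of Lemma~\ref{lemma:finding_hilbert_basis}, where the element under consideration is taken outside the set $S$ that always contains the generators. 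Second, the appeal to Carath\'eodory's theorem is superfluous in the paper's setting: Section~\ref{section:cones} declares all cones simplicial, so $A$ is linearly independent and the nonnegative representation $h=\sum_{a\in A}\mu_a a$ is already unique; Carath\'eodory does no harm and would be needed for a non-simplicial cone, but here one can skip it and argue directly on the unique coefficients.
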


A common result in group theory gives the following.
\begin{theorem}[Folklore] \label{theorem:nbr_points_zono}
Let $A$ be a set of linearly independent vectors of $\Z^n$.
The number of integer points in the half-open zonotope $\cal Z^<(A)$ is equal to $\gcddet(A)$.
\end{theorem}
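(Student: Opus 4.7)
The plan is to interpret the count group-theoretically and then reduce to the Smith normal form.

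First, I would verify that $\cal Z^<(A)$ is a fundamental domain for the lattice $\Lambda_A := \Z A = \{\sum_{a\in A} c_a a \colon c_a \in \Z\}$ acting by translation on $\Span_\R(A)$. Indeed, for every $x = \sum_{a\in A} \lambda_a a \in \Span_\R(A)$, writing $\lambda_a = \lfloor \lambda_a \rfloor + \{\lambda_a\}$ yields a unique decomposition $x = y + z$ with $y \in \Lambda_A$ and $z \in \cal Z^<(A)$. Setting $L := \Span_\R(A) \cap \Z^n$, we have $\Lambda_A \subseteq L$, and $\cal Z^<(A) \cap \Z^n = \cal Z^<(A) \cap L$ since $z = x - y$ is integer whenever $x \in L$ and $y \in \Lambda_A \subseteq \Z^n$. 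Hence the integer points of $\cal Z^<(A)$ form a complete set of coset representatives of $L/\Lambda_A$, so
\[ |\cal Z^<(A) \cap \Z^n| = [L : \Lambda_A]. \]

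Next, I would compute this index via the Smith normal form. Writing $k = |A|$ and viewing $A$ as a $k \times n$ integer matrix of rank $k$, there exist unimodular $U \in \GL_k(\Z)$ and $V \in \GL_n(\Z)$ with $UAV = \begin{bmatrix} D & \mathbf{0} \end{bmatrix}$, where $D = \diag(d_1, \ldots, d_k)$ with $d_1 \mid \cdots \mid d_k$ positive integers. Since the gcd of the $k \times k$ minors is invariant under unimodular row and column operations, one obtains $\gcddet(A) = d_1 \cdots d_k$.

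Finally, I would carry out the coordinate change: right multiplication by $V$ is a change of basis of the ambient lattice $\Z^n$ which carries $L$ to $\Z^k \times \{\mathbf{0}\}^{n-k}$, while left multiplication by $U$ merely changes the generating set of $\Lambda_A$ without altering the lattice. In the new coordinates, $\Lambda_A$ is generated by $d_1 e_1, \ldots, d_k e_k$, so
\[ L / \Lambda_A \cong \bigoplus_{i=1}^k \Z / d_i\Z, \]
which has order $d_1 \cdots d_k = \gcddet(A)$, as desired.

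The only nontrivial ingredient is the existence of the Smith normal form, which is classical. The main care needed is the bookkeeping of row versus column operations: $V$ must act on $\Z^n$ (to identify $L$ with $\Z^k \times \{\mathbf{0}\}^{n-k}$), whereas $U$ only rewrites the generators of the sublattice $\Lambda_A$ and hence leaves both $\Lambda_A$ and the index $[L:\Lambda_A]$ unchanged.
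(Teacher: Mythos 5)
The paper does not prove this statement: it is labeled ``Folklore'' and introduced only with the remark that ``a common result in group theory gives the following,'' so there is no proof in the paper to compare against. Your argument is a correct and complete proof. The reduction $|\mathcal Z^<(A)\cap\Z^n| = [L:\Lambda_A]$ via the fundamental-domain observation is sound (the key point that $z = x-y$ inherits integrality from $x\in L$ and $y\in\Lambda_A\subseteq\Z^n$ is exactly what makes $\mathcal Z^<(A)\cap\Z^n$ a transversal of $L/\Lambda_A$), and the Smith normal form computation of the index together with the unimodular invariance of the gcd of maximal minors is the standard textbook route; your bookkeeping of which operation acts on $\Z^n$ (right multiplication by $V$) versus which merely re-presents the sublattice $\Lambda_A$ (left multiplication by $U$) is handled correctly.
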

\begin{remark}
Notice that if $A$ is unimodular or a tu-set, the number of integer points in $\cal Z^<(A)$ is $1$,
and when $A$ is a te-set, we have $\gcddet(A_I)=\eqdet(A_I)$, for every $I\subseteq\{1,\ldots,k\}$.
\end{remark}

Any two disjoint sets of integer vectors $A$ and $B$ are \emph{lattice orthogonal} when $A\sqcup B$ is linearly independent and $\gcddet(A\sqcup B) = \gcddet(A)\gcddet(B)$.
In that case, there is a nice relation between the integer points of the associated half-open zonotopes.
\begin{corollary}\label{corollary:mutually_tu}
Let $A$ and $B$ be lattice orthogonal sets of vectors of $\Z^n$, then we have
$$|\cal Z^<(A\sqcup B)\cap \Z^n| =|\cal Z^<(A)\cap \Z^n|\cdot|\cal Z^<(B)\cap \Z^n|.$$
\end{corollary}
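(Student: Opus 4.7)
The plan is to obtain the corollary as a direct three-fold application of Theorem~\ref{theorem:nbr_points_zono}. Since $A \sqcup B$ is linearly independent by hypothesis, both $A$ and $B$ are linearly independent sets of integer vectors, so Theorem~\ref{theorem:nbr_points_zono} applies to each of $A$, $B$, and $A \sqcup B$. This yields
$$|\cal Z^<(A)\cap \Z^n| = \gcddet(A), \quad |\cal Z^<(B)\cap \Z^n| = \gcddet(B), \quad |\cal Z^<(A\sqcup B)\cap \Z^n| = \gcddet(A\sqcup B).$$

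The definition of lattice orthogonality gives $\gcddet(A\sqcup B) = \gcddet(A)\gcddet(B)$, and multiplying the first two equalities and comparing with the third concludes the proof. There is really no obstacle here: the corollary packages the multiplicativity condition built into the definition of lattice orthogonality into a combinatorial statement about integer points in half-open zonotopes, and Theorem~\ref{theorem:nbr_points_zono} provides the translation dictionary.

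If a more conceptual argument were desired (not needed for the proof, but illuminating), one would note that every point $z \in \cal Z^<(A\sqcup B)$ decomposes uniquely as $z = p_A + p_B$ with $p_A \in \cal Z^<(A)$ and $p_B \in \cal Z^<(B)$, because $A \sqcup B$ is linearly independent. The subtlety is that $z$ being integer does not automatically imply that $p_A$ and $p_B$ are integer — it requires the sublattices $\Z^n \cap \Span(A)$ and $\Z^n \cap \Span(B)$ to be in ``general position'' with respect to one another. The lattice orthogonality condition is precisely what guarantees this via the determinant identity, and Theorem~\ref{theorem:nbr_points_zono} then provides a clean way to bypass the explicit bijection argument.
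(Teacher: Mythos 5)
Your proof is correct and matches the paper's own argument exactly: apply Theorem~\ref{theorem:nbr_points_zono} to $A$, $B$, and $A \sqcup B$, then invoke the multiplicativity of $\gcddet$ that defines lattice orthogonality. The extra conceptual remark you append is a nice aside but not part of the argument, so there is nothing further to compare.
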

\begin{proof}
By applying three times Theorem~\ref{theorem:nbr_points_zono}, and by the lattice orthogonality, we have: 
\begin{align*}
|\cal Z^<(A\sqcup B)\cap \Z^n| &= \gcddet(A\sqcup B)\\
&=  \gcddet(A)\gcddet(B) \\
&= |\cal Z^<(A)\cap \Z^n|\cdot|\cal Z^<(B)\cap \Z^n|,
\end{align*} as desired.
\end{proof}

\begin{lemma}\label{lemma:hb_lattice_orthogonal}
The Hilbert basis of the Minkowski sum of cones generated by lattice orthogonal sets is the union of the Hilbert basis of each.
\end{lemma}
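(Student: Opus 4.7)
The plan is to show the two inclusions separately. I will first handle the case of two lattice orthogonal sets $A$ and $B$; the general statement then follows by an easy induction, using the fact that if $\gcddet(A_1 \sqcup \cdots \sqcup A_k) = \prod_i \gcddet(A_i)$, then $A_1 \sqcup \cdots \sqcup A_{k-1}$ and $A_k$ are lattice orthogonal. So from now on, write $C_A = \cone(A)$, $C_B = \cone(B)$, and $C = C_A + C_B = \cone(A \sqcup B)$. Because $A \sqcup B$ is linearly independent, $C$ is simplicial and both $C_A$ and $C_B$ are faces of $C$.

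The easy inclusion $\mathcal{H}(C_A) \cup \mathcal{H}(C_B) \subseteq \mathcal{H}(C)$ is immediate from Lemma~\ref{lemma:face_Hilbert}, since $C_A$ and $C_B$ are faces of $C$.

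For the reverse inclusion, let $h \in \mathcal{H}(C)$. By Lemma~\ref{lemma:nontrivial_Hilb_zono}, $h \in \mathcal{Z}^<(A \sqcup B) \cap \mathbb{Z}^n$, and by linear independence of $A \sqcup B$ the decomposition $h = h_A + h_B$ with $h_A \in \mathcal{Z}^<(A) \cap C_A$ and $h_B \in \mathcal{Z}^<(B) \cap C_B$ is unique. The crucial step, and the main obstacle, is to show that $h_A$ and $h_B$ are themselves integer (a priori they are only rational). For this, consider the map
$$\phi : \bigl(\mathcal{Z}^<(A) \cap \mathbb{Z}^n\bigr) \times \bigl(\mathcal{Z}^<(B) \cap \mathbb{Z}^n\bigr) \longrightarrow \mathcal{Z}^<(A \sqcup B) \cap \mathbb{Z}^n, \qquad (x,y) \mapsto x+y.$$
This map is well-defined and injective by the uniqueness of the decomposition in the linearly independent set $A \sqcup B$. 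Now Corollary~\ref{corollary:mutually_tu}, which is precisely where the lattice orthogonality hypothesis is used, asserts that the domain and the codomain of $\phi$ have the same (finite) cardinality. Hence $\phi$ is a bijection, and in particular every integer point of $\mathcal{Z}^<(A \sqcup B)$ lies in its image. Applied to $h$, this forces $h_A$ and $h_B$ to be integer.

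To finish, suppose both $h_A \neq 0$ and $h_B \neq 0$. Then $h = h_A + h_B$ would express $h$ as a sum of two nonzero integer vectors of $C$ (since $C_A, C_B \subseteq C$), contradicting that $h$ is a Hilbert basis element. Therefore one of them, say $h_B$, is zero, so $h = h_A \in C_A \cap \mathbb{Z}^n$. Since $h$ admits no decomposition as the sum of two nonzero integer vectors of the larger cone $C$, the same holds a fortiori within $C_A$, so $h \in \mathcal{H}(C_A)$. This completes the reverse inclusion and hence the proof.
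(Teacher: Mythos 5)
Your proof is correct and follows essentially the same approach as the paper: the easy inclusion via the face lemma, and the reverse inclusion via the cardinality count of Corollary~\ref{corollary:mutually_tu} applied to the half-open zonotopes from Lemma~\ref{lemma:nontrivial_Hilb_zono}. You merely make explicit the bijection and the final ``one summand must vanish'' step that the paper leaves implicit, and add a brief reduction from $k$ sets to two.
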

\begin{proof}
Let $C = \cone(A_1\sqcup \cdots\sqcup A_k) = \cone(A_1)+\cdots+\cone(A_k) = C_1+\cdots+C_k$ be the Minkowski sum of cones generated by lattice orthogonal sets $A_1,\ldots,A_k$ of $\mathbb{R}^n$.
Since $A_1\sqcup \cdots\sqcup A_k$ is linearly independent, each $C_i$ is a face of $C$.
By Lemma~\ref{lemma:face_Hilbert}, we have $\bigcup_i \cal H(C_i)\subseteq\cal H(C)$. An immediate induction relying on Corollary~\ref{corollary:mutually_tu} yields $|\cal Z^<(\bigcup_i A_i)\cap \Z^n|=\prod_i|\cal Z^<(A_i)\cap \Z^n|$.
Therefore, any integer point in $\cal Z^<(\bigcup_i A_i)$ can be expressed as a nonnegative integer combination of integer points in each $\cal Z^<(A_i)$. 
This yields $\cal H(C)\subseteq \bigcup_i \cal H(C_i)$, which concludes.
\end{proof}
The formula for the determinant of a te-set given after the proof of Theorem~\ref{theorem:decomposition_te} implies the following, which also proves Lemma~\ref{lemma:hb_minkowski_sum}.	
\begin{remark}\label{remark:mutually-tu}
Mututally-tu te-bricks are lattice orthogonal.
\end{remark}
\medskip

The following lemma will be very useful in the proof of Theorem~\ref{theorem:hb_te-bricks}.			
\begin{lemma}\label{lemma:finding_hilbert_basis}
Let $C = \cone (A)$ be a cone and $S\subseteq \cal{Z}^<(A) \cap \Z^n$ be a set in which no element is a nonnegative integral combination of other elements of $S$.
If the set of nonnegative integer combinations of $S$ contains $\cal{Z}^<(A)\cap \Z^n$, then $\cal{H}(C)=S$.
\end{lemma}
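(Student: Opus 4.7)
The plan is to prove the equality $\cal H(C) = S$ by establishing both inclusions, the main tool being Lemma~\ref{lemma:nontrivial_Hilb_zono}, which places every Hilbert basis element inside $\cal Z^<(A) \cap \Z^n$. For the inclusion $\cal H(C) \subseteq S$, I would take $h \in \cal H(C)$; the lemma gives $h \in \cal Z^<(A) \cap \Z^n$, so by hypothesis $h$ admits a representation $h = \sum_{s \in S} c_s s$ with $c_s \in \Z_{\geq 0}$. If the total multiplicity $\sum_s c_s$ exceeded $1$, I could pick any $s_0 \in S$ with $c_{s_0} \geq 1$ and write $h = s_0 + (h - s_0)$, exhibiting $h$ as a sum of two nonzero integer vectors of $C$ and contradicting $h \in \cal H(C)$. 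Hence $\sum_s c_s = 1$ and $h \in S$.

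For the reverse inclusion $S \subseteq \cal H(C)$, I would take $s \in S$ and assume for contradiction a splitting $s = v + w$ with nonzero integer vectors $v, w \in C$. Expanding $s$, $v$, $w$ in the basis $A$ as $\sum_a \lambda_a a$, $\sum_a \mu_a a$, $\sum_a \nu_a a$ respectively, the linear independence of $A$ forces $\mu_a + \nu_a = \lambda_a < 1$, so $\mu_a, \nu_a \in [0,1)$, and both $v$ and $w$ lie in $\cal Z^<(A) \cap \Z^n$. By hypothesis, each is a nonnegative integer combination of $S$, and adding these yields $s = \sum_{s' \in S} k_{s'} s'$ with $\sum_{s'} k_{s'} \geq 2$; here I use that $0 \notin S$ (otherwise the empty combination would express $0$ as a combination of the other elements of $S$, violating the minimality hypothesis), so that the decompositions of the nonzero vectors $v$ and $w$ each contribute at least one term.

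The main obstacle will be turning this representation into a contradiction with the minimality hypothesis on $S$. In the easy case $k_s = 0$, the relation directly exhibits $s$ as a nonnegative integer combination of $S \setminus \{s\}$, contradicting the hypothesis. The subtle case is $k_s \geq 1$: I would rewrite $(1 - k_s) s = \sum_{s' \neq s} k_{s'} s'$ and observe that the right-hand side lies in the pointed cone $C$ while the left-hand side is a nonpositive scalar multiple of $s \in C$; pointedness of $C$ (using $C \cap (-C) = \{0\}$) together with $s \neq 0$ then forces $k_s = 1$ and $k_{s'} = 0$ for all $s' \neq s$. Unfolding the construction, this reduces the decomposition to $v = m\, s$ and $w = n\, s$ with nonnegative integers satisfying $m + n = 1$, which is incompatible with $v, w \neq 0$.
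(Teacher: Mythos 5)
Your proof is correct. The forward inclusion $\cal H(C)\subseteq S$ matches the paper's argument, made explicit: a Hilbert basis element lies in $\cal Z^<(A)\cap\Z^n$, hence by hypothesis is a nonnegative integer combination of $S$; if the total multiplicity were at least two the element would decompose into two nonzero integer points of $C$, so the multiplicity is one and the element lies in $S$. For the reverse inclusion you diverge from the paper. The paper's argument is a one-liner: having $\cal H(C)\subseteq S$, the Hilbert basis generates every integer point of $C$ by nonnegative integer combinations, so any $s\in S\setminus\cal H(C)$ would be such a combination of $\cal H(C)\subseteq S\setminus\{s\}$, contradicting the minimality hypothesis on $S$ at once. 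You instead verify the indecomposability of each $s\in S$ from first principles: a splitting $s=v+w$ forces $v,w\in\cal Z^<(A)\cap\Z^n$ (using simpliciality of $C$, a standing assumption in the paper), hence each is a nonnegative integer combination of $S$; adding the two and analyzing the coefficient of $s$ via the pointedness of $C$ collapses the decomposition until one of $v,w$ must vanish. Both routes are valid. The paper's is faster because it leans on the known generating property of the unique Hilbert basis, whereas yours re-derives the Hilbert-basis-element property of the members of $S$ directly, at the cost of the case analysis on $k_s$ and the explicit observation that $0\notin S$.
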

\begin{proof}
First, we have $\cal{H}(C)\subseteq S$, as if $h\in\cal{H}(C)\setminus S$, then $h$ belongs to $\cal{Z}^<(A)\cap\Z^n$ by Lemma~\ref{lemma:nontrivial_Hilb_zono}, yet cannot be expressed as an nonnegative integral combination of elements in $S$ since it is a Hilbert basis element, a contradiction.
Then, the definition of $S$ implies $\cal{H}(C)=S$.
\end{proof}

We will also use the following well-known formula in the proof of Theorem~\ref{theorem:hb_te-bricks}.
\begin{lemma}\label{lemma:odd_and_even_sum}
For every positive integer $n$, we have the following:
$$\sum_{i\text{ even}}^{n} \binom{n}{i} = \sum_{i\text{ odd}}^{n}\binom{n}{i} = 2^{n-1}.$$
\end{lemma}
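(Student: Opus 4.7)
The plan is to derive both equalities from the binomial theorem by evaluating $(1+x)^n$ at two well-chosen values of $x$. First, I would apply the binomial theorem at $x=1$, which gives
$$\sum_{i=0}^{n} \binom{n}{i} = (1+1)^n = 2^n.$$
This handles the total mass, so it remains only to show that the even-indexed and odd-indexed subsums coincide; each would then automatically equal $2^{n-1}$.

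To obtain the second identity, I would evaluate at $x=-1$, which yields
$$\sum_{i=0}^{n} (-1)^i \binom{n}{i} = (1-1)^n = 0,$$
since $n\geq 1$. Splitting this sum according to the parity of $i$ gives $\sum_{i \text{ even}}\binom{n}{i} = \sum_{i \text{ odd}} \binom{n}{i}$. Combined with the first identity, each of these sums equals $\tfrac{1}{2}\cdot 2^n = 2^{n-1}$, as desired.

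There is essentially no obstacle here: the identity is classical and the argument is a two-line application of the binomial theorem. If a combinatorial proof were preferred, one could instead exhibit a bijection between the even-sized and odd-sized subsets of $\{1,\ldots,n\}$ via toggling membership of the element $1$, but the generating-function argument above is the most direct route and integrates cleanly with the surrounding computations.
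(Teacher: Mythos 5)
Your proof is correct and takes essentially the same approach as the paper: both evaluate the binomial expansion of $(1+x)^n$ at $x=1$ and $x=-1$ and combine the resulting identities to isolate the even and odd subsums.
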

\begin{proof}
This comes from that both quantities equal the half sum of
$$\sum_{i\text{ even}}^{n} \binom{n}{i} + \sum_{i\text{ odd}}^{n}\binom{n}{i} = (1+1)^n= 2^n,$$
and
$$\sum_{i\text{ even}}^{n} \binom{n}{i} -  \sum_{i\text{ odd}}^{n} \binom{n}{i} = \sum_{i\text{ even}}^{n} \binom{n}{i}(-1)^i + \sum_{i\text{ odd}}^{n} \binom{n}{i}(-1)^i = (1-1)^n= 0.$$
\end{proof}

We can now give the proof of Theorem~\ref{theorem:hb_te-bricks}, which we restate below.
\begin{theorem}[Theorem~\ref{theorem:hb_te-bricks}]\label{theorem:hb_te-bricks_full}
Let $A = \{a^1,\ldots,a^n\}$ be a te-brick and $C = \cone (A)$.	
\begin{enumerate}[label=\arabic*., ref=\arabic*.]
\item If $A$ is a tu-set, then $\cal H(C)= A$.\label{equation:hb_tu-set_l}
\item If $A$ is a te-lace, then $\cal H(C)= A\cup\{\frac{1}{2}\sum_j a^j\}$.\label{equation:hb_te-lace_l}
\item If $A$ is a thin te-interlace, then $\cal H(C) = A\cup \left\{\frac{1}{2} (a^i + a^j)\right\}_{1\leq i<j\leq n}$.\label{equation:hb_thin_te-interlace_l}
\item If $A$ is a thick te-interlace, then one of the following holds: \label{equation:hb_thick_te-interlace_l}
\begin{enumerate}[label=\alph*., ref=\theenumi\alph*]
	\item $\cal H(C) = A\cup \left\{\frac{1}{2} (a^i + a^j)\right\}_{1\leq i< j\leq n}\cup\left\{\frac{1}{4}\sum_{j} a^j\right\},$ \label{equation:hb_thick_te-interlaces_cas1_l}
	\item $\cal H(C) = A\cup\left\{\frac{1}{2} (a^i + a^j)\right\}_{1\leq i< j\leq n}\cup\left\{\frac{3}{4}a^i+\frac{1}{4}\sum_{j\neq i} a^j\right\}_{i\in\{1,\ldots,n\}}.$ \label{equation:hb_thick_te-interlaces_cas2_l}
\end{enumerate}
Moreover, the number of $1$'s and $-1$'s in each column of $A$ have the same parity $p\in\{0,1\}$, and~\ref{equation:hb_thick_te-interlaces_cas1} occurs if and only if $n\equiv 2p \pmod{4}$.
\end{enumerate}
\end{theorem}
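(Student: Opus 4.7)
My plan is to apply Lemma~\ref{lemma:finding_hilbert_basis} in each case: exhibit a candidate set $S$ whose nontrivial members lie in $\mathcal{Z}^<(A) \cap \mathbb{Z}^n$, show that every integer point of $\mathcal{Z}^<(A)$ is a nonnegative integer combination of elements of $S \cup A$, and show that no element of $S \cup A$ is such a combination of the others. By Theorem~\ref{theorem:nbr_points_zono}, $\mathcal{Z}^<(A)$ contains exactly $\eqdet(A)$ integer points, so the ``generating'' direction reduces to counting the distinct points produced by $S$.

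Case~\ref{equation:hb_tu-set_l} is immediate: $\eqdet(A) = 1$ leaves $\mathcal{Z}^<(A) \cap \mathbb{Z}^n = \{\mathbf{0}\}$, so there are no nontrivial Hilbert basis elements. For Case~\ref{equation:hb_te-lace_l}, $\eqdet(A) = 2$ forces a unique nontrivial integer point; Camion's Theorem~\ref{theorem:camion} applied to the minimally non-totally unimodular submatrix of $A$, combined with Lemma~\ref{lemma:adding_column_to_min-nonTU} for the remaining columns, shows every column sum of $A$ is even, so $\tfrac12\sum_j a^j \in \mathcal{Z}^<(A) \cap \mathbb{Z}^n$ and is necessarily the missing Hilbert basis element.

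For Cases~\ref{equation:hb_thin_te-interlace_l} and~\ref{equation:hb_thick_te-interlace_l} the central observation is that, on the common support of its rows, a te-interlace has only $\pm 1$ entries; hence for any $T \subseteq \{1, \ldots, n\}$ the column sum $\sum_{v \in T} a^v_k$ has the same parity as $|T|$ in every nonzero column $k$. Therefore $\tfrac12 \sum_{v \in T} a^v$ is an integer point of $\mathcal{Z}^<(A)$ iff $|T|$ is even, giving exactly $2^{n-1}$ such half-integer points by Lemma~\ref{lemma:odd_and_even_sum}. I would then show that every nonnegative integer combination $\sum_{i<j} \lambda_{ij} m^{ij}$ lying in $\mathcal{Z}^<(A)$ corresponds to a partial matching $M$ in $K_n$ (because the coefficient $\tfrac12\sum_{j\neq k}\lambda_{kj}$ of $a^k$ must be in $[0,1)$) and equals $\tfrac12\sum_{v\in V(M)} a^v$; by linear independence of $A$, distinct vertex sets $V(M)$ give distinct points, and every even-sized subset is covered by some matching. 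This settles the thin case. In the thick case ($\eqdet = 2^n$) the $2^{n-1}$ half-integer points account for only half of the integer points; to produce the other half I would analyze the parity of $r_k - s_k \pmod 4$, where $r_k, s_k$ count the $+1$'s and $-1$'s in column $k$. Using Theorem~\ref{theorem:Truemper_complement} (so $n$ is even), Corollary~\ref{corollary:form-thick-te-inter}, and Camion's theorem applied to the core, I would establish that $p := r_k \bmod 2$ does not depend on $k$, and that $\tfrac14\sum_j a^j$ is integer iff $n \equiv 2p \pmod 4$ (Case~\ref{equation:hb_thick_te-interlaces_cas1_l}); otherwise, each vector $\tfrac34 a^i + \tfrac14 \sum_{j\neq i} a^j$ is integer (Case~\ref{equation:hb_thick_te-interlaces_cas2_l}). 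Stratifying the nonnegative integer combinations of $S$ by the number of quarter generators used, a parallel matching argument produces an additional $2^{n-1}$ distinct quarter-integer points, parametrized by odd-sized subsets of $\{1,\ldots,n\}$, totaling $\eqdet(A) = 2^n$.

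The main obstacle is that many different nonnegative integer combinations of $S$ can yield the same point (for example, the three perfect matchings of $K_4$ all produce $\tfrac12\sum_j a^j$ in a thick te-interlace of size four), so counting must be done over \emph{distinct points}, with the correct parameter being the vertex set $V(M)$ together with the number of quarter terms used in Case~\ref{equation:hb_thick_te-interlace_l}. To rule out that any element of $S$ is a nonnegative integer combination of the others, I would read off coefficients in the basis $\{a^j\}$: the $\tfrac12$ coefficients of midpoints cannot arise from combinations of integer generators; the $\tfrac14$ coefficients of the quarter generators cannot arise from combinations of integer and half-integer elements; and the generators themselves are minimal by construction.
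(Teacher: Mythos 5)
Your proposal follows the paper's proof essentially step for step: both reduce via Lemma~\ref{lemma:finding_hilbert_basis} and Theorem~\ref{theorem:nbr_points_zono} to counting distinct nonnegative integer combinations in the half-open zonotope, your $K_n$-matching framing is a transparent rewording of the paper's count of coordinate vectors with an even number of $\tfrac12$'s, and your $r_k$-parity analysis of the thick case invokes the same structural inputs (Camion's theorem, Theorem~\ref{theorem:Truemper_complement}, Corollary~\ref{corollary:form-thick-te-inter}) to reach the same $n\equiv 2p\pmod 4$ dichotomy. One minor slip: in Case~\ref{equation:hb_thick_te-interlaces_cas1_l} the quarter-integer points are indexed by \emph{even} subsets of $\{1,\ldots,n\}$ (the coordinates equal to $\tfrac34$), not odd ones, but since even and odd subsets are equinumerous this does not affect the count of $2^{n-1}$.
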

\begin{proof}~Let $A = \{a^1,\ldots,a^n\}$ be a te-brick and $C = \cone (A)$.
\subsubsection*{\ref{equation:hb_tu-set_l}~tu-sets}
Simplicial cones generated by tu-sets have no nontrivial Hilbert basis elements.
\subsubsection*{\ref{equation:hb_te-lace_l}~te-laces}
The only nontrivial Hilbert basis element of a simplicial cone generated by a te-lace is the half sum of its generators.

\medskip

Now, suppose $A$ is a te-interlace of size $n$.
\begin{claimnumb}\label{claim:0}
$\left\{\frac{1}{2} (a^i + a^j)\right\}_{1\leq i\leq j\leq n} \subseteq \cal H(C)$.
\end{claimnumb}
\begin{proof}
For $1\leq i<j\leq n$, note that $\frac{1}{2} (a^i + a^j)$ is an integer point of $C$ since the $a^i$'s are $\pm 1$ vectors.
Moreover, $\cone \{a^i,a^j\}$ is a face of $C$, and $\cal H\left(\cone \{a^i,a^j\}\right) = \{a^i,a^j,\frac{1}{2}(a^i+a^j)\}$ since it is a cone generated by the te-lace $\{a_i,a_j\}$.
By Lemma~\ref{lemma:face_Hilbert}, each $\frac{1}{2}(a^i+a^j)$ is a Hilbert basis elements of $C$.
\end{proof}

\subsubsection*{\ref{equation:hb_thin_te-interlace_l}~Thin te-interlaces.}
In this case, $A$ has equideterminant $2^{n-1}$.
By Theorem~\ref{theorem:nbr_points_zono}, the number of points in $\cal Z^<(A)$ is $2^{n-1}$.
The result follows from Claim~\ref{claim:0}, Lemma~\ref{lemma:finding_hilbert_basis}, and the following claim.
\begin{claimnumb}\label{claim:2}
The number of distinct integer points of $\cal Z^<(A)$ which are nonnegative integer combinations of $\left\{\frac{1}{2} (a^i + a^j)\right\}_{1\leq i\leq j\leq n}$ is $2^{n-1}$.
\end{claimnumb}
\begin{proof}
Let us denote the set of integer points of $\cal{Z}^<(A)$ which are nonnegative combinations of $\left\{\frac{1}{2} (a^i + a^j)\right\}_{1\leq i\leq j\leq n}$ by $X$.
Since the cone is simplicial, a point $x\in X$ is uniquely expressed as
$$x = \sum_{k=1}^n \lambda_k a^k.$$
Since $x$ is in $\cal Z^<(A)$, we have $\lambda_k\in\{0,\frac{1}{2}\}$, for $k\in\{1,\ldots,n\}$. 
Moreover, since $x$ is integer and the $a^k$'s are $\pm1$, an even number of $\lambda_k$'s are equal to $\frac{1}{2}$.
Thus, the points of $X$ are obtained by setting to $\frac{1}{2}$ an even number $i$ of the $\lambda_k$'s, and by setting the rest to $0$ the $n-i$ remaining $\lambda_k$'s.
Each choice yields a different point since the $a^k$'s are linearly independent. Therefore, by Lemma~\ref{lemma:odd_and_even_sum}, we have		
$$|X| = \sum_{i\text{ even}} \binom{n}{i} = 2^{n-1}.$$
\end{proof}

\subsubsection*{\ref{equation:hb_thick_te-interlace_l}~Thick te-interlaces.}	
By Corollary~\ref{corollary:form-thick-te-inter}, we may assume that $A$ is square and invertible.
In this case, $A$ has determinant $2^{n}$.
By Theorem~\ref{theorem:nbr_points_zono}, the number of points in $\cal Z^<(A)$ is $2^{n}$.
We first compute a candidate set for the Hilbert basis of $\cone(A)$.
We have $\left\{\frac{1}{2} (a^i + a^j)\right\}_{1\leq i\leq j\leq n} \subseteq \cal H(C)$ by Claim~\ref{claim:0}. Moreover, the following holds.
\begin{claimnumb}\label{claim:3.5}
Either $\frac{1}{4}\sum_{j} a^j$ or $\left\{\frac{3}{4}a^i+\frac{1}{4}\sum_{j\neq i} a^j\right\}_{1\leq i\leq n}$ is contained in $\cal{Z}^<(A)\cap~\Z^n$.
\end{claimnumb}
\begin{proof}
By definition of the half-open zonotope, all these points are in $\cal{Z}^<(A)$.
By Lemma~\ref{lemma:trimming_and_complement_orbit} and Corollary~\ref{corollary:interlace_core}, we have:
\begin{equation}\label{equation:formula_R_with_sign}
	A = \diag(\varepsilon) \begin{bmatrix}
		1& \mathbf{1}^\top \\
		\mathbf{1} &  \mathbf{J} -2B
	\end{bmatrix}\diag(\mu),
\end{equation}
for two signing matrices $\diag(\varepsilon)$ and $\diag(\mu)$ and a complement minimally non-totally unimodular $0,\!1$ matrix $B$, of odd size by Theorem~\ref{theorem:Truemper_complement}.
The numbers of $1$'s and $-1$'s have the same parity in every column of $A$, because $\operatorname{diag}(\varepsilon) A \operatorname{diag}(\mu)$ contains an even number of $1$'s and $-1$'s in each column. In the first column, this is due to $B$ being of odd size. In the other columns, it holds because $B$ is $0,\!1$ and minimally non-totally unimodular, and hence contains an even number of $1$'s in each column, by Theorem~\ref{theorem:camion}.
Multiplying by the signing matrices then impacts the parity of the numbers of $1$'s and $-1$'s similarly in every column of $A$.

There are two cases, according to whether this parity is (E) even or (O) odd.

%

\begin{itemize}
	\item[(E)]
	Suppose $n\equiv 0\pmod{4}$.
	Then, since $A$ has an even number of $1$'s and $-1$'s in each column, the coordinates of $\sum_{j}a^j$ are all congruent to $0\pmod{4}$.
	Therefore, $\frac{1}{4}\sum_{j}a^j$ is an integer point of $C$ and is in $\cal{Z}^<(A)$.
	
	\noindent Suppose $n\equiv 2\pmod{4}$.
	Then, for $i\in\{1,\ldots,n\}$, the coordinates of $3a^i +\sum_{j\neq i}a^j$ are all congruent to $0\pmod{4}$.
	Therefore, $\frac{3}{4}a^i + \frac{1}{4}\sum_{j\neq i} a^j$ is an integer point of $C$ and is in $\cal{Z}^<(A)$, for $i\in\{1,\ldots,n\}$.
	\item[(O)] This case is similar to the previous one, except that the situations $n \equiv 0 \pmod{4}$ and $n \equiv 2 \pmod{4}$ are reversed.~\qed						
\end{itemize}
\end{proof}

According to Claim~\ref{claim:3.5}, there are two cases, namely cases~\ref{equation:hb_thick_te-interlaces_cas1_l} and~\ref{equation:hb_thick_te-interlaces_cas2_l}.
In each case, we will apply Lemma~\ref{lemma:finding_hilbert_basis} to determine the Hilbert basis. 
By Theorem~\ref{theorem:nbr_points_zono}, the half-open zonotope spanned by $A$ contains $2^n$ integer points.

\begin{itemize}
\item[\ref{equation:hb_thick_te-interlaces_cas1_l}]
Let $S=\left\{\frac{1}{2} (a^i + a^j)\right\}_{1\leq i\leq j\leq n}\cup\left\{\frac{1}{4}\sum_{j} a^j\right\}$.
\begin{claimnumb}\label{claim:4}
	The number of distinct integer points of $\cal Z^<(A)$ which are nonnegative integer combinations of $S$ is $2^n$.
\end{claimnumb}
\begin{proof}
	Let $X$ denote the set of integer points of $\cal Z^<(A)$ obtained by nonnegative integer combinations of points in $S$.
	Let $h=\frac{1}{4}\sum_{j} a^j$. A point $x\in X$ is uniquely expressed as
	$$x = \sum_{k=1}^n \lambda_k a^k.$$
	Since $x$ is in $\cal Z^<(A)$, we have $\lambda_k\in\{0,\frac{1}{4},\frac{1}{2},\frac{3}{4}\}$, for $k\in\{1,\ldots,n\}$.
	The points in $X$ with all $\lambda_k$'s in $\{0,\frac{1}{2}\}$ do not imply $h$ in the combination, hence their number is $2^{n-1}$ as in the proof of Claim~\ref{claim:2}.
	
	We need to compute the number of points of $X$ having coefficients also in $\{\frac{1}{4},\frac{3}{4}\}$.
	Since the $a^k$'s are $\pm1$, such points are expressed as $h+\tilde{x}$, where $\tilde{x}=\sum_{k=1}^n \mu_k a^k$ is a point in $X$ with all $\mu_k$'s in $\{0,\frac{1}{2}\}$, an even number of them being positive.
	We find all such $\tilde{x}$ by setting an even number of $\mu_k$'s to $\frac{1}{2}$ and the other ones to~$0$.
	
	Thus, such points $x$ are obtained by setting an even number of $\lambda_k$'s to $\frac{3}{4}$ and the others to $\frac14$, and, by Lemma~\ref{lemma:odd_and_even_sum}, their number is
	$$\sum_{i\text{ even}} \binom{n}{i}= 2^{n-1}.$$
	Finally, we have $|X| = 2^{n-1} + 2^{n-1} = 2^n$.	
	\end{proof}
\item[\ref{equation:hb_thick_te-interlaces_cas2_l}]
Let $S=\left\{\frac{1}{2} (a^i + a^j)\right\}_{1\leq i\leq j\leq n}\cup\left\{\frac{3}{4}a^i+\frac{1}{4}\sum_{j\neq i} a^j\right\}_{1\leq i\leq n}$.
\begin{claimnumb}\label{claim:5}
	The number of integer points of $\cal Z^<(A)$ which are nonnegative integer combinations of $S$ is $ 2^n$.
\end{claimnumb}
\begin{proof}
	Let us denote this set of integer points by $X$ and let $h^i=\frac{3}{4}a^i+\frac{1}{4}\sum_{j\neq i} a^j$, for $i\in\{1,\ldots,n\}$. A point $x\in X$ is uniquely expressed as
	$$x = \sum_{k=1}^n \lambda_k a^k.$$
	Since $x$ is in $\cal Z^<(A)$, we have $\lambda_k\in\{0,\frac{1}{4},\frac{1}{2},\frac{3}{4}\}$, for $k\in\{1,\ldots,n\}$.
	Since we are considering points in the zonotope, the points in $X$ with all $\lambda_k$'s in $\{0,\frac{1}{2}\}$ imply no $h^i$'s in the combination, hence their number is $2^{n-1}$ as in the proof of Claim~\ref{claim:2}.
	
	We need to compute the number of points $x$ of $X$ having $\lambda_k$'s also in $\{\frac{1}{4},\frac{3}{4}\}$, whose decomposition involves at least one $h^i$.
	Since $x\in\cal Z^<(A)$, its decomposition involves at most one $h^i$, hence $h^i+\tilde{x}$, where $\tilde{x}=\sum_{k=1}^n \mu_k a^k$ is a point in $X$ with all $\mu_k$'s in $\{0,\frac{1}{2}\}$, an even number of them being positive, and $\mu_i=0$. Thus, every such $x$ is obtained by setting to~$\frac34$ an odd number of $\lambda_k$'s, one coming from $h^i$ and the others from the nonzero $\mu_k$'s. The other $\lambda_k$'s are $\frac{1}{4}$.
	Hence, by Lemma~\ref{lemma:odd_and_even_sum}, their number~is
	$$\sum_{s\text{ odd}}^n \binom{n}{s}= 2^{n-1}.$$ 
	Finally, this implies $|X| = 2^{n}$.	
	\end{proof}
	\end{itemize}
	In both cases~\ref{equation:hb_thick_te-interlaces_cas1_l} and~\ref{equation:hb_thick_te-interlaces_cas2_l}, note that no point of $S$ is an integer combination of the others.
	The conclusion then comes from Lemma~\ref{lemma:finding_hilbert_basis} together with either Claim~\ref{claim:4} or Claim~\ref{claim:5}.
	\end{proof}

\subsection{Proofs of the results of Section~\ref{section:RUHT}{: triangulations}}{We first prove that the join of the triangulations of each individual cone generated by a te-brick yields a regular unimodular Hilbert triangulation.}
This section ends with a proof of Corollary~\ref{corollary:0/1_simplicial_box-integer_cones}.

Recall that the join of two triangulations $\cal T_1$ and $\cal T_2$ of two cones generated by two disjoint sets of vectors whose union is linearly independent is the triangulation $\cal T_1 * \cal T_2 = \{C_1+C_2\colon C_1\in \cal T_1,C_2\in \cal T_2\}$.
\begin{lemma}[Lemma~\ref{lemma:join_is_UHT}]\label{lemma:joint_is_UHT_full}
The join of the regular unimodular Hilbert triangulations of cones generated by disjoint mutually-tu te-bricks whose union is linearly independent is a regular unimodular Hilbert triangulation of their Minkowski sum.
\end{lemma}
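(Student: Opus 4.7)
The plan is to verify that the join $\cal T := \cal T_1 * \cdots * \cal T_s$ of the given regular unimodular Hilbert triangulations $\cal T_i$ of $C_i := \cone(A_i)$ satisfies four properties: it triangulates $C := C_1 + \cdots + C_s$, and it is Hilbert, unimodular, and regular. Each property will leverage the observation from Remark~\ref{remark:mutually-tu} that mutually-tu te-bricks are lattice orthogonal.

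First, because $A_1 \sqcup \cdots \sqcup A_s$ is linearly independent, any cone of the form $C_1' + \cdots + C_s'$ with $C_i' \in \cal T_i$ is simplicial, with generators the disjoint union of those of the $C_i'$. The intersection of two such Minkowski sums is itself the Minkowski sum of pairwise intersections, and each factor is a common face of two cones of $\cal T_i$ by the triangulation property; hence $\cal T$ triangulates $C$ and the face property passes to joins.

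For the Hilbert property, each generator of a cone in $\cal T$ is by hypothesis a Hilbert basis element of the corresponding $C_i$, and Lemma~\ref{lemma:hb_lattice_orthogonal} (\emph{i.e.}~Lemma~\ref{lemma:hb_minkowski_sum}) yields $\cal H(C) = \bigcup_i \cal H(C_i)$, so every generator of $\cal T$ lies in $\cal H(C)$. Unimodularity of a typical cone $C_1' + \cdots + C_s'$ follows from iterated application of Corollary~\ref{corollary:mutually_tu} combined with Theorem~\ref{theorem:nbr_points_zono}: writing $A_i'$ for the generators of $C_i'$, one obtains $\gcddet(A_1' \sqcup \cdots \sqcup A_s') = \prod_i \gcddet(A_i') = 1$, the last equality being unimodularity of each $C_i'$.

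Finally, regularity of $\cal T$ is inherited from the join operation under the linear independence hypothesis by~\cite[Sect.~2.3.2]{Haase_Paffenholz_Piechnik_Santos_2021}. The only conceptual step in this plan is the passage from \emph{mutually-tu} to \emph{lattice orthogonal}; once this is established via Remark~\ref{remark:mutually-tu}, all three nontrivial properties follow from the preparatory material of Section~\ref{section:hb_te-bricks} without further technical work, so the main obstacle is essentially bookkeeping rather than any deep combinatorial argument.
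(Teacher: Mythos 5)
Your plan follows the same overall route as the paper: use Remark~\ref{remark:mutually-tu} to pass from mutually-tu to lattice orthogonal, then handle the Hilbert property via Lemma~\ref{lemma:hb_lattice_orthogonal}, regularity via~\cite[Sect.~2.3.2]{Haase_Paffenholz_Piechnik_Santos_2021}, and unimodularity via a determinant product formula. The triangulation, Hilbert, and regularity parts of your argument are correct.

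However, the unimodularity step has a genuine circularity as written. You apply Corollary~\ref{corollary:mutually_tu} to the sets $A_i'$ (the generators of the subcones $C_i'$ of the triangulations) to derive the product formula $\gcddet(A_1'\sqcup\cdots\sqcup A_s')=\prod_i\gcddet(A_i')$. But Corollary~\ref{corollary:mutually_tu} takes lattice orthogonality as a hypothesis, and lattice orthogonality of $A_i'$ and $A_j'$ is by definition exactly this product formula. What Remark~\ref{remark:mutually-tu} gives you is lattice orthogonality of the original te-bricks $A_i$, not of the sets $A_i'$, which consist of nontrivial Hilbert basis elements. The missing link is the observation that, since each $A_i'$ is linearly independent and spans the same subspace as $A_i$, there is an invertible matrix $Q_i$ with $A_i'=Q_iA_i$, and then every $|A_i'|\times|A_i'|$ minor of $A_1'\sqcup\cdots\sqcup A_s'$ (respectively of $A_i'$) is $\prod_i\det(Q_i)$ (respectively $\det(Q_i)$) times the corresponding minor of $A_1\sqcup\cdots\sqcup A_s$ (respectively $A_i$). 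Hence lattice orthogonality of the $A_i$ does transfer to the $A_i'$, but this change-of-basis argument must be made explicit — and it is precisely the content of the determinant computation in the paper's proof. Once this is supplied, your proof is sound.
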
 
\begin{proof}
By Remark~\ref{remark:mutually-tu}, the te-bricks are lattice orthogonal.
Hence, by Lemma~\ref{lemma:joint_is_UHT_full}, the join of the triangulations of each individual cone generated by disjoint mututally-tu te-bricks is a Hilbert triangulation.
Let $C = \cone(A_1\sqcup \cdots\sqcup A_k) = \cone(A_1)+\cdots+\cone(A_k) = C_1+\cdots+C_k$ be the Minkowski sum of simplicial cones generated by mutually-tu te-bricks $A_1,\ldots,A_k$, with $A=A_1\sqcup\cdots \sqcup A_k$ linearly independent. 
Let $C_1'+\cdots+C_k'$ be a cone in the join, where $C_i'=\cone(B_i)$ is a unimodular cone in the triangulation of $C_i$ and $B_i\subseteq \cal H(C_i)$, for $i=1,\dots,k$.
Since $A_i$ and $B_i$ are sets of linearly independent vectors and span the same subspace, we have $B_i=  Q_i A_i$, for some square invertible matrices $Q_i$.
In particular, we have $\gcddet(B_i)=\eqdet(B_i)=|\det(C_i')|=1$.
Consequently, $B=QA$, where $B=B_1\sqcup\cdots\sqcup B_k$ and $Q$ is the block diagonal matrix whose blocks are the $Q_i$'s.
Combining the previous remarks and the fact that $A_1,\ldots,A_k$ are lattice orthogonal, we compute the determinant of the simplicial cone $C_1'+\cdots+C_k'$, whose generators are $B_1\sqcup \dots\sqcup B_k$:
\begin{align*}
\det(C_1'+\cdots+C_k')
&= \pm\gcddet(B_1\sqcup \dots\sqcup B_k)\\
&= \pm\left(\prod_i \det(Q_i)\right)\gcddet(A_1\sqcup \dots \sqcup A_k)\\
&=\pm \left(\prod_i \det(Q_i)\right)\left(\prod_i\gcddet(A_i)\right)\\
&=\pm \prod_i \det(Q_i)\eqdet(A_i)\\
&=\pm \prod_i \eqdet(B_i)\\
&=\pm 1
\end{align*}
Therefore, the join is also a unimodular triangulation.
The Minkowski sum of two simplicial cones generated by two sets of vectors whose union is linearly independent is a simplicial cone as it combinatorially corresponds to the join of two affinely independent simplices, which is again a simplex. Moreover, the join preserves regularity by~\cite[Section~2.3.2]{Haase_Paffenholz_Piechnik_Santos_2021}.
\end{proof}

Thanks to Theorem~\ref{theorem:hb_te-bricks_full} and Lemma~\ref{lemma:joint_is_UHT_full} all that remains to find is a regular unimodular Hilbert triangulation of the cones generated by each type of te-brick, to finally obtain the following.
\begin{theorem}[Theorem~\ref{theorem:triangulation_te-sets}]\label{theorem:triangulation_te-sets_l}
Let $A$ be a te-set without thick te-interlace of size greater than six.
Then, $\cone (A)$ has a regular unimodular Hilbert triangulation.
\end{theorem}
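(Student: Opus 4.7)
The plan is to follow the sketch in the excerpt and combine three ingredients: the decomposition theorem (Theorem~\ref{theorem:decomposition_te_full}), the join lemma (Lemma~\ref{lemma:joint_is_UHT_full}), and the explicit Hilbert bases of te-bricks (Theorem~\ref{theorem:hb_te-bricks_full}). First I would apply Theorem~\ref{theorem:decomposition_te_full} to write $A = U \sqcup L_1\sqcup\cdots\sqcup L_k \sqcup S_1\sqcup\cdots\sqcup S_\ell \sqcup T_1\sqcup\cdots\sqcup T_m$, where $U$ is a tu-set, the $L_i$ are te-laces, the $S_i$ are thin te-interlaces, and the $T_i$ are thick te-interlaces (all of size at most six by hypothesis). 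Since these te-bricks are pairwise mutually-tu and their union is linearly independent, Lemma~\ref{lemma:joint_is_UHT_full} says that the join of regular unimodular Hilbert triangulations of $\cone(U), \cone(L_i), \cone(S_i), \cone(T_i)$ is a regular unimodular Hilbert triangulation of $\cone(A)$. So it suffices to produce such a triangulation for each individual te-brick.

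For a tu-set $U$, the cone is itself unimodular (and equals its only Hilbert basis element-generated simplex), so the trivial one-cone triangulation works. For a te-lace $L = \{a^1,\dots,a^n\}$, the nontrivial Hilbert basis element is $h = \tfrac12\sum_j a^j$ (Theorem~\ref{theorem:hb_te-bricks_full}), and I would take the \emph{stellar triangulation at $h$}: the $n$ simplicial cones $\cone(h, a^1,\dots,\widehat{a^i},\dots,a^n)$, one for each $i$. Each such cone has generators forming a matrix whose determinant one computes to be $\pm 1$ (replacing $a^i$ by $h$ in the $\pm 1$ matrix $A$ divides its determinant by $2$, and $|\det A|=2$), so the triangulation is unimodular. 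Regularity follows because the stellar triangulation at $h$ coincides with the pulling refinement at $h$ of the trivial triangulation $\{\cone(A)\}$, and pulling preserves regularity by~\cite[Lemma~2.1]{Haase_Paffenholz_Piechnik_Santos_2021}. All the generators involved are Hilbert basis elements of $\cone(L)$.

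For a thin te-interlace $S$ of size $n$, I would adapt the construction in~\cite{DeLoera_Sturmfels_Thomas_1995} via the loop-augmented graph $\mathring K_n$ described in the sketch: encode $a^i$ by the loop $ii$ and $\tfrac12(a^i+a^j)$ by the edge $ij$, and take the cones $C_\Sigma = \cone(\{\tfrac12(a^i+a^j) : ij\in E(\Sigma)\})$ as $\Sigma$ ranges over stellar cycles of $\mathring K_n$. Under the linear transformation $(2A^{-1})^\top$, these cones correspond, after restricting to the facet $\{x_i=1\}$, to the well-known regular unimodular triangulation of the second hypersimplex $\Delta(2,n)$; the stars at the loops $ii$ attach cones $\cone(2e^i, \text{facet})$ whose unimodularity is a direct determinant computation and whose attachment preserves regularity. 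Since all generators are Hilbert basis elements of $\cone(S)$ (Theorem~\ref{theorem:hb_te-bricks_full}), this is a regular unimodular Hilbert triangulation.

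For a thick te-interlace $T$, by hypothesis $n\in\{4,6\}$, and by Theorem~\ref{theorem:hb_te-bricks_full} there are two subcases corresponding to which family of ``deep'' Hilbert basis elements (case~\ref{equation:hb_thick_te-interlaces_cas1_l} or \ref{equation:hb_thick_te-interlaces_cas2_l}) appears. In case~\ref{equation:hb_thick_te-interlaces_cas1_l}, which contains only one deep element $h=\tfrac14\sum_j a^j$, I would take the restriction of the thin-te-interlace triangulation of step~3 to the boundary of $\cone(T)$ and cone each of its facets with $h$; unimodularity follows by determinant computation (mirroring the te-lace case with a factor $4$ absorbed by the four generators). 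In case~\ref{equation:hb_thick_te-interlaces_cas2_l} with the $n$ deep elements $h^i$, I would use the more intricate construction illustrated in Figure~\ref{figure:triangulation}. The hard part, and the only step that is not essentially combinatorial, is verifying regularity for these thick-interlace triangulations: I expect to do this by exhibiting explicit lifting height functions (Polymake~\cite{Gawrilow_Joswig_2000} can check regularity as mentioned in the sketch), and to verify unimodularity by a finite determinant check for each of the finitely many cones in the triangulation (there are four configurations when $n=4$, shown in the figure, and an analogous finite list when $n=6$). This is the reason the hypothesis restricts to thick te-interlaces of size at most six: for larger $n$, Conjecture~\ref{conjecture:size_thick_te-interlaces} predicts that no thick te-interlaces exist, but this is not yet proved, so the theorem is stated under the current restriction.
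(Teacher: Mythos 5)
Your proposal follows essentially the same route as the paper's proof: decomposing via Theorem~\ref{theorem:decomposition_te_full}, reducing to individual te-bricks via Lemma~\ref{lemma:joint_is_UHT_full}, using the stellar (pulling) triangulation for te-laces, adapting the second-hypersimplex triangulation of~\cite{DeLoera_Sturmfels_Thomas_1995} for thin te-interlaces, and handling the finitely many thick te-interlaces of size $4$ and $6$ by explicit computation with Polymake. The only cosmetic difference is that you suggest the unimodularity check for case~\ref{equation:hb_thick_te-interlaces_cas1_l} might be done by hand mirroring the te-lace determinant argument, whereas the paper simply runs a determinant-checking routine for all four thick configurations; this does not change the substance.
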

\begin{proof}
Let $A=\{a^1,\dots,a^n\}$ be a te-brick and $C=\cone(A)$.
\subsubsection*{1. $A$ is a tu-set.}
The regular unimodular Hilbert triangulation is the cone itself.

\subsubsection*{2. $A$ is a te-lace.}
The stellar triangulation at $h=\frac{1}{2}\sum_j a^j$, namely the one formed by the $n$ cones generated by $h$ and $n-1$ generators among $n$, is regular since its coincides with the strong pulling at $h$ which preserved regularity~\cite[Lemma~2.1]{Haase_Paffenholz_Piechnik_Santos_2021}.
For $j\in\{1,\dots,n\}$, we have $\gcddet(\{h\}\cup \{a^i: i\neq j\})=\frac{1}{2}\eqdet(A)=1$, which yields the unimodularity.
Finally, all the cones are generated by Hilbert basis elements.

\subsubsection*{3. $A$ is a thin te-interlace.}
Inspired by the regular triangulation in~\cite{DeLoera_Sturmfels_Thomas_1995}, we start this case with some definitions.
Let $G=(V,E(G))$ be a graph.
An edge of~$G$ whose extremities coincide is called a \emph{loop}.
Let $\{e^v\}_{v\in V}$ be the canonical basis of $\R^V$.
If $ij\in E(G)$ is an edge of $G$, its \emph{characteristic vector} is $\chi^{ij}=e^i+e^j$.
The characteristic vector of a loop $ii$ is $\chi^{ii}=e^i+e^i = 2e^i$.
The \emph{incidence matrix} $A_G\in\{0,1\}^{|E(G)|\times |V|}$ is the matrix whose rows are the characteristic vectors of the edges of $G$.
A \emph{spanning} subgraph of $G$ is a connected graph $H=(V,F)$ with $F\subseteq E(G)$.

Let $\mathring{K}_n$ denote a complete graph with $n$ vertices to which we added a loop $ii$ at each vertex $i$.
Embed $\mathring{K}_n$ as a convex $n$-gon in $\R^2$, with  with clockwise labeled vertices $v_1,\dots,v_n$, edges $ij$ embedded as line segments $[v_i,v_j]$, for each  $i\neq j$, and loops $ii$ as circles outside the $n$-gon intersecting the $n$-gon only at $v_i$.
The edges of $\mathring{K}_n$ encode the Hilbert basis elements of $C$ as follows: an edge $ij$ represents $\frac{1}{2}(a^i+a^j)$. The latter is $a^i$ for a loop~$ii$.
We say that two distinct edges \emph{intersect} if the associated curves intersect.		
This happens either if they have a common extremity, or if the edges are $ik$ and $jl$ with $i<j<k<l$.
A loop \emph{intersects} an edge if they share a vertex.
A \emph{stellar cycle} of this embedding $\mathring{K}_n$ is a spanning subgraph with $n$ pairwise intersecting edges or loops.
Let $\cal S_n$ denote the set of stellar cycles of $\mathring{K}_n$.
As a loop is considered as a cycle of length~$1$, note that a stellar cycle contains a unique cycle which is odd.
Consequently, for each loop, there is precisely one stellar cycle whose unique cycle is this loop.
\begin{remark}\label{remark:edges thrackle} We mention that stellar cycles are in one-to-one correspondence with odd sets of vertices of $\mathring{K}_n$.
Indeed, given an odd number of vertices of $\mathring{K}_n$, there is a unique cycle on these vertices whose edges pairwise intersect. 
Then, all the remaining edges are uniquely determined since they have to intersect all the edges of the cycle. 
Therefore, up to permutation of rows or columns, the incidence matrix $A_S$ of a stellar cycle $S$ is a square matrix of size $n$ of the form:
$$\begin{bmatrix}
	A_D & \mathbf{0}\\
	* & I_{n-|D|}\\
\end{bmatrix},$$
for $D$ the odd cycle of $S$ ($A_D=\begin{bmatrix}2\end{bmatrix}$ if the cycle is a loop). Since $D$ is an odd cycle, its incidence matrix $A_D$ has determinant $\pm 2$, therefore $\det (A_S) = \pm 2$.
We refer the reader to~\cite{DeLoera_Sturmfels_Thomas_1995} for more details.
\end{remark}
\begin{claim}
The collection $\cal T$ of the cones $C_S=\cone(\frac{1}{2}(a^i+a^j)\colon ij\in E(S))$, for all $S\in \cal S_n$, forms a regular unimodular Hilbert triangulation of $C$.
\end{claim}
\begin{proof}
By Theorem~\ref{theorem:hb_te-bricks_full}, all the cones in $\cal T$ are generated by Hilbert basis elements.
Note that a triangulation of $P=\conv (A)$ yields a triangulation of $C$.
Let $\Delta_{2,A}=\conv\left(\frac{1}{2}(a^i+a^j)\colon i\neq j\right)$ and $T_i=\conv \left(a^i,\frac{1}{2}(a^i+a^j)\colon j\neq i\right)$, for $i\in\{1,\ldots,n\}$.
All these sets have disjoint relative interiors and we have $P=\Delta_{2,A}\cup\bigcup_{i=1}^n T_i$.
The simplex $T_i$ shares only the facet $F_i=\conv(\frac{1}{2}(a^i+a^j)\colon j\neq i)$ with $\Delta_{2,A}$, for $i\in\{1,\ldots,n\}$.
Therefore, from any triangulation of $\Delta_{2,A}$, we obtain one for $P$ by attaching the simplex $T_i$ at each facet $F_i$.

By~\cite[{Theorem~2.3 and Lemma~2.4}]{DeLoera_Sturmfels_Thomas_1995} the set of simplices
$$\left\{\conv \left(\frac{1}{2}(e^i+e^j)\colon ij\in E(S)\right)\!\!, \text{ for all loopless } S\in\cal S_n\right\}$$
forms a regular triangulation of $\Delta_2=\conv\left(\frac{1}{2}(e^i+e^j)\colon i\neq j\right)$.
Therefore, the simplices $\conv \left(\frac{1}{2}(a^i+a^j):ij\in E(S)\right)$~for all loopless $S\in\cal S_n$, form a regular triangulation of $\Delta_{2,A}$, as $\Delta_{2,A} = A^\top\Delta_{2}$ is the image of $\Delta_2$ by the linear map $A^\top$.

We now attach the $T_i$'s. The triangulation we obtain remains regular by assigning a sufficiently large weight to every $a^i$, which is separated from the other $a^j$'s by the facet $F_i$, and those facets have maximal weight among the other ones of the triangulation.
Since the cones associated with these simplices are the cones $C_S$ defined above for all loopless $S$, the collection of Hilbert cones $\cal T$ is indeed a regular triangulation of $C$.
\medskip

All that remains to show is that each cone of $\cal T$ is unimodular.
Recall that $A$ is thin and $\eqdet (A) =\pm 2^{n-1}$.
For $i\in\{1,\ldots,n\}$, the cone $C_i$ associated with the stellar cycle with loop $ii$ is generated by $a_i$ and $\frac{1}{2}(a^i+a^j)$ for $j\neq i$.
It is unimodular since:
$$\det (C_i) = \pm\eqdet \left(\frac{1}{2}\begin{bmatrix}
	\mathbf{I}_{i-1} & \mathbf{1} & \mathbf{0}\\
	\mathbf{0}^\top & 2 & \mathbf{0}^\top\\
	\mathbf{0} & \mathbf{1} & \mathbf{I}_{n-i}
\end{bmatrix}A \right) = \pm  \frac{1}{2^n} 2\eqdet (A)= \pm1.$$
\medskip
For a stellar cycle $S\in\cal S_n$, by Remark~\ref{remark:edges thrackle}, we have:
$$\det (C_S) = \pm\eqdet \left(\frac{1}{2} A_S^\top A \right) = \pm \frac{1}{2^n} \det (A_S) \eqdet (A) = \pm1,$$
as desired.
\end{proof}
\subsubsection*{4. $A$ is a thick te-interlace of size at most $6$.}
There are four cases: either $n$ equals $4$ or $6$, and either \ref{equation:hb_thick_te-interlaces_cas1} or~\ref{equation:hb_thick_te-interlaces_cas2} occurs in Theorem~\ref{theorem:hb_te-bricks_full}.
In each case, we used Polymake~\cite{Gawrilow_Joswig_2000} to generate a regular Hilbert triangulation, inspired by~\cite{DeLoera_Sturmfels_Thomas_1995}, and a simple determinant computation algorithm to check unimodularity.
In Section~\ref{section:figures}, the reader will find figures, generated from the output of our Polymake script, representing the cones of the triangulations for each of these cases.
Each figure corresponds to a set of $n$ Hilbert basis elements generating a unimodular Hilbert cone in one of the four regular unimodular Hilbert triangulations.	
More precisely, in each figure there are $n$ vertices labelled from $1$ to $n$ and colored loops and edges, which correspond to Hilbert basis elements as follows.

\medskip

In all figures:
\begin{itemize}
\item[\textbullet] $a^i$ is represented by a blue loop at vertex $i$, for $i=1,\ldots,n$,
\item[\textbullet] $\frac{1}{2} (a^i + a^j)$ is represented by a blue edge $ij$, for $i\neq j$.
\end{itemize}

In Case~\ref{equation:hb_thick_te-interlaces_cas1}, the additional Hilbert basis element is:
\begin{itemize}
\item[\textbullet] $\frac{1}{4}\sum_{j} a^j$, which is represented by a green dot in the center of the figure.
\end{itemize} 

In Case~\ref{equation:hb_thick_te-interlaces_cas2}, there are $n$ additional Hilbert basis elements, which are:
\begin{itemize}
\item[\textbullet] $\frac{3}{4}a^i+\frac{1}{4}\sum_{j\neq i} a^j$, which are each represented by a red circle around vertex $i$, for $i=1,\ldots,n$.
\end{itemize}
All the associated figures are in Section~\ref{section:figures}.
\end{proof}

Finally, we prove Corollary~\ref{corollary:0/1_simplicial_box-integer_cones}.
\begin{corollary}[Corollary~\ref{corollary:0/1_simplicial_box-integer_cones}]
Simplicial box-totally dual integral cones in the nonnegative orthant have the integer Carath\'eodory property.
\end{corollary}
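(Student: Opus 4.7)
The plan is to show that such a cone $C$ is in fact unimodular, whence the integer Carath\'eodory property follows at once. Let $G$ be the matrix whose rows are the primitive integer generators of $C$. Since $C$ is simplicial and box-TDI, $C$ is a te-cone: in the full-dimensional case this is Theorem~\ref{theorem:invert_TE} together with the characterization of box-TDI cones via equimodular face-defining matrices (the face-defining matrices of $C$ are obtained from the rows of $(G^{-1})^\top$, which is totally equimodular iff $G$ is), and the general case reduces to this by restricting to $\mathrm{span}(C)$. Hence $G$ is totally equimodular. Since totally equimodular matrices are essentially $0,\!\pm 1$ and the rows of $G$ are primitive integer vectors, each row is itself $0,\!\pm 1$. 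Finally, $C\subseteq \R^n_{\geq 0}$ forces every generator to be nonnegative, so $G$ is a $0,\!1$ matrix.

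Next, I would invoke the decomposition theorem to upgrade $G$ from totally equimodular to totally unimodular. By Theorem~\ref{theorem:decomposition_te}, $G$ is a mutually-tu disjoint union of tu-sets, te-laces, thin te-interlaces, and thick te-interlaces. In any te-lace or te-interlace $L$, all rows share a common support $S$; but a $0,\!1$ row with support exactly $S$ must equal $\mathbf{1}_S$, so any two rows of $L$ would coincide, contradicting linear independence. Thus neither te-laces nor te-interlaces can appear in the decomposition, and $G$ consists of a single tu-set, i.e., it is totally unimodular.

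It then follows that $C=\cone(G)$ is a unimodular simplicial cone, so by Theorem~\ref{theorem:hb_te-bricks} (tu-set case) its Hilbert basis coincides with the set of generators. Every integer point of $C$ is therefore a nonnegative integer combination of these $\dim(C)$ Hilbert basis elements, which is precisely the integer Carath\'eodory property. The only mildly delicate point is the first step, identifying simplicial box-TDI cones in the nonnegative orthant with te-cones whose generators are $0,\!1$; once that is in place, the rest is a short consequence of the decomposition theorem and the elementary observation that two $0,\!1$ vectors with identical support must be equal.
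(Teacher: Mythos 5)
Your first step, identifying a simplicial box-TDI cone in the nonnegative orthant with a cone generated by a $0,\!1$ te-set, is correct and matches the paper's proof. The gap is in the second step, where you assert that in any te-lace all rows share a common support. That is true for te-laces of size two (and for te-interlaces in general, so that part of your argument does legitimately rule out te-interlaces), but it fails for te-laces of size at least three. Consider the edge-vertex incidence matrix of a triangle,
\[
\begin{bmatrix} 1 & 1 & 0 \\ 0 & 1 & 1 \\ 1 & 0 & 1 \end{bmatrix}.
\]
It is a $0,\!1$ te-set (incidence matrices of graphs are always totally equimodular), it is not totally unimodular (determinant $2$), yet every proper subset of rows is a tu-set, so it is a te-lace --- and its rows have three distinct supports. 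This example also shows your ultimate conclusion is wrong: a simplicial box-TDI cone in the nonnegative orthant need not be unimodular. The cone generated by those three rows has the nontrivial Hilbert basis element $(1,1,1) = \frac{1}{2}\bigl((1,1,0)+(0,1,1)+(1,0,1)\bigr)$.

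Thus the decomposition of a $0,\!1$ te-set can contain te-laces (of size $\geq 3$), and you cannot reduce to the totally unimodular case. The paper's proof only excludes te-interlaces from the decomposition (which your support argument correctly does), and then invokes the regular unimodular Hilbert triangulation of Theorem~\ref{theorem:triangulation_te-sets_l} --- which handles tu-sets and te-laces uniformly --- to conclude the integer Carath\'eodory property. Your argument would need to be repaired along those lines: keep the observation that no $0,\!1$ te-interlaces exist, but replace the false claim that $G$ is totally unimodular with an appeal to the triangulation theorem for te-sets without thick te-interlaces.
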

\begin{proof}
As mentioned in the proof of Theorem~\ref{theorem:invert_TE}, a simplicial cone is box-TDI if and only if it is generated by a te-set.
If the cone is in the nonnegative orthant, then the te-set is $0,\!1$.
Since $0,\!1$ te-sets contain no te-interlaces, Theorem~\ref{theorem:triangulation_te-sets_l} provides a regular unimodular Hilbert triangulation of the cone, hence it has the integer Carath\'eodory property.
\end{proof}
\newpage
\subsection{Figures for the case~\ref{equation:hb_thick_te-interlace} of Theorem~\ref{theorem:hb_te-bricks}}\label{section:figures}~

\begin{center}
$n=4$, Case~\ref{equation:hb_thick_te-interlaces_cas1}:
\smallskip
\hrule
\medskip
\includegraphics[width=\linewidth]{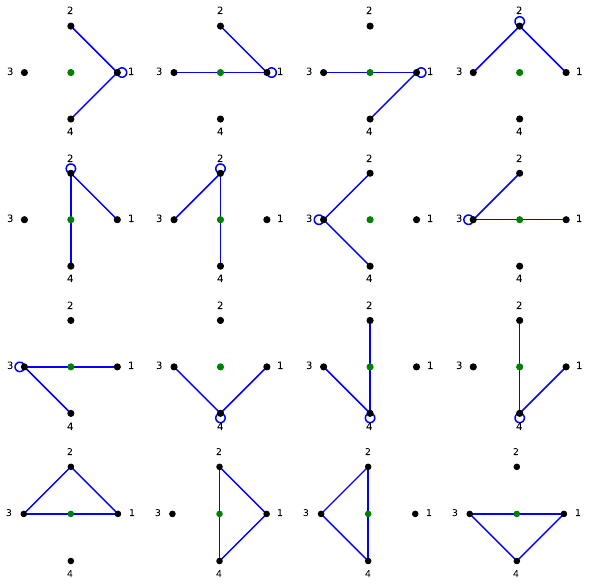}
\newpage
$n=6$, Case~\ref{equation:hb_thick_te-interlaces_cas1}:
\smallskip
\hrule	
\medskip

\includegraphics[width=0.9\linewidth]{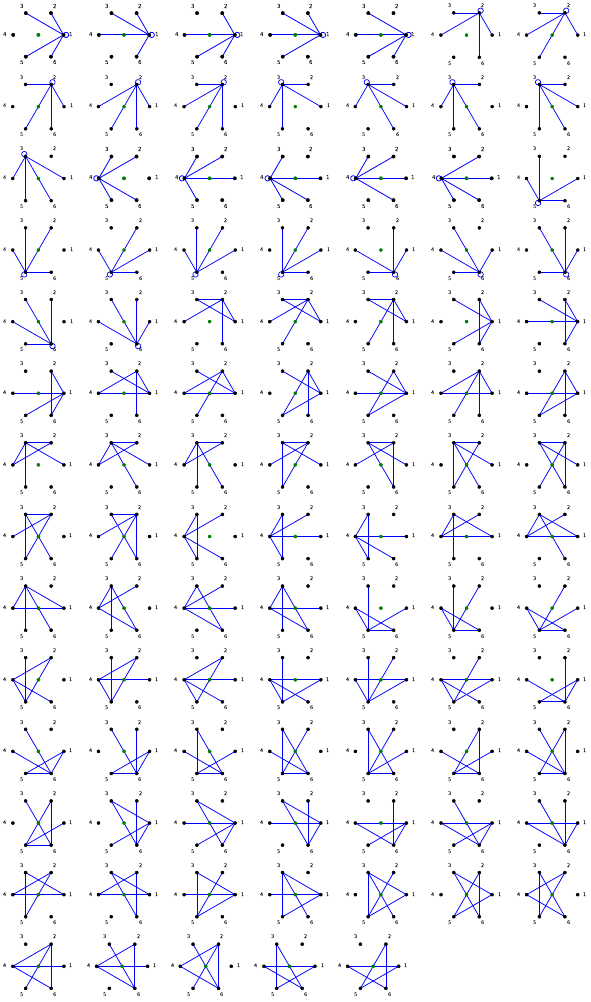}		
\newpage
$n=4$, Case~\ref{equation:hb_thick_te-interlaces_cas2}:
\smallskip
\hrule
\medskip

\includegraphics[width=\linewidth]{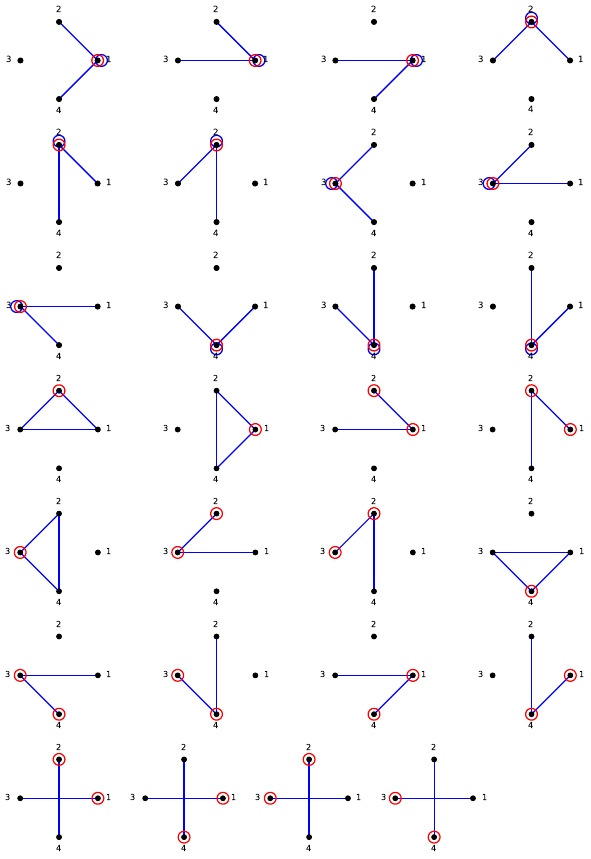}
\newpage
$n=6$, Case~\ref{equation:hb_thick_te-interlaces_cas2}:
\smallskip
\hrule
\medskip

\includegraphics[width=.9\linewidth]{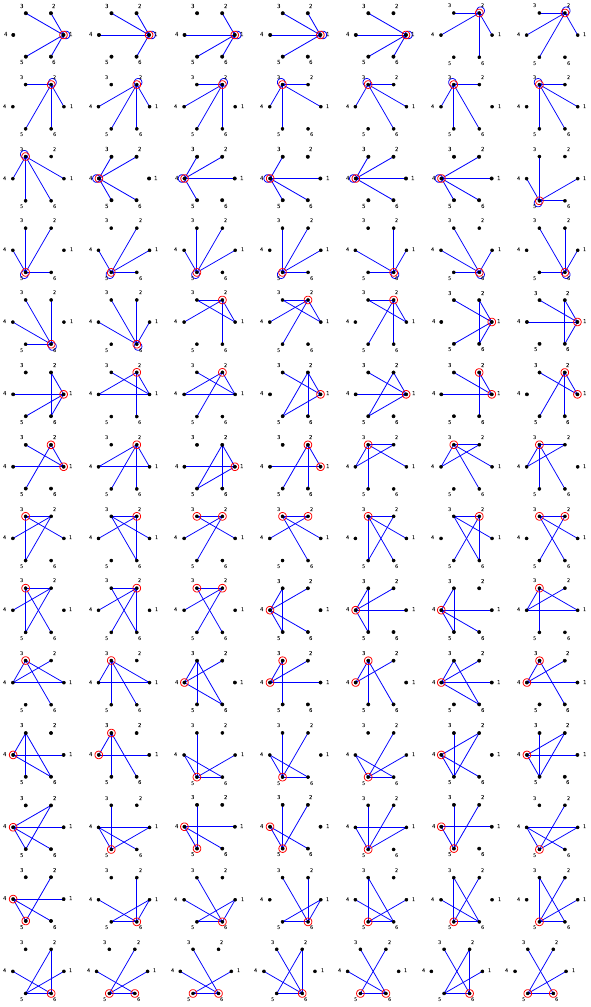}
\newpage
\includegraphics[width=.9\linewidth]{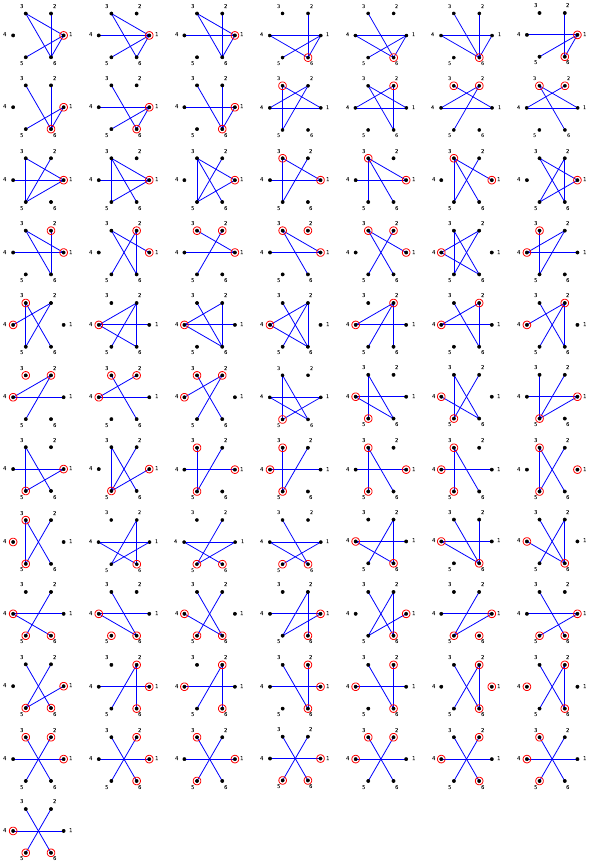}
\end{center}

\section*{Declarations}
The authors have no conflicts of interest to declare that are relevant to the content of this article.

\newpage


\end{document}